\documentclass{article} 
\usepackage{iclr2026_conference,times}
\iclrfinalcopy  


\usepackage{amsmath,amsfonts,bm}









\def\eqref#1{equation~\ref{#1}}









\def\1{\bm{1}}








\def\vtheta{{\bm{\theta}}}



\DeclareMathAlphabet{\mathsfit}{\encodingdefault}{\sfdefault}{m}{sl}
\SetMathAlphabet{\mathsfit}{bold}{\encodingdefault}{\sfdefault}{bx}{n}











\newcommand{\R}{\mathbb{R}}



\DeclareMathOperator*{\argmax}{arg\,max}
\DeclareMathOperator*{\argmin}{arg\,min}

\usepackage{amsmath,amssymb,amsfonts}
\usepackage{amsthm}
\usepackage[utf8]{inputenc} 
\usepackage[T1]{fontenc}    
\usepackage{booktabs}       
\usepackage{nicefrac}       
\usepackage[nopatch=eqnum]{microtype}      
\usepackage{xcolor}         
\usepackage{wrapfig}
\usepackage{pifont} 
\usepackage{colortbl}
\usepackage{soul}
\usepackage[small]{caption}
\usepackage{subcaption}
\usepackage{newfloat}
\usepackage{listings}
\usepackage{algorithm}
\usepackage{algorithmic}
\usepackage{enumitem}
\usepackage{lineno}
\usepackage{graphicx}
\usepackage{textcomp}
\usepackage{multirow}
\usepackage{multicol}
\usepackage{color}
\usepackage{bm}
\usepackage{arydshln}
\usepackage{makecell}
\usepackage[textsize=tiny]{todonotes}

\usepackage{url}            
\usepackage{hyperref}       

\title{Navigating Sparsities in High-Dimensional Linear Contextual Bandits}


\author{%
  Rui Zhao$^*$\\
  University of Science and Technology of China\\
  \texttt{zr1998@mail.ustc.edu.cn} \\
  \And
  Zihan Chen\thanks{indicates equal contributions, random order.} \\ 
  University of Virginia \\
  \texttt{brf3rx@virginia.edu} \\
  \AND
  Zemin Zheng \\
  University of Science and Technology of China\\
  \texttt{zhengzm@ustc.edu.cn} \\
}

%

\theoremstyle{plain}
\newtheorem{lemma}{Lemma}
\newtheorem{prop}{Proposition}

\newtheorem{theorem}{Theorem}

\newtheorem{remark}{Remark}
\newtheorem{definition}{Definition}
\newtheorem{assumption}{Assumption}
\newtheorem{example}{Example}

\def \R{\mathbb R}

\newcommand{\Ep}{\mathbb{E}}



\usepackage{array}
\newcolumntype{L}[1]{>{\raggedright\let\newline\\\arraybackslash\hspace{0pt}}m{#1}}
\newcolumntype{C}[1]{>{\centering\let\newline  \\\arraybackslash\hspace{0pt}}m{#1}}
\newcolumntype{R}[1]{>{\raggedleft\let\newline \\\arraybackslash\hspace{0pt}}m{#1}}






\newcommand{\bfXi}{\mathbf{X}^{(i)}}

\newcommand{\byi}{\boldsymbol{y}^{(i)}}

\newcommand{\bthetai}{\boldsymbol{\theta}^{(i)}}

\newcommand{\Reg}{R}

\newcommand{\Sigmai}{{\mathbf{\Sigma}^{(i)}}}

\newcommand{\ThetaScale}{\theta_{\mathrm{max}}} %

\newcommand{\polylog}{\mathrm{polylog}}

\newcommand{\bfA}{{\mathbf{A}}}

\newcommand{\hbthetai}{{\hat{\boldsymbol{\theta}}^{(i)}}}
\newcommand{\btheta}{{\boldsymbol{\theta}}}
\newcommand{\Gammati}{{\mathbf{\Gamma}_t^{(i)}}}

\newcommand{\bxiti}{{\boldsymbol{\xi}_t^{(i)}}}

\newcommand{\bxti}{{\boldsymbol{x}_t^{(i)}}}

\newcommand{\bfZti}{{\mathbf{Z}_t^{(i)}}}

\newcommand{\bbetati}{{\boldsymbol{\beta}_t^{(i)}}}

\DeclareMathOperator{\tr}{tr}          
\DeclareMathOperator{\erank}{erank}    
\newcommand{\M}[1]{\mathrm{M}\!(#1)} 


\newcommand{\halphati}{{\hat{\alpha}_t^{(i)}}}

\newcommand{\lambdati}{{\lambda_t^{(i)}}}

\makeatletter
\DeclareRobustCommand{\eqref}[1]{\textup{(\ref{#1})}}
\makeatother




\ifodd 0
\newcommand{\rzc}[1]{{\color{magenta}(Rui: #1)}}
\else
\newcommand{\rzc}[1]{}
\fi

\ifodd 0
\newcommand{\czh}[1]{{\color{cyan}(czh: #1)}}
\else
\newcommand{\czh}[1]{}
\fi

\allowdisplaybreaks

\begin{document}

\maketitle

\begin{abstract}
  High-dimensional linear contextual bandit problems remain a significant challenge due to the curse of dimensionality. Existing methods typically consider either the model parameters to be sparse or the eigenvalues of context covariance matrices to be (approximately) sparse, lacking general applicability due to the rigidity of conventional reward estimators. To overcome this limitation, a powerful pointwise estimator is introduced in this work that adaptively navigates both kinds of sparsity. Based on this pointwise estimator, a novel algorithm, termed HOPE, is proposed. Theoretical analyses demonstrate that HOPE not only achieves improved regret bounds in previously discussed homogeneous settings (i.e., considering only one type of sparsity), but also, for the first time, efficiently handles two new challenging heterogeneous settings (i.e., considering a mixture of two types of sparsity), highlighting its flexibility and generality. Experiments corroborate the superiority of HOPE over existing methods across various scenarios.
\end{abstract}

\section{Introduction}\label{sec:intro}
The contextual bandit framework has emerged as a powerful tool for decision-making applications \citep{chu2011contextual, agrawal2013thompson}, where an agent selects arms based on contextual information and receives corresponding rewards. The low-dimensional setting, where the context dimension is small compared with the time horizon, is considerably well-understood through many pioneer works \citep{abbasi2011improved,agrawal2013thompson,lattimore2020bandit, hao2020high}. The high-dimensional setting~\citep{bastani2020online,negahban2012unified,hao2020high,li2022simple, kim2019doubly,oh2021sparsity,ren2024dynamic, qian2024adaptive, cai2023doubly, han2022online, shi2023high}, where the context dimension is comparable with or even larger than the time horizon, is yet under-explored.


Especially, in high-dimensional settings, the complexity introduced by numerous contextual features poses significant challenges, commonly referred to as the curse of dimensionality and resulting in vacuous results \citep{abbasi2011improved,chu2011contextual} from low-dimensional approaches.
As this curse intuitively cannot be lifted in general scenarios, existing works mostly focused on leveraging additional structural considerations to bypass it. Two mostly considered structures are both regarding sparsity: 
(I) assuming the \textit{model parameters are sparse}, where Lasso-based algorithms have been extensively studied for identifying relevant context features, achieving sublinear regret bounds \citep{li2022simple, bastani2020online, hao2020high} and (II) assuming \textit{the covariance matrices of context distributions have (approximately) sparse eigenvalues}, where recent work by \citet{komiyama2024high} employs the 
ridgeless least-squares (RDL) estimator \citep{bartlett2020benign}, also achieving sublinear regrets in various cases. However, as shown in Fig.~\ref{fig:comparison}, these methods face limitations, as they can only handle one type of sparsity, restricting their general applicability.

This lack of flexibility in previous works originates from the rigidity of their adopted estimators, i.e., Lasso and RDL. This work introduces a powerful  PointWise Estimator (PWE) based on the recent breakthrough in \citet{zhao2023estimation}. Based on PWE, a novel algorithm, HOPE (\underline{H}igh-dimensional linear c\underline{O}ntextual bandits with \underline{P}ointwise \underline{E}stimator), is proposed. HOPE follows the explore-then-commit (ETC) scheme with PWE as the main estimator after the exploration phase. The detailed contributions of HOPE are further summarized in the following:

\begin{wrapfigure}{r}{0.55\textwidth}
\vspace{-0.15in}
\hspace{-0.1in}
		\centering
\captionsetup[sub]{skip=-1pt}
\includegraphics[width=1\linewidth]{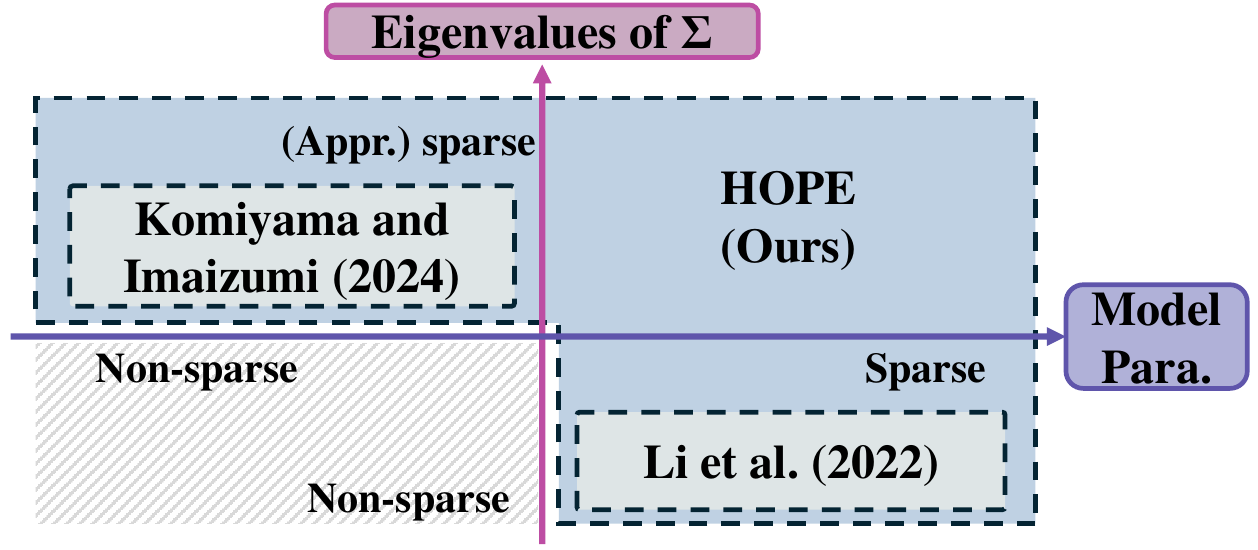}
\vspace{-0.0in}
\caption{Applicability of previous works and HOPE, where the third regent, marked gray, is in general non-solvable.}
\vspace{-0.1in}
\label{fig:comparison}
\end{wrapfigure}

$\bullet$ \textbf{\textit{Novelty.}} 
Existing high-dimensional bandit methods rely on \emph{sparsity-specific} estimators (e.g., Lasso or RDL) and can exploit only one structural assumption at a time. 
To the best of our knowledge, \textsc{HOPE} is the first bandit algorithm that is capable of adaptively navigating both types of sparsity (i.e., the model parameter and the eigenvalues of context covariance matrices) at the same time via PWE.
Building on this, we introduce and rigorously study two challenging \emph{heterogeneous} scenarios: (i) each arm exhibits both sparsity types simultaneously; (ii) different arms follow different sparsity types.

$\bullet$ \textbf{\textit{Theory.}} Comprehensive theoretical analyses have been established for HOPE, providing a thorough demonstration of its effectiveness and efficiency. One general regret guarantee is provided. Based on it, four different scenarios are further discussed. We first prove that in the two \textit{homogeneous} scenarios with one type of sparsity, HOPE matches the theory guarantees from prior work. More importantly, in the other two challenging \textit{heterogeneous} scenarios, HOPE behaves in a theoretically efficient manner while previous works fail.


$\bullet$ \textbf{\textit{Practicality.}} Our experimental results further validate the theoretical advances across all four scenarios, showcasing HOPE's flexibility and superior performance.

\section{Problem Formulation}\label{sec:formulation}
This work considers a linear contextual bandit problem involving $K$ arms and $T$ rounds, with a particular focus on high-dimensional scenarios \citep{komiyama2024high,bastani2020online,li2022simple}. 

\textbf{Contexts.} At each round $t \in [T]$, an arm context $\bxti \in \R^p$ is received for each arm $i \in [K]$. Without loss of generality, $\bxti$ is considered to be sampled from a $p$-dimensional zero-mean distribution $\mathcal{P}_i$, as in \citet{komiyama2024high}. To ease the discussion, the context distribution $\mathcal{P}_i$ is considered to be a zero-mean Gaussian one with a covariance matrix denoted as $\mathbf{\Sigma}^{(i)} = \Ep[{\bxti}({\bxti})^\top] \in \R^{p \times p}$, i.e., $\mathcal{N}(0, \mathbf{\Sigma}^{(i)})$. Note that this assumption does not restrict generality; our results can be readily extended to sub-Gaussian distributions with non-zero means by incorporating minor adjustments.

It is further assumed that for each arm $i\in [K]$, the sampling of $\bxti$ is independent across rounds, i.e., the contexts of one arm in two different rounds $t,t'$ are independent, while within the same round, the contexts of different arms, i.e.,  $\{\bxti: i\in [K]\}$, can be correlated.  

\textbf{Rewards.} Based on the arm contexts $\{\boldsymbol{X}^{(i)}_{t}: i\in [K]\}$, the agent chooses an arm $i(t) \in [K]$, and subsequently observes a reward $y^{(i(t))}_t$ that follows a linear model: $y^{(i(t))}_t = \mu_t^{(i(t))} + \varepsilon(t),$
where the expected reward $\mu_t^{(i)}$ is parameterized as $\mu_t^{(i)} := \langle\bxti, \boldsymbol{\theta}^{(i)} \rangle,$
with $\{\boldsymbol{\theta}^{(i)} \in \R^p: i \in [K]\}$ as unknown model parameters,   while $\varepsilon(t)$ captures an independent zero-mean noise with its variance denoted as $\sigma^2 > 0$. We assume that each $\bthetai$ is bounded $  \| \boldsymbol{\theta}^{(i)} \|_2  \leq \ThetaScale$.


\textbf{The Design Objective.} The optimal arm at round $t$ is defined as $i^*(t) := \argmax_{i \in [K]} \mu_t^{(i)} $.
Following the canonical MAB research \citep{Lairobbins1985,AuerCF02}, the design objective is to maximize the expected cumulative rewards of $T$ rounds, which is captured by minimizing the expected regret $\Reg(T)$ defined as $\Reg(T) := \Ep \left[ \sum\nolimits_{t=1}^T \mu_t^{(i^*(t))} - \mu_t^{(i(t))}\right].$
It is noted that the above expectation is taken with respect to the randomness of the context distributions and potentially the arm selections.

\textbf{The High-dimensional Setting.}  As mentioned in Sec.~\ref{sec:intro}, unlike the majority of works in linear (contextual) bandit \citep{chu2011contextual,abbasi2011improved,agrawal2013thompson}, where the feature dimension is (implicitly) assumed to be moderate compared with the horizon $T$, i.e., $p \ll T$ (referred to as the low-dimensional setting).
This work, instead, focuses on the high-dimensional setting \citep{komiyama2024high,li2022simple,bastani2020online, hao2020high} with the feature dimension $p$ at least on the same order of the budget $T$, i.e., $p \gtrsim T$.\footnote{With the number of unknown model parameters being $Kp$, one problem can be considered as high-dimensional if $Kp \gtrsim T$. We use the convention $p \gtrsim T$ here and in the later dicussions.}

The high-dimensional setting is widely recognized as notoriously challenging, as highlighted in Sec.~\ref{sec:intro}. With the canonical low-dimensional regret guarantees of order $\tilde{O}(\text{poly}(p) \sqrt{T})$ becoming vacuous (where $\text{poly}(\cdot)$ denoting a polynomial term of the input), the general target in the high-dimensional setting is to obtain a regret that is $p$-independent, i.e., not scaling with the feature dimension, while maintains sublinear in $T$. However, this task in general is non-achievable without further structural information, as there certainly lacks sufficient data to faithfully estimate the unknown system parameters in the worst case.

Current studies primarily focus on two types of sparse structures: one related to model parameters~\citep{li2022simple, bastani2020online, wang2018minimax, hao2020high} and the other concerning the covariances of arm contexts~\citep{komiyama2024high}.

$\bullet$ \textit{Sparsity of Model Parameters.} The model parameters, i.e., $\{\vtheta^{(i)} = [\theta^{(i)}_1, \cdots, \theta^{(i)}_p]: i\in [K]\}$, exhibit sparsity, meaning that only a few parameters are non-zero. We denote the support set for arm $i$ as $\mathcal{S}^{(i)}_0: = \{j\in [p]: \theta^{(i)}_j \neq 0\}$, with $s^{(i)}_0 := |\mathcal{S}^{(i)}_0|$. In this case, it is commonly considered that $s_0:= \max_{i\in [K]} s^{(i)}_0 \ll p$, i.e., the effective dimension is much lower than the model dimension.

$\bullet$ \textit{(Approximate) Sparsity of Context Covariance Eigenvalues.}
\footnote{We adopt the terminology in \citet{zhao2023estimation}. It describes the same spectral-sparsity phenomenon commonly discussed under the notions of eigenvalue decay and small effective rank (cf. \citet{bartlett2020benign}).}
The properties of the covariance matrices of arm contexts, i.e., $\{\boldsymbol{\Sigma}^{(i)}: i\in [K]\}$, can also be considered. One particular case is that the covariance matrix approximately exhibits sparsity in its eigenvalues, i.e., only a few eigenvalues are significantly larger than the others. With a rigorous quantification detailed in Sec.~\ref{sec:theory}, we provide two examples to illustrate this structure following \citet{komiyama2024high}. We refer to this structure as ``\textit{sparse eigenvalues of $\boldsymbol{\Sigma}$}'' in the later presentations.
\begin{example}\label{example}
    Two examples of the approximate sparsity of context covariance eigenvalues \citep{komiyama2024high}: 
    
    (A) $\lambda_k(\mathbf{\Sigma}^{(i)}) = k^{-\left(1 + 1 / T^a\right)}$ for all $k\in [p]$ when $a \in (0,1)$; 
    
    (B) let $p = O(T^c)$, $\lambda_k(\mathbf{\Sigma}^{(i)}) = k^{-b}$ for all $k\in [p]$ when $b \in (0,1)$ and $c\in(1,1/1-b)$.
    
    
\end{example}

Our work mostly follows the problem formulation in~\citet{komiyama2024high} and is motivated to provide a unified solution that can leverage these two structures in a more adaptive manner.

\begin{remark}\normalfont
    It is noted that the settings studied in previous works \citep{li2022simple, bastani2020online, wang2018minimax, hao2020high, komiyama2024high} are not identical to each other.  A detailed comparison of these settings is provided in App.~\ref{app:related}.
\end{remark}

\section{The Previously Adopted Estimators}\label{sec:previous}
The key challenge in the high-dimensional linear bandit lies in estimating effectively estimating the arm reward $\mu_t^{(i)}$ given its context $\bxti$ under the high-dimensional structure. 
Different kinds of estimators have been adopted in existing works. In this section, we provide an overview of these previously considered estimators, especially Lasso and RDL\footnote{Another commonly adopted estimator in linear bandits \citep{abbasi2011improved} is the ridge estimator \citep{hoerl1970ridge}. As its power is mostly confined to the low-dimensional setting while this work is focused on the high-dimensional setting, it is not discussed here.}, which have their advantages in certain regimes but lack general flexibility.

To facilitate the discussion, we focus on one arm $i$ as an example and consider that a dataset containing $N$ pairs of independently generated arm contexts and rewards, denoted as $\{\boldsymbol{x}^{(i)}_\tau, y^{(i)}_\tau: \tau\in [N] \}$, which can be imagined as collected from an exploration phase, e.g., following the explore-then-commit (ETC) procedure as in \citep{li2022simple,komiyama2024high} and the later proposed HOPE algorithm. For convenience, we further denote 
$\mathbf{X}^{(i)} := [\boldsymbol{x}^{(i)}_{1}, \cdots, \boldsymbol{x}^{(i)}_{N} ]^\top \in \R^{N \times p}$ and $\boldsymbol{y}^{(i)} := [y^{(i)}_{1}, \cdots, y^{(i)}_{N}] \in \R^{N}$.

\textbf{Lasso.} The Lasso estimator \citep{tibshirani1996regression} minimizes the sum of squared residuals with an $l_1$-norm regularization: $\hat{\btheta}_{\text{Lasso}}^{(i)} = \arg \min_{\btheta} \left\{ \|\boldsymbol{y}^{(i)} - \mathbf{X}^{(i)} \btheta\|_2^2 + \lambda \|\btheta\|_1 \right\},$
where $\lambda$ is the regularization parameter.
It can be observed that Lasso encourages sparsity in the estimates; thus it is adopted in for high-dimensional sparse linear bandits~\citep{li2022simple, hao2020high, bastani2020online}.

\textbf{RDL.} The ridgeless least squares (RDL) estimator \citep{bartlett2020benign} leverages benign overfitting and is given by: $\hat{\btheta}_{\text{RDL}}^{(i)} = \arg\min_{\btheta} \Big\{\|\btheta\|_2 \big| \|\boldsymbol{y}^{(i)} - \mathbf{X}^{(i)} \btheta\|_2^2 = \min_{\boldsymbol{\beta}}\|\boldsymbol{y}^{(i)} - \mathbf{X}^{(i)} \boldsymbol{\beta}\|_2^2 \Big\}= (\mathbf{X}^{(i)})^\top (\mathbf{X}^{(i)} (\mathbf{X}^{(i)})^\top)^{-1} \boldsymbol{y}^{(i)}$, \citep{komiyama2024high} adopt RDL in high-dimensional bandit problem as it leverages the approximately sparse eigenvalues of $\boldsymbol{\Sigma}^{(i)}$.

\textbf{Limitations.}
We highlight two concrete limitations: (i) \textit{No joint exploitation when structures coexist.}
When both structures are present, neither Lasso nor RDL can exploit them concurrently; each leverages at most one and leaves the other unutilized, which leads to suboptimal statistical rates and regret guarantees in such mixed-structure problems. (ii) \textit{Homogeneity assumption across arms.} Both methods are typically analyzed under a single, uniform structural assumption. They do not accommodate heterogeneous scenarios where different arms follow different structures, nor do they provide a principled mechanism to combine information across such heterogeneous arms.




\section{A Powerful Pointwise Estimator}\label{sec:PWE}
The aforementioned limitations of previously adopted estimators motivate us to introduce a recently proposed  PointWise Estimator (PWE) \citep{zhao2023estimation}, which serves as the foundation for the HOPE algorithm presented in Sec.~\ref{sec:alg}. An overview of PWE is provided in the following, illustrating its suitability for high-dimensional linear contextual bandits problems. The setting from Sec.~\ref{sec:previous} is inherited that there is an i.i.d. dataset $\{\boldsymbol{x}^{(i)}_\tau, y^{(i)}_\tau: \tau \in [2N] \}$  for arm $i$, based on which we describe the estimation process of $\mu_t^{(i)}$ with one received context $\bxti$. Here, we consider the dataset size as $2N$ to facilitate the discussion. In particular, to ensure independence between the preparation step in Sec.~\ref{subsec:pwe_support_basis} and the other steps, we split the dataset into two halves: $\{\boldsymbol{x}^{(i)}_\tau, y^{(i)}_\tau: \tau \in [N] \}$ and $\{\boldsymbol{x}^{(i)}_\tau, y^{(i)}_\tau: \tau \in [N+1, 2N] \}$.

{\subsection{Estimating the Support Set and the Initial Estimator}\label{subsec:pwe_support_basis}

With the first half of the dataset, several preparation steps are performed to facilitate further estimations. First, the support set $\mathcal{S}^{(i)}_0$ is estimated. This process can be conducted by varying variable selection techniques \citep{ fan2008sure, tibshirani1996regression,candes2018panning}, such as the standard approach of using a Lasso estimator, with more detailed in App.~\ref{app:estimate_supp}. We denote the estimated support set as  $\mathcal{S}^{(i)}_1 \subseteq [p]$ with $s^{(i)}_1 := |\mathcal{S}^{(i)}_1|$.

Then,  In this process, an initial estimator of $\bthetai$, denoted as $\hat{\boldsymbol{\theta}}^{(i)}$ is needed, which in this work are considered to be obtained via either Lasso or RDL with the second half of the dataset.

With the estimated support set $\mathcal{S}^{(i)}_1$, the arm contexts in the second half of the dataset, i.e., $\{\boldsymbol{x}^{(i)}_\tau: \tau \in [N+1, 2N] \}$, and the received $\bxti$ can be truncated to their sub-vectors with elements at positions contained in $\mathcal{S}_1^{(i)}$. We 
slightly abuse $\mathbf{X}^{(i)}$ to denote $\left[\boldsymbol{x}^{(i)}_{N+1}[\mathcal{S}^{(i)}_1], \cdots, \boldsymbol{x}^{(i)}_{2N}[\mathcal{S}^{(i)}_1]\right]^\top \in \R^{{N}\times s_1^{(i)}}$, while using 
use $\bxti$ and $\boldsymbol{\theta}^{(i)}$ to refer to the truncated $\boldsymbol{x}^{(i)}_{t}[\mathcal{S}^{(i)}_1]$ and $\boldsymbol{\theta}^{(i)}[\mathcal{S}^{(i)}_1]$.}

\subsection{Transforming the Linear Model}\label{subsec:transform}


First, we denote $\mathbf{P}_{t}^{(i)} :={\bxti} (\bxti)^{\top} /\|{\bxti}\|_2^2$ as the projection matrix on the space spanned by ${\bxti}$ and $\mathbf{Q}_{t}^{(i)}:=\mathbf{I}_{s^{(i)}_1}-\mathbf{P}_{t}^{(i)}$ as the projection matrix on the complementary space. The following relationship can be formulated $\bfXi \bthetai =\bfXi \mathbf{P}_{t}^{(i)} \bthetai+\bfXi \mathbf{Q}_{t}^{(i)} \bthetai
=\sqrt{N}  \alpha_{t}^{(i)} \boldsymbol{z}_{t}^{(i)} +\sqrt{N} \boldsymbol{\zeta}_{t}^{(i)},$
with the following definitions:
\begin{align*}
    &\alpha_{t}^{(i)}: = \frac{\|\bfXi {\bxti}\|_2}{ \sqrt{N} \|{\bxti}\|_2^{2} } (\bxti)^{\top} {\bthetai}  =\frac{\|\bfXi {\bxti}\|_2}{ \sqrt{N} \|{\bxti}\|_2^{2} }\cdot \mu_t^{(i)}  \in \mathbb{R}, \\
    &\boldsymbol{z}_{t}^{(i)}:= \frac{\bfXi {\bxti}}{\|\bfXi {\bxti}\|_2} \in \mathbb{R}^N, \quad \boldsymbol{\zeta}_{t}^{(i)}:= \frac{\bfXi \mathbf{Q}_{t}^{(i)} {\bthetai}}{\sqrt{N}} \in \mathbb{R}^N.
\end{align*}
It can be noted that estimating $\mu_t^{(i)}$ is equivalent to estimating  $\alpha_{t}^{(i)}$, as the scaling parameter can be directly computed from $(\bfXi, \bxti)$. 

Based on this relationship, we can get that
\begin{align}
    \byi&=\bfXi \bthetai+\boldsymbol{\varepsilon}^{(i)}=\sqrt{N}  \alpha_{t}^{(i)} \boldsymbol{z}_{t}^{(i)}+\sqrt{N} \boldsymbol{\zeta}^{(i)}_{t}+\boldsymbol{\varepsilon}^{(i)}. \label{eqn:transformed_eqn}
\end{align}


\begin{remark}\normalfont
    Note that this transformation allows PWE to have $N+1$ unknown parameters, instead of the $p$ dimensions in $\bthetai$, where $p\gtrsim T > N$ in the high-dimensional settings. 
\end{remark}

\subsection{Sparsifying the Nuisance Vector}\label{subsec:sparsing}
As $\boldsymbol{\zeta}^{(i)}_{t}$ is in general a non-sparse vector, Eqn.~\eqref{eqn:transformed_eqn} can be observed to have $N+1$ unknown parameters (with the target $\alpha_{t}^{(i)}$ and the other $N$ nuisances from $\boldsymbol{\zeta}^{(i)}_{t}$). However, there are only $N$ conditions from the $N$ samples in the dataset. 
To enable the estimation, we construct an invertible matrix $\boldsymbol{\Gamma}_t^{(i)} \in \R^{N\times N}$, which transforms the nuisance vector into a sparse representation. The specific construction of $\boldsymbol{\Gamma}_t^{(i)}$ is detailed in App.~\ref{app:construct_basis}.

In particular, it can be considered that $\sqrt{N} \boldsymbol{\zeta}_{t}^{(i)}=(\sqrt{N} \Gammati)((\Gammati)^{-1} \boldsymbol{\zeta}_{t}^{(i)})=\sqrt{N} \Gammati \boldsymbol{\xi}_t^{(i)},$
where $\boldsymbol{\xi}_t^{(i)} :=(\Gammati)^{-1} \boldsymbol{\zeta}_{t}^{(i)} \in \mathbb{R}^N$ is the transformed nuisance vector. With a properly chosen $\Gammati$, it can be obtained that $\bxiti$ is (approximately) sparse.

Combining with Eqn.~\ref{eqn:transformed_eqn}, it holds that $\byi=\sqrt{N} \alpha_{t}^{(i)} \boldsymbol{z}_{t}^{(i)}+\sqrt{N} \Gammati \bxiti+\boldsymbol{\varepsilon}^{(i)}=\bfZti \boldsymbol{\beta}_t^{(i)} +\boldsymbol{\varepsilon}^{(i)},$


where $\bfZti=\sqrt{N} \cdot [\boldsymbol{z}_{t}^{(i)}, \Gammati] \in \mathbb{R}^{N \times(N+1)},\;   \mathrm{and}\; \bbetati= [\alpha_t^{(i)}, (\boldsymbol{\xi}_t^{(i)})^\top]^{\top} \in \mathbb{R}^{N+1}.$

We note that $N+1$ dimensional vector $\bbetati$ is the target to be solved. Due to the sparsity in $\boldsymbol{\xi}_t^{(i)}$, although there are only $N$ conditions, it is still solvable.

\subsection{The Overall PWE Procedure}\label{subsec:overallpwe}
We consider minimizing the following Lasso objective with a regularization parameter $\lambda_t^{(i)}$:
\begin{align}\label{eqn:PWE}
    \hat{\boldsymbol{\beta}}_t^{(i)} = \argmin_{\boldsymbol{\beta} \in \R^{N+1}} \frac{1}{N}\|\byi-\bfZti \boldsymbol{\beta}\|_2^2+\lambda_t^{(i)}\|\boldsymbol{\beta}\|_1.
\end{align}
The target $\halphati$ can be further obtained as $\hat{\boldsymbol{\beta}}_t^{(i)} = [\halphati, \hat{\boldsymbol{\xi}}_t^{(i)\top}]^\top$, and finally, we have: 
\begin{align}\label{eqn:PWE_mu}
    \hat{\mu}_t^{(i)}= \halphati \cdot  \frac{\sqrt{N}\|{\bxti}\|_2^{2}}{\|\bfXi {\bxti}\|_2}.
\end{align}
The entire procedure of PWE is summarized in Alg.~\ref{alg:pwe}.

\begin{figure}[ht]
\centering
\vspace{-0.2in}
\begin{minipage}[t]{0.48\textwidth}
    \begin{algorithm}[H]
    \caption{PWE for arm $i$}\label{alg:pwe}
    \small
    \begin{algorithmic}[1]
    \REQUIRE Dataset $\{\boldsymbol{x}^{(i)}_\tau, y^{(i)}_\tau: \tau \in [2N] \}$, context $\bxti$, regularization parameter $\lambdati$
    \STATE With the first half of the dataset, obtain the estimated support set $\mathcal{S}_1^{(i)}$, the initial estimator $\hat{\boldsymbol{\theta}}^{(i)}$
    \STATE Truncate $\bfXi$ and $\bxti$ with $\mathcal{S}_1^{(i)}$
    \STATE Solve $\hat{\boldsymbol{\beta}}_t^{(i)} = [\hat{\alpha}_t^{(i)}, \hat{\boldsymbol{\xi}}_t^{(i)\top}]^{\top}$ from Eqn.~\eqref{eqn:PWE} 
    \STATE Obtain $\hat{\mu}_t^{(i)}$ from Eqn.~\eqref{eqn:PWE_mu}
    \ENSURE Esitmate $\hat{\mu}_t^{(i)}$
    \end{algorithmic}
    \end{algorithm}
\end{minipage}%
\hspace{0.2cm}
\begin{minipage}[t]{0.48\textwidth}
    \begin{algorithm}[H]
    \caption{HOPE}\label{alg:etc}
    \small
    \begin{algorithmic}[1]
    \REQUIRE Exploration length $T_0 = 2NK$.
    \STATE Explore all arms in a round-robin manner for $T_0$ rounds and obtain $\{\boldsymbol{x}^{(i)}_\tau, y^{(i)}_\tau: \tau \in [2N] \}_{i\in [K]}$
     \FOR{$t = T_0 + 1,...,T$}
        \STATE Observe $\{\bxti: i \in [K]\}$.
        \STATE Get PWE estimator $\{\hat{\mu}^{(i)}_t: i \in [K]\}$ from Alg.~\ref{alg:pwe} with datasets from the exploration phase\vspace{-0.1in}
        \STATE Choose arm $i(t) \gets \argmax_{i \in [K]}  \widehat{\mu}^{(i)}_t$
        \STATE Receive reward $y_t^{(I(t))}$
    \ENDFOR
    \end{algorithmic}
    \end{algorithm}
\end{minipage}
\end{figure}

\section{The HOPE Algorithm}\label{sec:alg}
With the PWE estimator introduced in Sec.~\ref{sec:PWE}, we propose our \underline{H}igh-dimensional linear c\underline{O}ntextual bandits with \underline{P}ointwise \underline{E}stimator algorithm, abbreviated as HOPE. This algorithm is based on the well-known Explore-then-Commit (ETC) scheme, which starts with an exploration phase and is followed by an exploitation (or known as commitment) phase. Its effectiveness in high-dimensional linear bandit problems has also been demonstrated in previous studies~\citep{hao2020high,li2022simple, komiyama2024high}.

Here, we consider that the exploration phase lasts $T_0 = 2NK < T$ rounds, where all available arms are selected by a round-robin manner (i.e., in turn) for $2N$ times. Then, after the exploration, each arm $i\in [K]$ is associated with a dataset $\{\boldsymbol{x}^{(i)}_\tau, y^{(i)}_\tau: \tau \in [2N] \}$, with a slightly abused notation $\tau$ denoting the $\tau$-th time arm $i$ being pulled. Also, since the arms are uniformly explored, these pairs are i.i.d with each other, as considered in Secs.~\ref{sec:previous} and \ref{sec:PWE}.


With these datasets, the algorithm proceeds to the exploitation phase. At each time step $t$, estimates $\{\hat{\mu}_t^{(1)}, \cdots, \hat{\mu}_t^{(K)} \}$ can be obtained based on the given arm contexts $\{\boldsymbol{x}_t^{(1)}, \cdots, \boldsymbol{x}_t^{(K)} \}$ through the PWE estimator described in Sec.~\ref{sec:PWE}. Then, the empirically optimal arm is selected as $i(t)  = \arg\max\nolimits _{k\in [K]}\hat{\mu}_t^{(k)}.$
The PWE algorithm is presented in Alg.~\ref{alg:etc}.

\begin{table*}
\small
\centering
\tabcolsep = 2pt
\caption{Regret Comparisons in Different Scenarios. \textbf{Parameters}: $K$ denotes the number of arms;
$s_0$ denotes the support size of model parameters; $p$ denotes the feature dimension; $T$ denotes the time horizon; $\alpha,a,b,$ and $c$ are example-dependent constants.}
\vspace{-0.1in}
\begin{tabular}{c|cl}
\hline
\textbf{Scenario} & \textbf{Reference} & \textbf{Regret} \\
\hline
\multirow{2}{*}{Sparse Model Param.} 
&  \citet{li2022simple} & $O (K^{\frac{1}{3}}s_0^{\frac{1}{3}}T^{\frac{2}{3}}\polylog(pT)) $ \\
&  Proposition~\ref{prop:sparse_model} & $O(K^{\frac{1}{3}} s_0^{\frac{1}{3}} T^{\frac{2}{3}}\polylog(T)) $ \\
\hline
Sparse Eigenvalues of $\boldsymbol{\Sigma}$ &\citet{komiyama2024high} & $\tilde{O}( K^{\frac{2}{3}}T^{\max \{\frac{2+a}{3}, 1-\frac{a}{2}\}})$ \\
with Example~\ref{example}(A) &  Proposition~\ref{prop:sparse_eigen_a} & $\tilde{O}(\max\{K^{\frac{1}{2}} p^{\frac{1}{2T^a}}T^{\frac{a+2}{4}},  K^{\frac{1}{3}}p^{\frac{2}{3T^a}}T^{\frac{2-a}{3}}\})$ \\
\hdashline
Sparse Eigenvalues of $\boldsymbol{\Sigma}$ &\citet{komiyama2024high} & $\tilde{O}(K^{\frac{2}{3}}T^{\frac{2}{3}+\frac{c(1-b)}{3}})$ \\
with Example~\ref{example}(B) &  Proposition~\ref{prop:sparse_eigen_b} &   $\tilde{O}\big( \min \big\{ K^{\frac{1}{2}} T^{ \frac{1}{2} + \frac{c(2-b)}{4}}, K^{\frac{1}{2}} T^{ \frac{1}{2} + \frac{3c(1-b)}{4}} \big\} \big)$\\
\hline
Both Sparsities & Proposition~\ref{prop:both} & $\tilde{O}(K^{\frac{1}{3}} M^{\frac{2}{3}} T^{ \frac{2}{3}})$\\
\hline
Mixed Sparsities  & \multirow{2}{*}{Proposition~\ref{prop:mixed}} & $\tilde{O}(\max\{K^{\frac{1}{3}}s_0^{\frac{1}{3}} T^{\frac{2}{3}}, K^{\frac{1}{2}} p^{\frac{1}{2T^\alpha}} T^{\frac{a+2}{4}},$\\
with Example~\ref{example}(A)& &  $\;\;\hspace{3em}K^{\frac{1}{3}} p^{\frac{2}{3T^\alpha}} T^{\frac{2-a}{3}}\} )$  \\
\hline
\end{tabular}
\label{tab: regret_bounds}
\vspace{-0.1in}
\end{table*}

\section{Theoretical Analysis}\label{sec:theory}
We provide a comprehensive set of theoretical results on the performance of HOPE, highlighting its effectiveness and flexibility. A summary of our results and the comparison with existing works~\citep{li2022simple, komiyama2024high} under different cases can be found in Table~\ref{tab: regret_bounds}. We first list a few assumptions in the following.

\begin{assumption} \label{aspt:problem_parameters}
There exists positive constants $c_1, c_2, c_3 $ and $ c_4$ such that for each arm $i \in [K]$, 
the covariance matrix $\mathbf{\Sigma}^{(i)}$ and the model parameter $\boldsymbol{\theta}^{(i)}$ satisfy the following conditions:
(A) $\textup{var}((\boldsymbol{\theta}^{(i)})^\top \boldsymbol{x}^{(i)}) = (\boldsymbol{\theta}^{(i)})^\top \mathbf{\Sigma}^{(i)} \boldsymbol{\theta}^{(i)} \in [c_1, c_2]$; (B) the largest eigenvalue $\lambda_1(\mathbf{\Sigma}^{(i)})\leq c_3 {p}/{\log T}$;
(C) $\|\mathbf{\Sigma}\|_F/\|\mathbf{\Sigma}\|_2 >{c_4}{\log T}$.
\end{assumption}

We note that these assumptions are either standard or moderate. Especially, Condition (A) considers that the variance of the expected reward over the context distribution is properly bounded. 
Conditions (B) and (C) are common requirements for high-dimensional covariance matrices.\footnote{The theoretical analysis is expressed in $\log N$, but we write $\log T$ for clarity. As $N$ is data-dependent and satisfies $N \le T$, the relation $\log N \le \log T$ ensures that the $\log T$ formulation subsumes the required $\log N$ bounds and avoids forward references to $N$.}  


To derive universal regret guarantees that hold for a broad class of initial estimators—irrespective of the accuracy of support estimation or the duration of the exploration phase—we introduce Assumps.~\ref{aspt:prediction_error} and~\ref{aspt:support}. These 
impose mild conditions on the initial estimator and support estimates; they are \textit{not required} for Props.~\ref{prop:sparse_model}-~\ref{prop:mixed} but they are essential for the general bound in Thm.~\ref{thm:regret}.
Under conventional regularity conditions, common estimators such as Lasso and RDL satisfy Assumps.~\ref{aspt:prediction_error} and~\ref{aspt:support} with high probability. This ensures the wide applicability of our theoretical results. A rigorous verification of these technical conditions, including their validity in typical problem settings, is provided in App.~\ref{app:add_tech}.

\begin{assumption}\label{aspt:prediction_error}
With the same $c_1$ as in Assump.~\ref{aspt:problem_parameters}, for all arm $i\in [K]$, the initial estimator $\hat{\boldsymbol{\theta}}^{(i)}$ satisfies that $
    |\hat{\boldsymbol{\theta}}^{(i) \top } \mathbf{\Sigma}^{(i)} \hat{\boldsymbol{\theta}}^{(i)}   -  \boldsymbol{\theta}^{(i)\top} \mathbf{\Sigma}^{(i)} \boldsymbol{\theta}^{(i)}   |  \leq c_1/2$.
\end{assumption}
\begin{assumption}\label{aspt:support}
For all arm $i\in [K]$, $\mathcal{S}_1^{(i)}$ satisfies that  $\mathcal{S}_0^{(i)} \subseteq \mathcal{S}_1^{(i)}$ and $|\mathcal{S}_1| \leq C_1 |\mathcal{S}_0| $.
\end{assumption}

The following theorem provides a general regret guarantee.
\begin{theorem}\label{thm:regret} 
Under Assumps.~\ref{aspt:problem_parameters}, \ref{aspt:prediction_error} and~\ref{aspt:support},
with an exploration phase lasting $T_0 = 2NK \leq T$ steps and $\lambda_N^{(i)} \asymp \sigma \sqrt{\log(N)/N}$ in Eqn.~\eqref{eqn:PWE} for all arms $i\in [K]$, the regret of the HOPE algorithm is bounded as
\begin{align*}
     &R(T) =  O\left(T_0 + G_{\mathcal{S}_1, \hat{\boldsymbol{\theta}}}(T-T_0)\polylog(T)/\sqrt{N}\right),
\end{align*}
where  $\polylog$ denotes a polynomial term in the logarithm of the input, and $G_{\mathcal{S}_1, \hat{\boldsymbol{\theta}}}$ is a parameter that depends on the choice of the initial estimators $\{\hat{\boldsymbol{\theta}}^{(i)}: i\in [K]\}$ and the support estimations $\{\mathcal{S}_1^{(i)}: i\in [K]\}$, with its formal definition provided in App.~\ref{sec:def_appendix}.
\end{theorem}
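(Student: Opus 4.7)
The plan is to split the cumulative regret into an exploration contribution from the first $T_0 = 2NK$ rounds and an exploitation contribution from the remaining $T - T_0$ rounds. For exploration, since each round pulls a fixed arm whose reward is $\langle \bxti, \bthetai\rangle + \varepsilon(t)$ with a Gaussian context and a bounded parameter $\|\bthetai\|_2 \leq \ThetaScale$, the instantaneous expected regret is uniformly bounded by a constant controlled through Assumption~\ref{aspt:problem_parameters}(A); summing gives the $O(T_0)$ term. For exploitation I would use the standard arm-selection inequality $\mu_t^{(i^*(t))} - \mu_t^{(i(t))} \leq 2 \max_{i \in [K]} |\hat{\mu}_t^{(i)} - \mu_t^{(i)}|$, which reduces the remaining task to a uniform pointwise bound on the PWE error.

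The main technical step is to prove, for any fixed arm $i$ and round $t$, a high-probability bound of the form $|\hat{\mu}_t^{(i)} - \mu_t^{(i)}| = O\bigl(G_{\mathcal{S}_1, \hat{\boldsymbol{\theta}}}\, \polylog(T) / \sqrt{N}\bigr)$. I would establish this by invoking the PWE analysis of \citet{zhao2023estimation} on the Lasso program in Eqn.~\eqref{eqn:PWE}, after verifying its prerequisites in the bandit setting: (a) Gaussianity of contexts together with Assumption~\ref{aspt:problem_parameters}(B,C) delivers a restricted eigenvalue/compatibility condition for the transformed design $\bfZti$; (b) the choice $\lambdati \asymp \sigma\sqrt{\log N / N}$ dominates $\|(\bfZti)^\top \boldsymbol{\varepsilon}^{(i)}\|_\infty / N$ via sub-Gaussian concentration with a union bound over the $N+1$ coordinates; and (c) Assumptions~\ref{aspt:prediction_error} and \ref{aspt:support} on the initial estimator $\hthetai$ and the support estimate $\mathcal{S}_1^{(i)}$ control the effective sparsity and $\ell_1$-mass of the nuisance vector $\bxiti$ after the $\Gammati$-sparsification of App.~\ref{app:construct_basis}. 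These three ingredients together yield the Lasso estimation error for $\bbetati$, whose first coordinate is $\halphati$, and the deterministic map in Eqn.~\eqref{eqn:PWE_mu} converts this into $\hat{\mu}_t^{(i)}$ up to the multiplicative scaling $\sqrt{N}\|\bxti\|_2^2 / \|\bfXi \bxti\|_2$, which concentrates around a bounded constant via Gaussian chi-squared tail controls and Assumption~\ref{aspt:problem_parameters}(C).

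To finish, I would union-bound the event above over all $i \in [K]$ and all $t \in [T_0+1, T]$, at the cost of only an extra factor polynomial in $\log(KT)$ absorbed into $\polylog(T)$, and multiply the per-round error by $(T - T_0)$. The transition from a high-probability bound to expected regret is routine: on the failure event the instantaneous regret is bounded by the deterministic cap implied by $\|\bthetai\|_2 \leq \ThetaScale$ and Assumption~\ref{aspt:problem_parameters}(A), and choosing the confidence level as $1/T$ makes this a lower-order additive contribution. The hardest step is (c): rigorously translating the joint quality of the initial estimator and support estimate into a tight bound on $\|\bxiti\|_1$ and hence into the precise functional form of $G_{\mathcal{S}_1, \hat{\boldsymbol{\theta}}}$. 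This is exactly the abstraction that later permits the single bound of Thm.~\ref{thm:regret} to specialize, via different instantiations of $(\hthetai, \mathcal{S}_1^{(i)})$, to the sparse-parameter, sparse-spectrum, joint-sparsity, and mixed-sparsity regimes summarized in Table~\ref{tab: regret_bounds}; the remaining Lasso analysis is standard once the correct $G_{\mathcal{S}_1, \hat{\boldsymbol{\theta}}}$ has been identified.
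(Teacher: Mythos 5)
Your proposal follows essentially the same route as the paper: the same exploration/exploitation split with the exploration term bounded by a constant per round via Assumption~\ref{aspt:problem_parameters}(A), the same arm-selection inequality reducing exploitation regret to a uniform PWE error bound, and the same key technical ingredient — a high-probability pointwise concentration for $|\hat{\mu}_t^{(i)}-\mu_t^{(i)}|$ obtained by adapting the analysis of \citet{zhao2023estimation} under Assumptions~\ref{aspt:prediction_error} and~\ref{aspt:support} (this is exactly the paper's Prop.~\ref{prop:mu_bound_our}). The only cosmetic difference is that the paper bounds the expected per-round regret directly (failure probability $O(K/N)$ per round) rather than union-bounding over all $t$, which changes nothing substantive.
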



The first term arises from exploration and the second from exploitation.  This theorem is general in the sense that it is not restricted to any specific kinds of sparsity as in previous works. 
Moreover, it characterizes the performance under different choices of the initial estimator and the support estimation (as long as Assumps.~\ref{aspt:prediction_error} and \ref{aspt:support} can be satisfied). Crucially, the proof relies on a new concentration bound for the PWE prediction error—not available in in \citet{zhao2023estimation}—which we establish in Prop.~\ref{prop:mu_bound_our}. 
This yields a nonasymptotic guarantee for PWE in prediction settings.

{To achieve further optimized performances, the choice of $N$ needs to be specified based on different scenarios.} We then provide discussions under four scenarios: (1) with sparse model parameters; (2) with (approximately) sparse eigenvalues of $\boldsymbol{\Sigma}$; (3) with both kinds of sparsity; (4) with different kinds of sparsity for different arms. The first two \textit{homogeneous} ones have been the focus of previous works, while the latter two  are more challenging in their \textit{heterogeneous} nature, which are studied for \textbf{the first time} by this work to the best of our knowledge.

\subsection{Scenario 1: Sparse Model Parameters}\label{sec:sparse_only}
First, in the parameter-sparse regime introduced in Sec.~\ref{sec:formulation}, \textsc{HOPE} attains regret guarantees that match the best-known results in the literature, thereby showing that our general framework subsumes the classical setting without loss in rate.

\begin{prop}[Sparse Model Parameters]\label{prop:sparse_model}
   With Lasso as both the initial estimator and the support estimation, using $N \asymp  K^{-2/3} s_0^{1/3} T^{2/3}$, under Assump.~\ref{aspt:problem_parameters} and the conditions in App.~\ref{appendix:lasso_prediction_error}, \ref{appendix:lasso_variable_selection}  for the guarantee of Lasso, the regret of HOPE is bounded as 
   \begin{align*}
       R(T)= O\big(K^{\frac{1}{3}} s_0^{\frac{1}{3}} T^{\frac{2}{3}}\polylog(T)\big).
   \end{align*}
   \vspace{-0.3in}
\end{prop}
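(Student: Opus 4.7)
The plan is to apply Theorem~\ref{thm:regret} with Lasso serving both as the initial estimator $\hat{\boldsymbol{\theta}}^{(i)}$ and as the support selector $\mathcal{S}_1^{(i)}$, and then tune the exploration length $N$ to balance the exploration cost against the exploitation regret. First I would verify Assumptions~\ref{aspt:prediction_error} and~\ref{aspt:support} for Lasso. Because the $T_0=2NK$ rounds of round-robin exploration yield i.i.d.\ Gaussian samples $\{\boldsymbol{x}^{(i)}_\tau, y^{(i)}_\tau : \tau\in[2N]\}$ for every arm, one can invoke the standard prediction-error and sure-screening guarantees cited in Apps.~\ref{appendix:lasso_prediction_error} and~\ref{appendix:lasso_variable_selection}: with high probability $|\hat{\boldsymbol{\theta}}^{(i)\top}\mathbf{\Sigma}^{(i)}\hat{\boldsymbol{\theta}}^{(i)} - \boldsymbol{\theta}^{(i)\top}\mathbf{\Sigma}^{(i)}\boldsymbol{\theta}^{(i)}|\leq c_1/2$ and $\mathcal{S}^{(i)}_0\subseteq\mathcal{S}^{(i)}_1$ with $|\mathcal{S}^{(i)}_1|\leq C_1 s_0$, provided $N$ exceeds a polylogarithmic-in-$p$ threshold depending on $s_0$, which is comfortably met by the choice $N\asymp K^{-2/3}s_0^{1/3}T^{2/3}$ in the high-dimensional regime under Assumption~\ref{aspt:problem_parameters}.

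Second, I would specialize the general parameter $G_{\mathcal{S}_1,\hat{\boldsymbol{\theta}}}$ in Theorem~\ref{thm:regret} to this sparse-parameter regime. Thanks to Assumption~\ref{aspt:support}, the truncation step in Sec.~\ref{subsec:pwe_support_basis} reduces the effective dimension from $p$ down to $s_1^{(i)}\leq C_1 s_0$, so PWE effectively operates on an $s_0$-dimensional regression problem whose transformed representation in Sec.~\ref{subsec:sparsing} admits a sparse nuisance vector $\boldsymbol{\xi}_t^{(i)}$. Feeding the PWE prediction-error concentration of Prop.~\ref{prop:mu_bound_our} into the definition of $G_{\mathcal{S}_1,\hat{\boldsymbol{\theta}}}$ (given in App.~\ref{sec:def_appendix}), I would show that $G_{\mathcal{S}_1,\hat{\boldsymbol{\theta}}}=O\bigl(\sqrt{s_0}\,\polylog(T)\bigr)$, the key point being that no $p$-dependence survives once the support has been correctly screened.

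Third, plugging $T_0=2NK$ and the above bound on $G_{\mathcal{S}_1,\hat{\boldsymbol{\theta}}}$ into Theorem~\ref{thm:regret} yields
\begin{equation*}
R(T) = O\Bigl(NK + \sqrt{s_0}\,T\,\polylog(T)/\sqrt{N}\Bigr).
\end{equation*}
Balancing the exploration term $NK$ against the exploitation term $\sqrt{s_0}T/\sqrt{N}$ via $N^{3/2}\asymp \sqrt{s_0}\,T/K$ gives $N\asymp K^{-2/3}s_0^{1/3}T^{2/3}$, as prescribed in the statement, and substituting back produces the claimed rate $O\bigl(K^{1/3}s_0^{1/3}T^{2/3}\polylog(T)\bigr)$.

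The main obstacle I anticipate is not the optimization over $N$, which is routine, but rather establishing the clean $\sqrt{s_0}$ scaling of $G_{\mathcal{S}_1,\hat{\boldsymbol{\theta}}}$. This requires tracing how the Lasso screening affects the transformed design $\mathbf{Z}_t^{(i)}$ and the construction of $\boldsymbol{\Gamma}_t^{(i)}$ in Sec.~\ref{subsec:sparsing}: one must argue that the restricted-eigenvalue and compatibility conditions needed for the Lasso step in Eqn.~\eqref{eqn:PWE} transfer from the untruncated problem to the truncated one, and that the resulting sparsity of $\boldsymbol{\xi}_t^{(i)}$ does not blow up with $p$. Once that technical point is in place, combined with Assumption~\ref{aspt:problem_parameters}(B)-(C) to control the spectrum of the truncated covariance, the remainder of the argument is a direct instantiation of Theorem~\ref{thm:regret}.
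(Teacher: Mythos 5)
Your proposal follows essentially the same route as the paper's proof: verify Assumptions~\ref{aspt:prediction_error} and~\ref{aspt:support} via the Lasso guarantees of Props.~\ref{prop:lasso_prediction_error}--\ref{prop:lasso_support}, bound $G_{\mathcal{S}_1,\hat{\boldsymbol{\theta}}} = O(\sqrt{s_0}\,\polylog(T))$ by combining $M_{\mathcal{S}_1^{(i)}}^{(i)}\tilde{H}_{\min}^{(i)} = O(\sqrt{N})$ with $d(\hat{\boldsymbol{\theta}}^{(i)},\boldsymbol{\theta}^{(i)}) = O(\sqrt{s_0\log p/N})$, and balance $NK$ against $\sqrt{s_0}\,T/\sqrt{N}$ to get $N\asymp K^{-2/3}s_0^{1/3}T^{2/3}$. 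The technical point you flag (the $\sqrt{s_0}$ scaling of $G$ surviving truncation) is exactly where the paper's argument rests on the asserted bounds $M_{\mathcal{S}_1^{(i)}}^{(i)}=O(\sqrt{s_0})$ and $M_{\mathcal{S}_1^{(i)}}^{(i)}\tilde{H}_{\min}^{(i)}=O(\sqrt{N})$, so your account matches the paper's proof.
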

Prior work on high-dimensional sparse linear contextual bandits reports regret bounds under varying assumptions and settings  (see App.~\ref{app:related}). 
The most directly comparable result is provided by  \citet{li2022simple}, which achieves $O\!\big(K^{1/3}s_0^{1/3}T^{2/3}\polylog(pT)\big)$.

\begin{remark}
In this scenario, the exploration length $T_0=NK$ is chosen with knowledge of $s_0$, a common assumption in high-dimensional sparse bandits \citep{bastani2020online,hao2020high,li2022simple,wang2018minimax,lee2024lasso}. 
Importantly, \textsc{HOPE} also admits \emph{sparsity-agnostic} tuning. 
For Scenario~1, setting $N \asymp K^{-2/3}T^{2/3}$ (independent of $s_0$) yields a regret of order $\tilde{O}\!\big(K^{1/3}s_0^{1/2}T^{2/3}\big)$—incurring only a minor $s_0^{1/6}$ overhead relative to the $s_0^{1/3}$ rate—while preserving sublinear regret. 
Analogous agnostic choices apply to the other scenarios; see App.~\ref{app:parameter_awareness} for details.
\end{remark}

\subsection{Scenario 2: (Approximately) Sparse Eigenvalues of Context Covariance Matrices}\label{sec:sparse_eigen}
We next consider the scenario where the context covariance matrices have (approximately) sparse eigenvalues. 
For concreteness, the results of HOPE under Example~\ref{example}(A) are stated below; the corresponding result for Example~\ref{example}(B) is deferred to App.~\ref{appendix:extra-props}. 


\begin{prop}[Sparse Eigenvalues of $\boldsymbol{\Sigma}$: Example~\ref{example}(A)]\label{prop:sparse_eigen_a}
    With RDL as the initial estimator and  
   $\mathcal{S}_1^{(i)} = [p]$ for all arms, using $N \asymp \max\{K^{-\frac{1}{2}} p^{\frac{1}{2T^a}}T^{\frac{a+2}{4}},  K^{-\frac{2}{3}}p^{\frac{2}{3T^a}}T^{\frac{2-a}{3}}\}$, 
   under Assump.~\ref{aspt:problem_parameters} and the conditions in App.~\ref{appendix:RDL_prediction_error} for the guarantee of RDL, if the covariance matrices satisfy Example~\ref{example}(A), the regret of HOPE is bounded as
   \begin{align*}
       R(T)= \tilde{O}\big(\max\big\{K^{\frac{1}{2}} p^{\frac{1}{2T^a}}T^{\frac{a+2}{4}},  K^{\frac{1}{3}}p^{\frac{2}{3T^a}}T^{\frac{2-a}{3}}\big\}\big).
   \end{align*}
\end{prop}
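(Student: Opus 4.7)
The plan is to specialize the general regret bound of Theorem~\ref{thm:regret} to the choices $\hat{\boldsymbol{\theta}}^{(i)} = \hat{\boldsymbol{\theta}}_{\text{RDL}}^{(i)}$ and $\mathcal{S}_1^{(i)} = [p]$, and then balance the exploration cost against the exploitation cost by optimizing $N$. Under these choices, Assumption~\ref{aspt:support} holds trivially because $\mathcal{S}_1^{(i)} = [p] \supseteq \mathcal{S}_0^{(i)}$ (with the cardinality constant absorbed into later bounds), so the only structural condition that requires verification is Assumption~\ref{aspt:prediction_error}. For the decay $\lambda_k(\mathbf{\Sigma}^{(i)}) = k^{-(1 + 1/T^a)}$ the tail sums satisfy $\sum_{k>r}\lambda_k \asymp r^{-1/T^a}$, and the RDL concentration results cited in App.~\ref{appendix:RDL_prediction_error} (in the spirit of \citet{bartlett2020benign}) give, with high probability, a deviation of order $c_1/2$ in the quadratic form $|\hat{\boldsymbol{\theta}}^{(i)\top}\mathbf{\Sigma}^{(i)}\hat{\boldsymbol{\theta}}^{(i)} - \boldsymbol{\theta}^{(i)\top}\mathbf{\Sigma}^{(i)}\boldsymbol{\theta}^{(i)}|$ once $N$ exceeds the scale we will ultimately choose. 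This discharges the hypotheses of Theorem~\ref{thm:regret}.

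The core technical step is to translate the polynomial decay into an explicit bound on $G_{\mathcal{S}_1,\hat{\boldsymbol{\theta}}}$. From the PWE construction in Sec.~\ref{sec:PWE}, $G$ controls the PWE prediction error of $\hat{\mu}_t^{(i)}$ and naturally decomposes into two pieces: one arising from how well the nuisance vector $\boldsymbol{\zeta}_t^{(i)}$ is sparsified via $\boldsymbol{\Gamma}_t^{(i)}$ and recovered by the Lasso program \eqref{eqn:PWE}, and another arising from the initial-estimator accuracy $\hat{\boldsymbol{\theta}}^{(i)} - \boldsymbol{\theta}^{(i)}$ as it is transmitted through the projection matrices $\mathbf{P}_t^{(i)}, \mathbf{Q}_t^{(i)}$ introduced in Sec.~\ref{subsec:transform}. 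Under Example~\ref{example}(A), the effective rank of $\mathbf{\Sigma}^{(i)}$ scales as $p^{1/T^a}$, so both pieces are controlled by factors of the form $p^{c/T^a}\polylog(T)$ with small constants $c$. Propagating these factors through the PWE prediction-error concentration established in Prop.~\ref{prop:mu_bound_our}, and plugging the result into Theorem~\ref{thm:regret} with $T_0 = 2NK$, yields a regret of the form $R(T) = O\bigl(NK + (G_a + G_b)\, T\polylog(T)/\sqrt{N}\bigr)$ for two explicit quantities $G_a, G_b$ depending on $p$, $T$, $K$, and $a$.

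The final step is a routine balancing. Setting the exploration cost $NK$ equal to each exploitation contribution separately produces two candidate values, $N_a \asymp K^{-1/2} p^{1/(2T^a)} T^{(a+2)/4}$ and $N_b \asymp K^{-2/3} p^{2/(3T^a)} T^{(2-a)/3}$; taking $N$ as the maximum of the two ensures both balance conditions are met simultaneously, and substituting back gives exactly $\tilde{O}\bigl(\max\{K^{1/2} p^{1/(2T^a)} T^{(a+2)/4},\, K^{1/3} p^{2/(3T^a)} T^{(2-a)/3}\}\bigr)$. The main obstacle I anticipate is the second paragraph: obtaining clean scaling of $G_a$ and $G_b$ under the polynomial decay. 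This requires carefully tracking how the tail sums of $\lambda_k(\mathbf{\Sigma}^{(i)})$ enter (i) the compatibility/restricted-eigenvalue condition for the transformed design $\mathbf{Z}_t^{(i)}$ in \eqref{eqn:PWE}, and (ii) the sparsity-in-the-transformed-basis of $\boldsymbol{\xi}_t^{(i)} = (\boldsymbol{\Gamma}_t^{(i)})^{-1}\boldsymbol{\zeta}_t^{(i)}$ from Sec.~\ref{subsec:sparsing}; once these bounds are secured, the remaining algebraic optimization of $N$ is elementary.
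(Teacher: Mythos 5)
Your proposal is correct and follows essentially the same route as the paper: invoke the general bound of Theorem~\ref{thm:regret} with the RDL prediction-error guarantee (App.~\ref{appendix:RDL_prediction_error}) substituted into $G_{\mathcal{S}_1,\hat{\boldsymbol{\theta}}}$, then balance $T_0=2NK$ against the two exploitation terms and take the larger candidate for $N$. One small correction of attribution: in the paper the two branches of the final $\max$ both come from the single quantity $\tilde{H}^{(i)}_{\min}M^{(i)}_{\mathcal{S}_1}d(\hat{\boldsymbol{\theta}}^{(i)},\boldsymbol{\theta}^{(i)})$ via the two components of the RDL error $d=O(\sqrt{T^a/N+T^{-a}})$ (its variance and bias terms), rather than from separate "sparsification" versus "initial-estimator" contributions as you describe, though this does not change the algebra or the resulting bound.
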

Under the same setting as Ex.~\ref{example}(A), \citet{komiyama2024high} obtain a regret rate $\tilde{O}\!\big(K^{2/3}T^{\max\{(2+a)/3,\;1-a/2\}}\big)$. 
In our bound, the factors $p^{1/(2T^{a})}$ and $p^{2/(3T^{a})}$ are subpolynomial in $p$; indeed, for fixed $a>0$ and any constant $c>0$, $p^{c/T^{a}}\!\to 1$ as $T\to\infty$. 
Under the mild growth condition $\log p \le \tfrac{1}{2}T^{a}\log T$, we further have $p^{c/T^{a}}\le T^{c/2}$, so the regret of \textsc{HOPE} is effectively polynomial only in $K$ and $T$, and improves on the above rate. The comparison under Example~\ref{example}(B) is analogous and can be found in App.~\ref{appendix:extra-props}.

\subsection{Scenario 3: Both Sparsities}~\label{sec:sparse arms and eigen}
We consider a scenario absent from prior work in which \emph{both} sources of structure are present: the model parameters $\{\boldsymbol{\theta}^{(i)}\}_{i=1}^K$ are sparse and the context covariances $\{\boldsymbol{\Sigma}^{(i)}\}_{i=1}^K$ have (approximately) sparse eigenvalues. 
For any positive semidefinite (PSD) matrix $A$, define $\M{\mathbf{A}}:=\tr(\mathbf{A})/\|\mathbf{A}\|_F$, so that $\M{\mathbf{A}}^2=\erank(\mathbf{A})$ (effective rank). 
Let $M_i:=\M{\mathbf{\Sigma}_i[\mathcal{S}_1^{(i)}]}$ and $M:=\max_{i\in[K]} M_i$.
Intuitively, $M$ measures spectral complexity on the learned support—small when only a few eigenvalues carry most of the mass.

\begin{prop}[Both Sparsities]\label{prop:both}
With Lasso as the initial estimator and also Lasso to perform the support estimation, using $N\asymp K^{-2/3} M^{2/3} T^{2/3}$, under Assump.~\ref{aspt:problem_parameters} and the conditions in App.~\ref{appendix:lasso_prediction_error}, \ref{appendix:lasso_variable_selection} for the guarantees of Lasso,  if the eigenvalues of covariance matrices $\boldsymbol{\Sigma}^{(i)}$ for all $i\in [K]$ decay sufficiently fast (e.g., Example~\ref{example}; see App.~\ref{sec:both_appendix_proof} for details), the regret of HOPE is bounded as 
\begin{align*}
    R(T) =  \tilde{O}\big(K^{\frac{1}{3}} M^{\frac{2}{3}} T^{ \frac{2}{3}}\big). 
\end{align*}
\end{prop}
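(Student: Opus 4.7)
The plan is to specialize the general regret bound of Theorem~\ref{thm:regret} to the joint-sparsity regime, with Lasso used both as the initial estimator and as the support selector, and then to pick $N$ to balance exploration and exploitation. First I would verify that the prerequisites of Theorem~\ref{thm:regret} are met. Under the stated sparsity $\|\boldsymbol{\theta}^{(i)}\|_0\le s_0$ and the cited Lasso regularity conditions, standard Lasso variable-selection guarantees imply that, for $N\gtrsim s_0\log p$, with high probability $\mathcal{S}_0^{(i)}\subseteq\mathcal{S}_1^{(i)}$ with $|\mathcal{S}_1^{(i)}|\lesssim s_0$ (Assumption~\ref{aspt:support}), and the Lasso estimator $\hat{\boldsymbol{\theta}}^{(i)}$ satisfies the prediction-error control in Assumption~\ref{aspt:prediction_error}. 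Theorem~\ref{thm:regret} then gives $R(T)=O\bigl(T_0+G_{\mathcal{S}_1,\hat{\boldsymbol{\theta}}}(T-T_0)\polylog(T)/\sqrt{N}\bigr)$ with $T_0=2NK$.

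The core step is to prove $G_{\mathcal{S}_1,\hat{\boldsymbol{\theta}}}=\tilde{O}(M)$. Because $\mathcal{S}_0^{(i)}\subseteq\mathcal{S}_1^{(i)}$, the truncated PWE pipeline from Section~\ref{sec:PWE} operates on an equivalent linear model whose ambient covariance is $\boldsymbol{\Sigma}^{(i)}[\mathcal{S}_1^{(i)}]$; in particular the nuisance vector $\boldsymbol{\zeta}_t^{(i)}$ and its sparsified form $\boldsymbol{\xi}_t^{(i)}=(\boldsymbol{\Gamma}_t^{(i)})^{-1}\boldsymbol{\zeta}_t^{(i)}$ inherit their effective dimension from the restricted spectrum. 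Under the eigenvalue-decay condition placed on each $\boldsymbol{\Sigma}^{(i)}$ (and hence on $\boldsymbol{\Sigma}^{(i)}[\mathcal{S}_1^{(i)}]$), one constructs $\boldsymbol{\Gamma}_t^{(i)}$ as in Section~\ref{subsec:sparsing} so that $\boldsymbol{\xi}_t^{(i)}$ is approximately sparse with sparsity level polylogarithmic in $T$ and with residual tail controlled by $\erank(\boldsymbol{\Sigma}^{(i)}[\mathcal{S}_1^{(i)}])=\M{\boldsymbol{\Sigma}^{(i)}[\mathcal{S}_1^{(i)}]}^2\le M^2$. Plugging this sparsity level into the new PWE prediction-error concentration bound then yields a per-round exploitation error of order $\tilde{O}(M/\sqrt{N})$, i.e.\ $G_{\mathcal{S}_1,\hat{\boldsymbol{\theta}}}=\tilde{O}(M)$.

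Combining the two pieces, the total regret is $\tilde{O}(NK+MT/\sqrt{N})$. Minimizing in $N$ yields $N\asymp(MT/K)^{2/3}=K^{-2/3}M^{2/3}T^{2/3}$ and substituting back gives the advertised $R(T)=\tilde{O}(K^{1/3}M^{2/3}T^{2/3})$. This optimal $N$ comfortably exceeds the $s_0\log p$ threshold needed for Lasso consistency in the standard high-dimensional regime, so no additional restriction on the exploration length is required.

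The main obstacle I anticipate is the middle step: translating the spectral decay of $\boldsymbol{\Sigma}^{(i)}[\mathcal{S}_1^{(i)}]$ into the approximate sparsity of $\boldsymbol{\xi}_t^{(i)}$ while simultaneously exploiting the model-parameter sparsity to shrink the ambient problem. Concretely I would need to establish a restricted-eigenvalue-type condition on the transformed design $\mathbf{Z}_t^{(i)}$ whose constants depend on $M$ rather than on $s_0$, and to control the concentration of the restricted sample covariance $(\mathbf{X}^{(i)})^{\top}\mathbf{X}^{(i)}/N$ around $\boldsymbol{\Sigma}^{(i)}[\mathcal{S}_1^{(i)}]$ uniformly over the data-dependent choice of $\boldsymbol{\Gamma}_t^{(i)}$. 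This joint handling of the two sparsities is exactly what the Lasso-only and RDL-only analyses in prior work cannot perform, and it is the technical crux of the proposition.
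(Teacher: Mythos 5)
Your outer skeleton is the same as the paper's: invoke Theorem~\ref{thm:regret}, argue $G_{\mathcal{S}_1,\hat{\boldsymbol{\theta}}}=\tilde{O}(M)$, then optimize $N\asymp (MT/K)^{2/3}$; the final balancing step is carried out correctly. The genuine gap is in the middle step, which you yourself flag as the "technical crux" you have not executed. You propose to obtain $G=\tilde{O}(M)$ by re-deriving, from scratch, an approximate-sparsity bound on $\boldsymbol{\xi}_t^{(i)}$ from the spectral decay of $\boldsymbol{\Sigma}^{(i)}[\mathcal{S}_1^{(i)}]$ together with a restricted-eigenvalue condition on the transformed design $\mathbf{Z}_t^{(i)}$ whose constants depend on $M$. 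That analysis is not supplied, and it is also not the route the paper takes: no new RE-type condition on $\mathbf{Z}_t^{(i)}$ is needed beyond what is already baked into Proposition~\ref{prop:mu_bound_our}.

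What you missed is that the quantity $G_{\mathcal{S}_1^{(i)},\hat{\boldsymbol{\theta}}^{(i)}}$ is \emph{defined} (App.~\ref{sec:def_appendix}) as the product $\tilde{H}_{\min}^{(i)}\, M_{\mathcal{S}_1^{(i)}}^{(i)}\, d(\hat{\boldsymbol{\theta}}^{(i)},\boldsymbol{\theta}^{(i)})$, so the $M$ factor is already isolated, and the remaining work is only to show that $\tilde{H}_{\min}^{(i)}\cdot d(\hat{\boldsymbol{\theta}}^{(i)}_{\mathrm{Lasso}},\boldsymbol{\theta}^{(i)})=\tilde{O}(1)$. This is immediate once you notice that the hypothesis "the eigenvalues decay sufficiently fast" is given a precise quantitative meaning in App.~\ref{sec:both_appendix_proof}, namely $\tilde{H}_{\min}\le O\big(\sqrt{N}\,\polylog(T)\,(s_0\log p)^{-1/2}\big)$, which cancels exactly against the Lasso prediction rate $d=O\big(\sqrt{s_0\log p/N}\big)$ from App.~\ref{appendix:lasso_prediction_error}. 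In other words, the hard part of the step you were worried about is packaged into the hypothesis of the proposition rather than proved inside it; your proposal substitutes a much heavier (and unverified) spectral argument for what is, given the paper's definitions, a one-line cancellation. To repair your write-up, replace the second paragraph with: verify Assumptions~\ref{aspt:prediction_error} and~\ref{aspt:support} via the Lasso guarantees, quote the definition of $G$, apply the stated decay condition to kill $\tilde{H}_{\min}\cdot d$, and conclude $G=\tilde{O}(M)$; the rest of your argument then goes through unchanged.
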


\begin{remark}[Comparison within Scenario~3]
Since $M_i^2=\erank(\mathbf{\Sigma}_i[\mathcal{S}_1^{(i)}])\le \mathrm{rank}(\mathbf{\Sigma}_i[\mathcal{S}_1^{(i)}])\le |\mathcal{S}_1^{(i)}|$, we have $M\le \max_i \sqrt{|\mathcal{S}_1^{(i)}|}$. 
By the Lasso support-size control (App.~\ref{appendix:lasso_variable_selection}), $|\mathcal{S}_1^{(i)}|\lesssim s_0$ with high probability (w.h.p.), hence $M\lesssim \sqrt{s_0}$ w.h.p.; with eigenvalue decay, $M$ is typically much smaller than $\sqrt{s_0}$. 
Consequently, Prop.~\ref{prop:both} improves upon the parameter-only rate $\tilde{O}(K^{1/3}s_0^{1/3}T^{2/3})$ (cf. Prop.~\ref{prop:sparse_model} and \citealp{li2022simple}) whenever $M<\sqrt{s_0}$—capturing the gain from jointly exploiting parameter and spectral sparsity.
\end{remark}




\subsection{Scenario 4: Mixed Sparsities}~\label{sec:mixedarms}
Finally, we consider a mixed-sparsity setting: a subset of arms (Part~I) has sparse parameters $\{\bthetai\}$ with sparsity level $s_0$, whereas the remaining arms (Part~II) exhibit (approximately) sparse eigenvalues in their context covariances $\{\boldsymbol{\Sigma}^{(i)}\}$. In contrast to prior work, sparsity types vary across arms.

\begin{prop}[Mixed sparsity]\label{prop:mixed}
Consider \textsc{HOPE} configured as follows: Lasso serves as both the initial estimator and the support selector for Part~I; RDL serves as the initial estimator for Part~II with $\mathcal{S}_1^{(i)}=[p]$. 
With $N$ chosen as in Eq.~\eqref{equation:N_scenario4}, under Assump.~\ref{aspt:problem_parameters}, the Lasso guarantees in Apps.~\ref{appendix:lasso_prediction_error}–\ref{appendix:lasso_variable_selection} (Part~I), and the RDL guarantee in App.~\ref{appendix:RDL_prediction_error} (Part~II), if the Part~II covariances satisfy Example~\ref{example}(A), then
\[
  R(T)=\tilde{O}\!\left(\max\left\{K^{\frac13}s_0^{\frac13}T^{\frac23},\; K^{\frac12}p^{\frac{1}{2T^\alpha}}T^{\frac{a+2}{4}},\; K^{\frac13}p^{\frac{2}{3T^\alpha}}T^{\frac{2-a}{3}}\right\}\right).
\]
An analogous bound holds when Part~II satisfies Example~\ref{example}(B).
\end{prop}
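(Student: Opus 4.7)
The plan is to reduce Proposition~\ref{prop:mixed} to the general regret bound in Theorem~\ref{thm:regret} and then recycle the per-arm PWE analyses already established behind Propositions~\ref{prop:sparse_model} and \ref{prop:sparse_eigen_a}. The starting point is the standard ETC observation that the arm-selection rule $i(t)=\arg\max_i \hat\mu_t^{(i)}$ yields instantaneous regret bounded by $2\max_{i\in[K]}|\hat\mu_t^{(i)}-\mu_t^{(i)}|$, so after the round-robin exploration of length $T_0=2NK$ the cumulative regret splits as
\begin{equation*}
R(T)\le O(T_0) + 2(T-T_0)\,\Ep\!\left[\max_{i\in[K]}\big|\hat\mu_t^{(i)}-\mu_t^{(i)}\big|\right],
\end{equation*}
and the second term is exactly what the PWE concentration bound (Prop.~\ref{prop:mu_bound_our}) controls.

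First, for each Part~I arm, I would discharge Assumptions~\ref{aspt:prediction_error} and \ref{aspt:support} via the Lasso guarantees in Apps.~\ref{appendix:lasso_prediction_error}--\ref{appendix:lasso_variable_selection}; the resulting per-arm exploitation contribution is $\tilde O(T\sqrt{s_0/N})$, exactly as inside the proof of Proposition~\ref{prop:sparse_model}. Second, for each Part~II arm, I would discharge Assumption~\ref{aspt:prediction_error} via the RDL guarantee in App.~\ref{appendix:RDL_prediction_error} with $\mathcal{S}_1^{(i)}=[p]$; plugging the spectral profile of Example~\ref{example}(A) into Prop.~\ref{prop:mu_bound_our} then reproduces the two competing per-arm contributions already used in the proof of Proposition~\ref{prop:sparse_eigen_a}. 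A union bound over the $K$ arms and the $T-T_0$ rounds absorbs the polylogarithmic overhead.

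Finally, I would choose $N$ so that the exploration cost $T_0=2NK$ matches the worst of the three exploitation rates above. Since the maximum runs over \emph{both} arm types, $N$ must simultaneously dominate the three scale requirements, giving
\begin{equation*}
N \asymp \max\!\left\{K^{-\tfrac{2}{3}} s_0^{\tfrac{1}{3}} T^{\tfrac{2}{3}},\ K^{-\tfrac{1}{2}} p^{\tfrac{1}{2T^a}} T^{\tfrac{a+2}{4}},\ K^{-\tfrac{2}{3}} p^{\tfrac{2}{3T^a}} T^{\tfrac{2-a}{3}}\right\},
\end{equation*}
which is the content of Eq.~\eqref{equation:N_scenario4}. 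Substituting back yields the claimed regret rate, and the Example~\ref{example}(B) analogue follows by swapping the Part~II bound for the one underlying Proposition~\ref{prop:sparse_eigen_b} (see App.~\ref{appendix:extra-props}).

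The main obstacle is not any single calculation but the cross-type coupling between the two arm groups. Since the empirically optimal arm at round $t$ can fall on either side of the partition, we need \emph{uniform} prediction-error control across both groups at every round, combining the Lasso high-probability event on Part~I with the RDL high-probability event on Part~II through a careful union bound that still respects Assumption~\ref{aspt:problem_parameters}. More subtly, we must verify that committing to a \emph{single} exploration length $N$ shared by both groups is near-optimal: this holds because the bound in Theorem~\ref{thm:regret} depends on $N$ only through $\max_i\epsilon^{(i)}_N$, so taking $N$ to be the pointwise maximum of the three thresholds is simultaneously feasible and tight. The remaining bookkeeping is to propagate the spectral-decay constants from Example~\ref{example}(A) through the PWE concentration without losing the subpolynomial dependence on $p$, which is precisely what keeps the $p^{1/(2T^a)}$ and $p^{2/(3T^a)}$ factors benign under the mild growth condition on $\log p$.
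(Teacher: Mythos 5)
Your proposal is correct and follows essentially the same route as the paper's (much terser) proof: invoke the general bound of Theorem~\ref{thm:regret}, split the maximum in $G_{\mathcal{S}_1,\hat{\boldsymbol{\theta}}}$ over the Part~I and Part~II arms, reuse the per-scenario error bounds from Propositions~\ref{prop:sparse_model} and~\ref{prop:sparse_eigen_a}, and set $N$ to the maximum of the three thresholds, which is exactly Eq.~\eqref{equation:N_scenario4}. Your additional remarks on the cross-group union bound and on why a single shared $N$ suffices (the bound depends on $N$ only through $\max_i$ of the per-arm errors) make explicit what the paper leaves implicit, but they do not change the argument.
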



It can be observed that the performance of HOPE is dominated by the worst among the two groups of arms under this scenario. Moreover, we note that \textbf{\textit{none}} of the previous works can handle this scenario as their approaches are confined to only one type of sparsity~\citep{li2022simple,komiyama2024high,bastani2020online,wang2018minimax}.

\begin{figure*}[t]
    \centering
    \begin{subfigure}{0.25\textwidth}
        \centering
        \includegraphics[width=\textwidth]{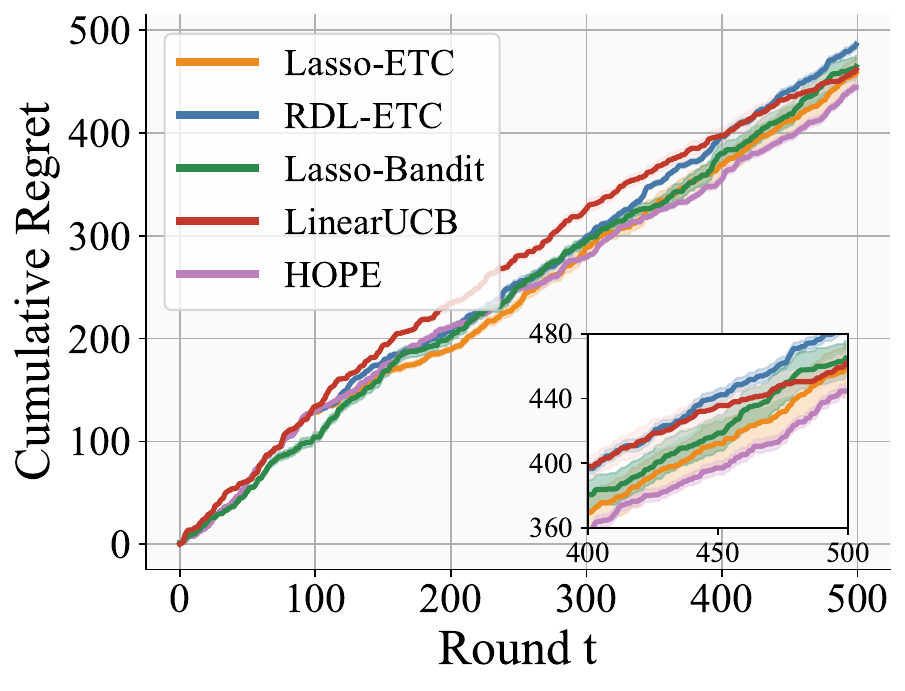}
        \caption{Scenario 1}
    \end{subfigure}%
    \begin{subfigure}{0.25\textwidth}
        \centering
        \includegraphics[width=\textwidth]{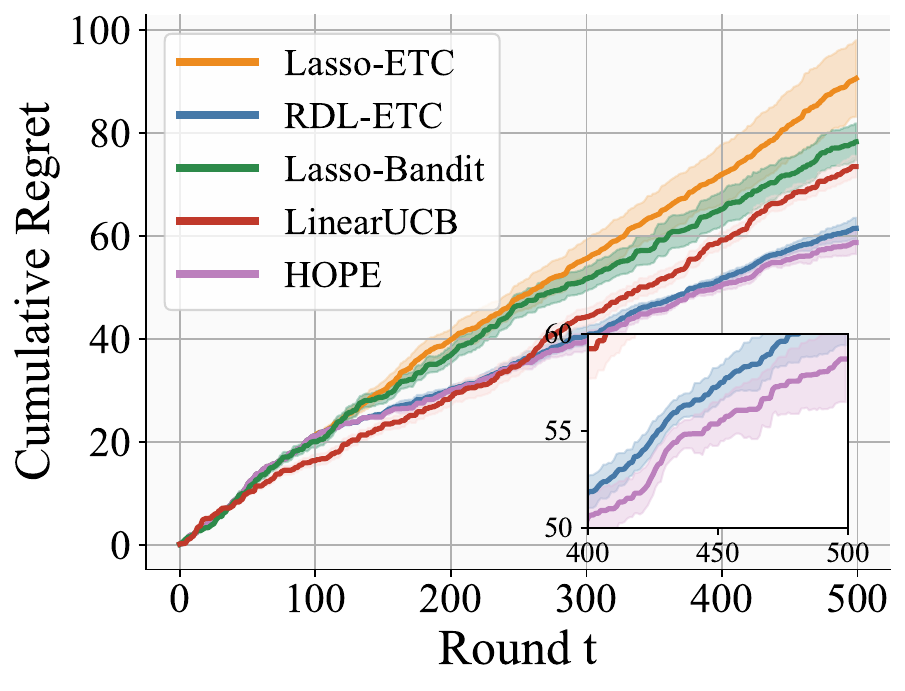}
        \caption{Scenario 2}
    \end{subfigure}%
    \begin{subfigure}{0.25\textwidth}
        \centering
        \includegraphics[width=\textwidth]{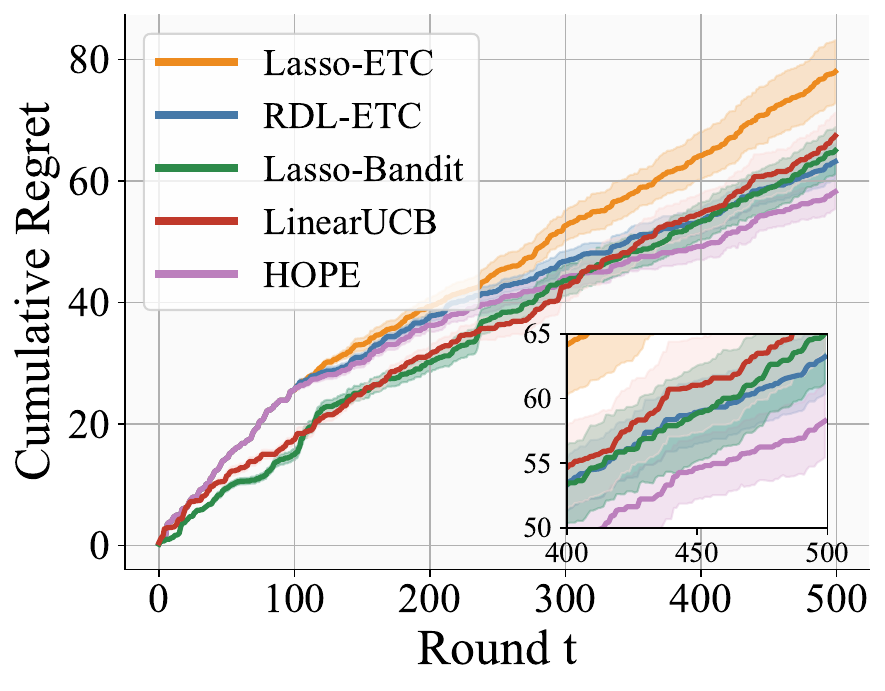}
        \caption{Scenario 3}
    \end{subfigure}%
    \begin{subfigure}{0.25\textwidth}
        \centering
        \includegraphics[width=\textwidth]{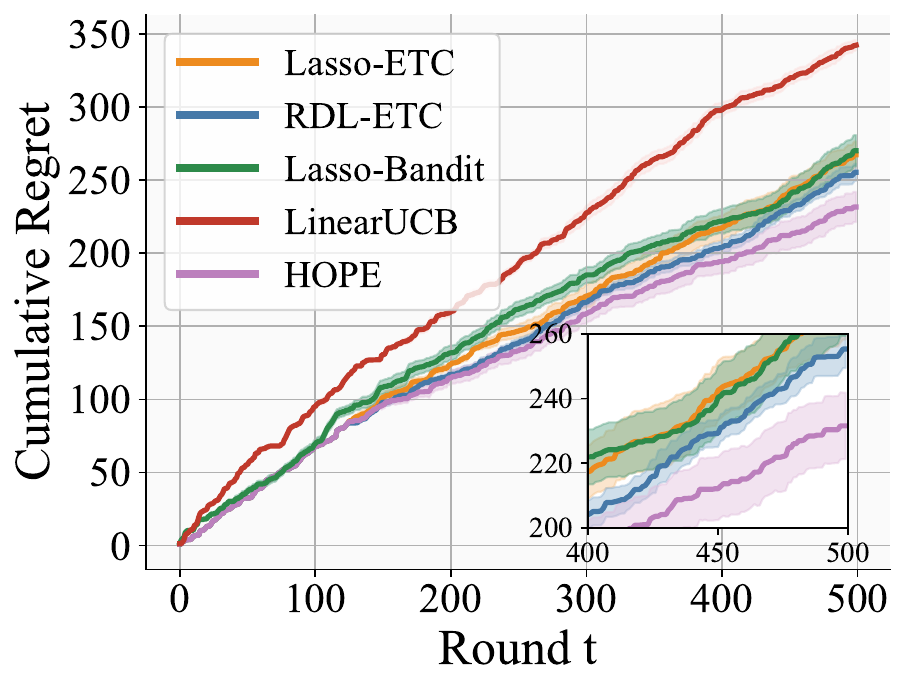}
        \caption{Scenario 4}
    \end{subfigure}
    \vspace{-0.2in}
    \caption{Comparison of methods on four different cases. Smaller regret indicates better performance. The solid lines are the mean of 10 repetitions, and the bands represent the standard deviation.}
    \label{fig:main_result}
    \vspace{-0.2in}
\end{figure*}
\section{Experiments}\label{sec:experiments}
\vspace{-0,1in}
\subsection{Experiment Settings}
\vspace{-0,1in}

We compare HOPE with Lasso-ETC~\citep{li2022simple}, RDL-ETC~\citep{komiyama2024high}, Lasso-Bandit~\citep{bastani2020online}, and LinearUCB~\citep{chu2011contextual} under four settings. For all experiments, we set $K=5, T=500$, and $p=200$. We denote the sparsity ratio $r(\btheta)$ of $\btheta$ as $s_0/p$. The non-zero elements of all arms are sampled from a standard normal distribution. The covariance matrix $\mathbf{\Sigma}^{(i)}$ and $r(\bthetai)$ for arm $i$ are configured as follows: \ding{182} \textbf{Scenario 1}(§~\ref{sec:sparse_only}): We set $\mathbf{\Sigma}^{(i)}=\mathbf{I}$ and $r(\bthetai)=0.1$ for $i\in [K]$. \ding{183} \textbf{Scenario 2}(§~\ref{sec:sparse_eigen}): We set $\mathbf{\Sigma}^{(i)}=c^{(i)}\mathbf{diag}(\lambda_1,..,\lambda_p)$ with $c^{(i)}\sim\mathrm{Uni}[0.5,1.5]$, where $\lambda_k = k^{-1+\frac{1}{T}}$. We set $r(\bthetai)=0.9$ for $i\in [K]$. \ding{184} \textbf{Scenario 3}(§~\ref{sec:sparse arms and eigen}): Weset $\mathbf{\Sigma}^{(i)}=c^{(i)}\mathbf{\Sigma}$ with $c^{(i)}\sim\mathrm{Uni}[0.5,1.5]$ and $\mathbf{\Sigma} =\mathbf{diag}(\lambda_1,..,\lambda_p)$, where $\lambda_k = k^{-1+\frac{1}{T}}$, but we set $r(\bthetai)=0.1$ for $i\in [K]$. \ding{185} \textbf{Scenario 4}(§~\ref{sec:mixedarms}): We set $r(\btheta_1)=r(\btheta_2)=0.1$ and $\mathbf{\Sigma}^{(1)}=\mathbf{\Sigma}^{(2)}=\mathbf{I}$. For the remaining three arms, we set $r(\bthetai)=0.9$ with $\mathbf{\Sigma}^{(i)}=c^{(i)}\mathrm{diag}(\lambda_1,..,\lambda_p)$ and $ \lambda_k = k^{-1+\frac{1}{T}}$. We generate $\mathbf{X}^{(i)}(t)$ from $N(0, \mathbf{\Sigma}^{(i)})$ and compute $\byi(t)=\bfXi(t)\bthetai+\boldsymbol{\varepsilon}$ with the
noise $\boldsymbol{\varepsilon}\sim N(0,0.1\mathbf{I})$.

\subsection{Experiment Results}



Fig.~\ref{fig:main_result} shows the results of our proposed HOPE algorithm alongside other high-dimensional ETC algorithms in four scenarios. Our key observations are:
\textbf{(1) Comparable Performance in Homogeneous Scenarios:} HOPE matches the performance of existing algorithms in Scenarios 1 and 2, which are well-studied. By leveraging the initial estimator, HOPE selects the most suitable method for final predictions.
\textbf{(2) Superior Performance in Heterogeneous Scenarios:} In Scenario 3, where both model parameters and eigenvalues of $\mathbf{\Sigma}^{(i)}$ exhibit sparsity, HOPE outperforms Lasso-ETC and RDL-ETC by utilizing both sparsity types. In Scenario 4, varying sparsity ratios challenge other methods; for instance, Lasso-ETC struggles to adapt to non-sparse scenarios. HOPE consistently excels due to the adaptability of the PWE approach.

\section{Conclusions}
Existing high-dimensional linear contextual bandit algorithms typically focus on one specific structure, either sparse model parameters or sparse eigenvalues of the context covariance matrices. In this work, we introduced a powerful pointwise estimator (PWE), capable of adaptively handling both kinds of sparsity. Based on it, the algorithm HOPE was proposed. Comprehensive theoretical analyses were performed to highlighting the effectiveness and flexibility of HOPE. In two existing homogeneous scenarios, HOPE achieved improved results compared to previous approaches. In two newly-proposed challenging heterogeneous scenarios, HOPE can still perform in a theoretically efficient manner while previous approaches failed. Empirical studies further demonstrated the superiority and adaptability of HOPE across various scenarios. To the best of our knowledge, HOPE is the first to effectively address both types of sparsity in high-dimensional contextual bandits problems.

\bibliography{iclr2026_conference}
\bibliographystyle{iclr2026_conference}

\clearpage

\newpage

\newpage

\newpage

\appendix

\section{Related Works}\label{sec:related}

\paragraph{High-dimensional linear contextual bandits.}

To address the curse of dimensionality, research in this field often incorporates additional structural assumptions~\cite{wang2018minimax, kim2019doubly,bastani2020online,deshpande2012linear,chen2021efficient,hamidi2019personalizing, shi2021federated}. One prevalent assumption is that the model parameters exhibit sparsity. Various tools are employed, including Lasso regression~\cite{bastani2020online, ren2024dynamic, hao2020high, oh2021sparsity, li2022simple}, subset selection methods~\cite{wang2020nearly}, and Thompson sampling techniques~\cite{chakraborty2023thompson}. In contrast, \citet{komiyama2024high} study the sparse structure of context covariance eigenvalues and propose an algorithm based on RDL~\cite{bartlett2020benign}, achieving sublinear regret rates in low-rank scenarios. While these approaches demonstrate effectiveness in homogeneous settings where all arms have one same type of sparsity, they face limitations in more heterogeneous contexts, e.g., arms have two types of sparsity at the same time (as in Sec.~\ref{sec:sparse arms and eigen}) or different arms have different types of sparsity (as in Sec.~\ref{sec:mixedarms}). This variability restricts their flexibility and applicability.

\paragraph{High-dimensional linear regression.} Various regularization techniques for sparsity settings, such as Lasso and other penalized methods, have been proposed~\cite{tibshirani1996regression, zou2005regularization, fan2001variable, zhang2010nearly}. Theoretical foundations for these scenarios are well-established in the literature~\cite{wainwright2019high, vershynin2018high, zhang2023mathematical}. Beyond that, researchers begin exploring overparameterized settings using the ridgeless ordinary least squares (OLS) estimator, which employs the Moore-Penrose generalized inverse to effectively handle non-sparse scenarios \cite{bartlett2020benign, azriel2020estimation, hastie2022surprises}. However, these methods often estimate high-dimensional parameters directly with insufficient data, resulting in suboptimal performances. In contrast, \citet{zhao2023estimation} focus on the final reward as an unknown parameter for prediction. This approach reduces the model to fewer parameters, making it more tractable and solvable.

\section{Comparison of HOPE with Existing Works} \label{app:related}

To clarify the comparison, we first formulate the setting of our work and then outline the settings of previous works~\cite{hao2020high, li2022simple, bastani2020online}, highlighting how they relate to our models.

\textbf{This Work and \citet{komiyama2024high}: Finite Heterogeneous Arms, Stochastic Heterogeneous Contexts.} We follow the problem formulation in \citet{komiyama2024high}. Specifically, We consider $ K $ $p$-dimensional parameters $ \{\boldsymbol{\theta}_i\}_{i=1}^K $, one for each arm (thus referred to as ``finite heterogeneous arms''). At each time $ t \in [T] $, a set of $K$ $p$-dimensional contexts $ \{\boldsymbol{x}_{t,i}\}_{i=1}^K $ is generated, also one for each arm (thus referred to as ``stochastic heterogeneous contexts''). The agent then selects an action $ a_t \in [K] $ and receives a reward:
        \begin{align*}
            y_t = \langle\boldsymbol{\theta}_{a_t}, \boldsymbol{x}_{t,a_t} \rangle + \varepsilon_t.
        \end{align*}

\textbf{\citet{li2022simple, lee2024lasso}: Finite Homogeneous Arms, Stochastic Heterogeneous Contexts.} One model parameter $\boldsymbol{\beta} \in \mathbb{R}^{p'}$ is considered, which is shared among all arms (thus referred to as ``finite homogeneous arms''). At each time $t\in[T]$, a set of $K$ $p'$-dimensional contexts $\{\boldsymbol{z}_{t,i}\}_{i=1}^K$ is generated, one for each arm. The agent then selects an action $a_t \in [K]$ and receives a reward:
\begin{align*}
    y_t = \langle\boldsymbol{\beta}, \boldsymbol{z}_{t, a_t} \rangle + \varepsilon_t.
\end{align*}

This setting, due to its homogeneity, can be understood as a degeneration of the one considered in this work (i.e., restricting $\boldsymbol{\theta}_i = \boldsymbol{\beta}, \forall i\in [K]$). From another perspective, it can be translated into the heterogeneous setting by consider $p' = Kp$, $\boldsymbol{\beta} = [\boldsymbol{\theta}_i^\top, \cdots, \boldsymbol{\theta}_K^\top]^\top$, and $\boldsymbol{z}_{t, i} = [\boldsymbol{0}^\top, \dots, \boldsymbol{0}^\top, \boldsymbol{x}_{t,i}^\top, \boldsymbol{0}^\top, \dots, \boldsymbol{0}^\top]^\top$ (i.e., with $\boldsymbol{x}_{t,i}^\top$ occupying positions in $[(i-1)p+1, ip]$).

In the sparse scenario, the regret bound obtained in \citet{li2022simple} is $O (s_0^{1/3}T^{2/3}\polylog (p'T) ) $. After converting the settings with the above transformation, their regret bound becomes $O (K^{\frac{1}{3}}s_0^{\frac{1}{3}}T^{\frac{2}{3}}\polylog(KpT))$, worse than our bound $O(K^{\frac{1}{3}}s_0^{\frac{1}{3}}T^{\frac{2}{3}}\polylog(T) ) $ in Proposition~\ref{prop:sparse_model} in the high-dimensional scenario with $T \ll p$. Moreover, when the eigenvalues of the covariance matrix decay rapidly, our regret bound improves to $\tilde{O}(K^{\frac{1}{2}}s_0^{\frac{1}{2}}T^{ \frac{1}{2}}) $, offering a significant advantage. These results demonstrate that our approach not only achieves a comparable regret bound but, in some cases, provides superior performance, underscoring its effectiveness in the high-dimensional contextual bandit setting. 

The regret of \citet{lee2024lasso} is $O(s_0^2 \log(p' T) \log T$. After converting the settings with the above transformation, their regret bound becomes $O( K^2 s_0^2 \log(K p' T) \log T )$.  While their bound demonstrates better dependence on the time horizon $T$, it exhibits worse scaling with respect to both $K$ and $s_0$ compared to our results. Moreover, their theoretical guarantees \textbf{require an additional margin condition} (i.e., Assumption 2 in \cite{lee2024lasso}), which imposes stricter requirements on the problem structure than our framework. This assumption is unnecessary for our theoretical analysis. Due to these fundamental differences in problem setup and theoretical requirements, a direct comparison between the two results would be inappropriate.

\textbf{\citet{bastani2020online, wang2018minimax}: Finite Heterogeneous Arms, Stochastic Homogeneous Contexts.} Given $ K $ $ p'' $-dimensional vectors $ \{\boldsymbol{\beta}_i\}_{i=1}^K $ (i.e., finite heterogeneous arms), the model generates a $ p'' $-dimensional context $ \boldsymbol{z}_t $ at each time $ t \in [T] $, which is shared among all arms (thus referred to as ``stochastic homogeneous contexts''). The agent chooses an action $ a_t \in [K] $ and receives a reward:
\begin{align*}
    y_t = \langle \boldsymbol{\beta}_{a_t}, \boldsymbol{z}_{t} \rangle + \varepsilon_t.
\end{align*}
Similarly as abovementioned, this setting can also be viewed as a degenerated one from the setting in this work (i.e., restricting $\boldsymbol{x}_{t,i} = \boldsymbol{z}_t, \boldsymbol{\theta}_i = \boldsymbol{\beta}_i, \forall i\in[K]$. Also, it can be translated into the setting in this work by considering $p'' = Kp$, $\boldsymbol{\beta}_{i} = [\boldsymbol{0}^\top, \dots, \boldsymbol{0}^\top, \boldsymbol{\theta}_{i}^\top, \boldsymbol{0}^\top, \dots, \boldsymbol{0}^\top]^\top$ and $\boldsymbol{z}_t = [\boldsymbol{x}_{t,1}^\top, \dots, \boldsymbol{x}_{t,K}^\top]^\top$.
        

It is noted that the regret bound obtained in \citet{bastani2020online} only has logarithmic dependency on $T$, instead of the polynomial ones in this work $\mathcal{O}(\tau Ks^2\log^2 T)$; however, \citet{bastani2020online} requires additional margin and constant gap conditions for competitive arms, which are stricter than the assumptions in our setting and not required for our theory. Due to such unfairness, the results are non-comparable.


\textbf{\citet{hao2020high}: (Potentially) Infinite Homogeneous Arms, Fixed Heterogeneous Contexts} The model considers a shared model parameter $\boldsymbol{\beta} \in \mathbb{R}^{p^\dagger}$ shared among all arms, and a compact action set $\mathcal{Z} \subset \mathbb{R}^{p^\dagger}$ (which is fixed in all time steps). At each time $t$, the agent  selects an action $\boldsymbol{z}_t \in \mathcal{Z}$ and receives a reward:
        \begin{align*}
            y_t = \langle \boldsymbol{\beta}, \boldsymbol{z}_t \rangle + \varepsilon_t.
        \end{align*}

Our setting and theirs, in general, can\textbf{not} be converted into each other due to the different considerations of arms and contexts. In particular, their analysis fundamentally relies on the assumption of fixed arm contexts, whereas our approach accommodates stochastic arms contexts.

\section{Omitted Algorithmic Details}
In the main paper, there are two components introduced in the design of PWE, i.e., Algorithm~\ref{alg:pwe}, without discussions: the estimated support set $\mathcal{S}_1^{(i)}$ and the bases $\boldsymbol{\Gamma}_t^{(i)}$ for sparisification, which are further illustrated in the following.

\subsection{Estimating The Support Set}\label{app:estimate_supp}

\subsubsection{Intuitions}
The following observation motivates us to consider estimators using the information of the sparsity degree of $\bthetai$. 
For any subset $\mathcal{S}_1^{(i)} \subseteq\{1, \ldots, p\}$ such that $\mathcal{S}_0^{(i)} \subseteq \mathcal{S}_1^{(i)}$, we observe that
\begin{align*}
    \mu_t^{(i)}
:=\langle \boldsymbol{\theta}^{(i)}, \bxti\rangle
=\langle  \boldsymbol{\theta}^{(i)}[{\mathcal{S}_0^{(i)}}],  \boldsymbol{x}_{t}^{(i)}[\mathcal{S}_0^{(i)}]\rangle
=\langle \boldsymbol{\theta}^{(i)}[\mathcal{S}_1^{(i)}], \boldsymbol{x}_{t}^{(i)}[\mathcal{S}_1^{(i)}]\rangle
=\langle \boldsymbol{\theta}^{(i)}, \tilde{\boldsymbol{x}}_{t,\mathcal{S}_1^{(i)}}^{(i)}\rangle
=:\mu_{t,{\mathcal{S}_1^{(i)}}}^{(i)},
\end{align*}

where $\boldsymbol{x}_{t}^{(i)}[\mathcal{S}_0^{(i)}]$ and $\boldsymbol{\theta}^{(i)}[\mathcal{S}_0^{(i)}]$ (similarly, $\boldsymbol{x}_{t}^{(i)}[\mathcal{S}_1^{(i)}]$ and $\boldsymbol{\theta}^{(i)}[\mathcal{S}_1^{(i)}]$) are the sub-vectors $\bxti$ and $\boldsymbol{\theta}^{(i)}$ truncated with elements contained in $\mathcal{S}_0$ (similarly, $\mathcal{S}_1$), respectively, 
and $\tilde{\boldsymbol{x}}_{t,\mathcal{S}_1^{(i)}}^{(i)}$ is a $p$-dimensional vector obtained by setting the elements of $\bxti$ that are not in  $\mathcal{S}_1$ to be zero. Thus, instead of estimating $\mu_t^{(i)}$, we can equivalently consider the prediction at the point $\tilde{\boldsymbol{x}}_{t,\mathcal{S}_1^{(i)}}^{(i)}$, which is a sparse vector when $s^{(i)}_1 = |\mathcal{S}_1^{(i)}|$ is small.

\subsubsection{Two Estimation Techniques}
Then, we introduce two techniques for selecting the support set $\mathcal{S}_1^{(i)}$, which is used in Algorithm~\ref{alg:pwe}. Specifically, we explain how Lasso~\citep{tibshirani1996regression} and Sure Independence Screening (SIS)~\citep{fan2008sure} are applied in the context of our model.

\paragraph{Lasso.}
Lasso (Least Absolute Shrinkage and Selection Operator)~\citep{tibshirani1996regression} provides another approach for variable selection, which simultaneously performs regression and selection by adding an $l_1$-norm regularization term to the least squares loss function. In our setting, given the arm contexts $\bfXi \in \R^{N \times p}$ and the rewards $\byi \in \R^N$, Lasso solves the following optimization problem for arm $i$ as discussed in the main paper:
\[
\hat{\boldsymbol{\theta}}^{(i)}_{\text{Lasso}} = \argmin_{\boldsymbol{\theta} \in \R^p} \left\{ \|\byi - \bfXi \boldsymbol{\theta} \|_2^2 + \lambda \|\boldsymbol{\theta}\|_1 \right\}.
\]
where $\lambda > 0$ is a regularization parameter. After solving the Lasso optimization, the selected set $\mathcal{S}^{(i)}_1$ consists of the indices corresponding to the non-zero entries in $\hat{\boldsymbol{\theta}}^{(i)} = [\hat{\theta}^{(i)}_1, \cdots, \hat{\theta}^{(i)}_p]$, i.e.,
\[
\mathcal{S}^{(i)}_1 = \left\{k \in [p] : \hat{\theta}_k^{(i)} \neq 0 \right\}.
\]

\paragraph{SIS}

SIS (Sure Independence Screening) \citep{fan2008sure} is a two-step procedure designed for high-dimensional data, particularly when $p \gg N$. In our setup, where $\bfXi \in \R^{N \times p}$ represents the collected arm contexts for arm $i$, SIS computes the marginal correlation between each predictor $\bfXi_{:,k}$ and the reward vector $\byi$. The marginal correlation is defined as:
\[
\hat{\rho}_k^{(i)} = \frac{1}{N} \sum_{\tau=1}^{N} {x}_{\tau, k}^{(i)} y_\tau^{(i)}\ \mathrm{for}\  k \in [p], 
\]
where ${x}_{\tau, k}^{(i)}$ is the $k$-th predictor (i.e., the context feature at $k$-th dimension) for arm $i$ at time $\tau$, and $y_\tau^{(i)}$ is the corresponding reward.

SIS selects a subset of predictors $\mathcal{S}^{(i)}_1 \subseteq [p]$ by ranking the predictors based on the magnitude of their marginal correlations as:
\[
\mathcal{S}^{(i)}_1 = \left\{k \in [p] : |\hat{\rho}_k^{(i)}| \geq \tau_{\text{SIS}} \right\},
\]
where $\tau_{\text{SIS}}$ is a threshold chosen to ensure that the size of the selected set is small, typically $|\mathcal{S}^{(i)}_1| = s_1^{(i)} \ll p$. 

\begin{remark}\normalfont
    Additionally, we note that a two-step procedure can be employed: first, applying SIS to quickly reduce the dimensionality of the problem, and then using Lasso to further refine the selection of important predictors. This combined approach is highly efficient in high-dimensional settings.
\end{remark}


\subsection{Constructing Basis for Sparisification}\label{app:construct_basis}
In this section, we discuss the detailed construction of $\boldsymbol{\Gamma}_t^{(i)}$. To facilitate discussions, the notation $\boldsymbol{\lambda}(\mathbf{A})$ is introduced to denote the vector of eigenvalues of a positive semi-definite matrix $\mathbf{A}\in\mathbb{R}^{m\times m}$, arranged in decreasing order. We consider $\boldsymbol{\lambda}(\mathbf{A})$ to be approximately sparse when only a few eigenvalues are significantly larger than $m^{-1}\sum_j\boldsymbol{\lambda}_j(\mathbf{A})$. Recall that
\begin{align*}
    \boldsymbol{\xi}_t^{(i)} = (\Gammati)^{-1}\boldsymbol{\zeta}_{t}^{(i)} = (\Gammati)^{-1} \frac{\bfXi \mathbf{Q}_{t}^{(i)}\bthetai}{\sqrt{N}}.
\end{align*}
Since $\boldsymbol{\zeta}_{t}^{(i)}$ is in general a non-sparse vector, we aim to properly choose $\boldsymbol{\Gamma}_t^{(i)}$ such that $\boldsymbol{\xi}_t^{(i)}$ is (approximately) sparse. Specially, we focus on two different sources of information: the approximate sparse eigenvalues of the covariance matrix $\mathbf{\Sigma}^{(i)}$ and the potential sparsity of the initial parameter vector $\bthetai$.

For the first information, i.e., the approximate sparse eigenvalues of
 the covariance matrix $\mathbf{\Sigma}^{(i)}$, given the projection matrix $\mathbf{Q}_{t}^{(i)}$, we have the following relationship:

\begin{align*}
    \boldsymbol{\lambda}_j\left(N^{-1} \bfXi \mathbf{Q}_{t}^{(i)} (\bfXi)^{\top}\right) &= \boldsymbol{\lambda}_j\left(N^{-1} \bfXi \mathbf{Q}_{t}^{(i)} (\mathbf{Q}_{t}^{(i)})^\top (\bfXi)^\top\right) \\ &= \boldsymbol{\lambda}_j\left(N^{-1}\mathbf{Q}_{t}^{(i)} (\bfXi)^{\top} \bfXi \mathbf{Q}_{t}^{(i)}\right), \; \forall j \in [N].
\end{align*}

Note that the population version of $\boldsymbol{\lambda}\left(N^{-1}\mathbf{Q}_{t}^{(i)} (\bfXi)^{\top} \bfXi \mathbf{Q}_{t}^{(i)}\right)$ is $\boldsymbol{\lambda}\left(\mathbf{Q}_{t}^{(i)} \mathbf{\Sigma}^{(i)} \mathbf{Q}_{t}^{(i)}\right)$.  Let $\boldsymbol{\Gamma}_{\mathrm{eg}} \boldsymbol{\Psi} \boldsymbol{\Gamma}_{\mathrm{eg}}^{\top}$ denote the spectral decomposition of $N^{-1} \bfXi \mathbf{Q}_{t}^{(i)} (\bfXi)^\top$ with $\boldsymbol{\Gamma}_{\mathrm{eg}} = [\boldsymbol{u}_{\mathrm{eg}, 1}, \ldots, \boldsymbol{u}_{\mathrm{eg}, N}] \in \mathbb{R}^{N \times N}$ representing the eigenvectors, and $\boldsymbol{\Psi} = \operatorname{diag}\left(\psi_1, \ldots, \psi_N\right)$ containing the corresponding eigenvalues in decreasing order. Then $\boldsymbol{u}_{\mathrm{eg}, i}, i\in[N]$ are also the left-singular vectors of $\bfXi \mathbf{Q}_{t}^{(i)}$. For $\boldsymbol{\Gamma}_t^{(i)}=\boldsymbol{\Gamma}_{\mathrm{eg}}$, the non-zero elements of $\boldsymbol{\xi}_t^{(i)}$ would concentrate on the significant eigenvalues of $\boldsymbol{\lambda}\left(\mathbf{Q}_{t}^{(i)} \mathbf{\Sigma}^{(i)} \mathbf{Q}_{t}^{(i)}\right)$. If $\boldsymbol{\lambda}(N^{-1} \bfXi \mathbf{Q}_{t}^{(i)} {\bfXi}^\top)$ is approximately sparse (i.e., $\boldsymbol{\lambda}\left(\mathbf{Q}_{t}^{(i)} \mathbf{\Sigma}^{(i)} \mathbf{Q}_{t}^{(i)}\right)$ is approximately sparse), $\boldsymbol{\xi}_t^{(i)}$ will also be approximately sparse regardless $\boldsymbol{\zeta}_{t}^{(i)}$ being sparse or not~\citep{zhao2023estimation}.

For the second information, i.e., the potential sparsity of the initial parameter vector $\bthetai$, if a reliable initial estimator $\hbthetai$ is available, e.g., a Lasso estimator in sparse model parameter settings, we can further use
\begin{align*}
    \boldsymbol{\zeta}_{\hbthetai} =  N^{-1 / 2} \bfXi \boldsymbol{Q}^{(i)}_{t}\hbthetai
\end{align*}
as an estimate of $\boldsymbol{\zeta}_{t}^{(i)}$.

To leverage both sources of information jointly, we construct $\boldsymbol{\Gamma}_t^{(i)}$ by replacing one of the columns (e.g., the $m$-th column) of $\boldsymbol{\Gamma}_{\mathrm{eg}}$ with $\bar{\boldsymbol{\zeta}}_{\hbthetai}=\boldsymbol{\zeta}_{\hbthetai}/\|\boldsymbol{\zeta}_{\hbthetai}\|_2$,
\begin{align*}
    \boldsymbol{\Gamma}_t^{(i)}(\hbthetai) = \left[\boldsymbol{u}_{\mathrm{eg}, 1}, \cdots, \boldsymbol{u}_{\mathrm{eg}, m-1}, \bar{\boldsymbol{\zeta}}_{\hbthetai}, \boldsymbol{u}_{\mathrm{eg}, m+1}, \cdots, \boldsymbol{u}_{\mathrm{eg}, N}\right],
\end{align*}
which is an empirical counterpart of
\begin{align*}
    \boldsymbol{\Gamma}_t^{(i)}(\bthetai) = \left[\boldsymbol{u}_{\mathrm{eg}, 1}, \cdots, \boldsymbol{u}_{\mathrm{eg}, m-1}, \bar{\boldsymbol{\zeta}}_{\bthetai}, \boldsymbol{u}_{\mathrm{eg}, m+1}, \cdots, \boldsymbol{u}_{\mathrm{eg}, N}\right],
\end{align*}
To mitigate the collinearity between $\boldsymbol{z}_{t}^{(i)}$ and other predictors in the transformed model, we replace $\boldsymbol{u}_{\mathrm{eg}, i_0}$ with $\boldsymbol{\bar{\zeta}}_{\hbthetai}$, where $i_0=\arg \max _{1 \leq i \leq N}\left|\boldsymbol{u}_{\mathrm{eg}, i}^{\top} \boldsymbol{z}_{t}^{(i)}\right|$. The non-singular property of $\boldsymbol{\Gamma}_t^{(i)}(\hbthetai)$ is discussed in \cite{zhao2023estimation}. 

The sparsity of $\boldsymbol{\xi}_t^{(i)}$ is influenced by both $\hbthetai$ and the sparsity of $\boldsymbol{\lambda}(N^{-1} \bfXi \mathbf{Q}_{t}^{(i)} {\bfXi}^\top)$.
In the ideal case where $\hbthetai=\bthetai$, it can be shown
that $\boldsymbol{\xi}_t^{(i)} :=\boldsymbol{\Gamma}_t^{(i)}(\hbthetai)^{-1} \boldsymbol{\zeta}_{\bthetai} \propto (1,0,...,0)^\top$, resulting in a sparse vector. That means if we can well estimate $\bthetai$, then $(\boldsymbol{u}_{\mathrm{eg}, j},j, j\neq= i_0)$ do not help much. However, when $\hbthetai$ is not good enough (e.g., $\bthetai$ is not sufficiently sparse) but $\boldsymbol{\lambda}(N^{-1} \bfXi \mathbf{Q}_{t}^{(i)} {\bfXi}^\top)$ is sufficiently sparse, the inclusion of $\boldsymbol{u}_{\mathrm{eg}}$'s will compensate the inaccuracies of $\hbthetai$. Thus, both sources of information can enhance each other, making the estimator more robust to underlying assumptions.

For HOPE, i.e., Algorithm~\ref{alg:etc}, we consider two initial estimators $\hat{\boldsymbol{\theta}}  \in\{\hat{\boldsymbol{\theta}}_{\text {lasso}}, \hat{\boldsymbol{\theta}}_{\text {rdl}}\}$ to construct $\mathbf{\Gamma}_t^{(i)}(\hat{\boldsymbol{\theta}})$. Especially, a standard cross-validation procedure can be performed to select a more accurate estimator. 
\citet{zhao2023estimation} also incorporates two additional choices for $\boldsymbol{\Gamma}_t^{(i)}$, i.e., $\mathbf{\Gamma}_t^{(i)}(\hat{\boldsymbol{\theta}}_{\text{ridge}})$ and $\boldsymbol{\Gamma}_{\mathrm{eg}}$, both of which can also be applied. For further details on these two choices, please refer to \cite{zhao2023estimation}

\section{Theoretical Results and Proofs}\label{app:proof}

\subsection{Additional Results for (Approximately) Sparse Eigenvalues Scenario}\label{appendix:extra-props}
\begin{prop}[Sparse Eigenvalues of $\boldsymbol{\Sigma}$: Example~\ref{example}(B)]\label{prop:sparse_eigen_b}
   With RDL as the initial estimator and  
   $\mathcal{S}_1^{(i)} = [p]$ for all arms, using $N \asymp \min( K^{-\frac{1}{2}}T^{ \frac{1}{2} + \frac{c(2-b)}{4}} , K^{-\frac{1}{2}} T^{ \frac{1}{2} + \frac{3c(1-b)}{4}} )$, under Assump.~\ref{aspt:problem_parameters} and the additional conditions specified in App.~\ref{appendix:RDL_prediction_error} for the guarantee of RDL, if the covariance matrices satisfy Example~\ref{example}(B), the regret of HOPE is bounded as 
   \begin{align*}
       R(T)= \tilde{O}\big( \min \big\{ K^{\frac{1}{2}} T^{ \frac{1}{2} + \frac{c(2-b)}{4}}, K^{\frac{1}{2}} T^{ \frac{1}{2} + \frac{3c(1-b)}{4}} \big\} \big).
   \end{align*}
   \vspace{-0.3in}
\end{prop}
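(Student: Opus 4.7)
My plan is to derive the bound as a direct specialization of the general regret guarantee in Thm.~\ref{thm:regret}, using RDL as the initial estimator and the trivial support set $\mathcal{S}_1^{(i)} = [p]$ for every arm $i \in [K]$. With this choice the Lasso-type support-recovery requirement in Assump.~\ref{aspt:support} becomes vacuous, so only Assump.~\ref{aspt:prediction_error} needs to be checked for RDL. The conditions in App.~\ref{appendix:RDL_prediction_error} supply the relevant prediction-error guarantee, which combined with Example~\ref{example}(B) (polynomial decay $\lambda_k(\mathbf{\Sigma}^{(i)}) = k^{-b}$ with $b \in (0,1)$ and $p = O(T^c)$ with $c \in (1, 1/(1-b))$) makes the predictive error an explicit function of $N$ and $T$.

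The heart of the argument is to specialize the problem-dependent quantity $G_{\mathcal{S}_1, \hat{\boldsymbol{\theta}}}$ from App.~\ref{sec:def_appendix} under this spectral decay. Inspecting how PWE is built in Secs.~\ref{subsec:transform}--\ref{subsec:overallpwe}, $G$ is driven by two effects: (i) the sparsification of the nuisance vector $\boldsymbol{\xi}_t^{(i)} = (\boldsymbol{\Gamma}_t^{(i)})^{-1} \boldsymbol{\zeta}_t^{(i)}$ through the eigen-basis of $N^{-1} \mathbf{X}^{(i)} \mathbf{Q}_t^{(i)} (\mathbf{X}^{(i)})^\top$, whose effective sparsity is governed by a trace-to-Frobenius-norm ratio of the (truncated) covariance, and (ii) the quality of the RDL initial estimator, which controls how well $\bar{\boldsymbol{\zeta}}_{\hat{\boldsymbol{\theta}}}$ approximates the true residual direction injected into $\boldsymbol{\Gamma}_t^{(i)}$. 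Under Example~\ref{example}(B) both admit closed-form polynomial scalings in $T$: using $\tr(\mathbf{\Sigma}^{(i)}) \asymp p^{1-b} \asymp T^{c(1-b)}$ and the phase transition of $\|\mathbf{\Sigma}^{(i)}\|_F$ at $b = 1/2$, the spectral contribution gives a factor growing like $T^{c(1-b)/2}$ up to logs, while the RDL prediction error contributes a factor on the order of $T^{c(1-b)}$. These produce two alternative bounds on $G$, one dominating the other depending on the regime of $b$.

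Plugging each of these two bounds on $G$ into Thm.~\ref{thm:regret} and minimizing $2NK + G(T-T_0)\polylog(T)/\sqrt{N}$ over $N$ reproduces the proposed choices $N \asymp K^{-1/2} T^{1/2 + c(2-b)/4}$ and $N \asymp K^{-1/2} T^{1/2 + 3c(1-b)/4}$, with matching regrets $\tilde{O}(K^{1/2} T^{1/2 + c(2-b)/4})$ and $\tilde{O}(K^{1/2} T^{1/2 + 3c(1-b)/4})$, respectively; taking the smaller yields the claimed bound. The main obstacle will be the bookkeeping in the slow-decay regime $b \in (0,1)$: unlike Example~(A), the effective rank of $\mathbf{\Sigma}^{(i)}$ is polynomial in $T$ and cannot be hidden in $\polylog$ factors. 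One must carefully verify that the restriction $c \in (1, 1/(1-b))$ keeps $\tr(\mathbf{\Sigma}^{(i)})/N$ small enough for RDL concentration to kick in, and that the resulting sparsity level of $\boldsymbol{\xi}_t^{(i)}$ is compatible with the Lasso step in Eqn.~\eqref{eqn:PWE} inside PWE; the appearance of the $\min\{\cdot, \cdot\}$ reflects a spectral phase transition at $b = 1/2$ whose constants must be tracked throughout.
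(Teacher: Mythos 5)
Your plan — specialize Thm.~\ref{thm:regret} with RDL as the initial estimator and $\mathcal{S}_1^{(i)}=[p]$, bound $G_{\mathcal{S}_1,\hat{\boldsymbol{\theta}}}$ via the RDL prediction-error guarantee of App.~\ref{appendix:RDL_prediction_error} together with the spectral quantities under Example~\ref{example}(B), and then balance $2NK$ against the exploitation term over $N$ to obtain the two branches of the $\min$ — is exactly the route the paper takes (its proof of this proposition simply defers to the argument for Prop.~\ref{prop:sparse_eigen_a}). Your identification of the $\min$ with the crossover of the two exponents at $b=1/2$ (where $\|\mathbf{\Sigma}^{(i)}\|_F$ changes behavior) is consistent with the stated bound, so the proposal matches the paper's approach.
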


Again, under the same Example~\ref{example}(B), the approach in \citep{komiyama2024high} obtains a regret of order $\tilde{O}(K^{\frac{2}{3}}T^{\frac{2+c(1-b)}{3}})$. It can be observed that the regret of HOPE is better when $b< 1/2$.

\subsection{Notations and Definitions}\label{sec:def_appendix}

In this section, we define the key parameter $G_{\mathcal{S}_1, \hat{\boldsymbol{\theta}}}$ in Theorem~\ref{thm:regret}. Some notations are first introduced in the following. Let $\bfA \in \mathbb{R}^{m \times m}$ be a symmetric positive semidefinite matrix, with eigenvalues $\lambda_1(\bfA) \geq \cdots \geq \lambda_m(\bfA)$. The smallest nonzero eigenvalue is denoted by $\lambda_{\min }^{+}(\bfA)$. 

\begin{definition}[Prediction Error]
    For one estimator $\hbthetai \in \R^p$, its prediction error with respect to $\bthetai$ is defined as:
\begin{align*}
    d(\hbthetai, \boldsymbol{\theta}^{(i)}) :=\left(\operatorname{var}\left[(\hbthetai-\bthetai)^{\top} \bxti\right] \right)^{1 / 2}.
\end{align*}
\end{definition}


\begin{definition}
        For a positive semi-definite matrix $\bfA \in \mathbb{R}^{n \times n}$ with positive eigenvalues $\lambda_1(\bfA) \geq \cdots \geq \lambda_n(\bfA) \geq 0$, we define
\begin{align*}
    &\tilde{\lambda}_{k}(\bfA):= \frac{1}{n-k-1} \sum_{m=k+1}^n \lambda_m(\bfA), \; 0 \leq k \leq n-2,\\
    &\tilde{\lambda}_{n-1}(\bfA):= {\lambda}_{n}(\bfA), \mathrm{and} \;\tilde{\lambda}_{n}(\bfA):= 0.
\end{align*}
\end{definition}

\begin{definition}[H Quantity] The following quantities are defined
\[
\tilde{H}_{k}^{(i)} := \sqrt{k} + \sqrt{N-k} \sqrt{\tilde{\lambda}_{k-1}\left({\mathbf{\Sigma}}^{(i)}\right) \frac{N}{p \lambda_{\min }^{+}(\Sigmai)}}, \; \forall k \in [N],
\]
and
\[
\tilde{H}_{\min }^{(i)} := \min _{k \in [N]} \tilde{H}_{k}^{(i)}, \qquad  \tilde{H}_{\min} = \max_{i\in [K]}\tilde{H}_{\min }^{(i)},
\]
where it is clear that $\tilde{H}_{\min }^{(i)} \leq \tilde{H}_{N}^{(i)} = \sqrt{N}$.
\end{definition}

\begin{definition}
It is denoted that
\begin{align*}
    G_{\mathcal{S}_1^{(i)}, \hat{\boldsymbol{\theta}}^{(i)}} := \tilde{H}_{\min }^{(i)} M_{\mathcal{S}^{(i)}_1}^{(i)} d(\hbthetai, \bthetai), \mathrm{and}\;  G_{\mathcal{S}_1, \hat{\boldsymbol{\theta}}} := \max_{i \in [K]} G_{\mathcal{S}_1^{(i)}, \hat{\boldsymbol{\theta}}^{(i)}}.
\end{align*}
Here, the subscripts indicate the dependence of $G$ on the initial estimator and support estimation of all arms, represented by $\mathcal{S}_1$ and $\hat{\boldsymbol{\theta}}$.
$G_{\mathcal{S}_1, \hat{\boldsymbol{\theta}}} $ is a parameter that can be adaptive to different scenarios using different support estimations $\{\mathcal{S}^{(i)}_1\}_{i=1}^K$ and initial estimators $\{\hat{\boldsymbol{\theta}}\}_{i=1}^K$.
\end{definition}



\subsection{The General Regret Bound}

In this section, we begin by stating the key proposition that forms the foundation for the proof of Theorem~\ref{thm:regret}.

\begin{prop}\label{prop:mu_bound_our}
Let $\Gammati = \Gammati(\hbthetai)$. 
Under Assumptions~\ref{aspt:problem_parameters}, \ref{aspt:prediction_error} and~\ref{aspt:support}, 
Let $\hat{\mu}_t^{(i)}$ be the PWE estimator. Then, with probability at least $1 - O(1/N)$, we have: 
\[
\left|\hat{\mu}_{t}^{(i)}  - \mu_t^{(i)}\right| \leq 
    C \lambda_N M_{\mathcal{S}_1^{(i)}}^{(i)} \tilde{H}_{\min}^{(i)} d(\hbthetai, \bthetai) \polylog(N).
\]
\end{prop}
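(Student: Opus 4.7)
The plan is to pass from $|\hat{\mu}_t^{(i)}-\mu_t^{(i)}|$ to a bound on $|\halphati-\alpha_t^{(i)}|$ and then apply a sharp Lasso analysis to the transformed model \eqref{eqn:transformed_eqn}. Since $\hat{\mu}_t^{(i)}-\mu_t^{(i)} = (\halphati-\alpha_t^{(i)})\cdot\sqrt{N}\|\bxti\|_2^{2}/\|\bfXi\bxti\|_2$, Gaussian concentration for $\|\bxti\|_2^{2}$ and a matrix Bernstein estimate for $\|\bfXi\bxti\|_2$ together give $\sqrt{N}\|\bxti\|_2^{2}/\|\bfXi\bxti\|_2 \asymp \tr(\mathbf{\Sigma}^{(i)}[\mathcal{S}_1^{(i)}])/\|\mathbf{\Sigma}^{(i)}[\mathcal{S}_1^{(i)}]\|_F = M_{\mathcal{S}_1^{(i)}}^{(i)}$ up to $\polylog(N)$ factors on a high-probability event, which produces the $M_{\mathcal{S}_1^{(i)}}^{(i)}$ factor in the claimed bound.

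For the first-coordinate error, I would analyze the Lasso problem \eqref{eqn:PWE} applied to $\byi = \bfZti\bbetati + \boldsymbol{\varepsilon}^{(i)}$. Here $\bbetati = [\alpha_t^{(i)},(\bxiti)^{\top}]^{\top}$ is not exactly sparse: only $\alpha_t^{(i)}$ is the target while $\bxiti$ is at best approximately sparse. For each $k\in[N]$, I would split $\bxiti$ into its $k$ largest-magnitude entries and its tail; the tail norm is controlled by projecting $\bfXi\mathbf{Q}_t^{(i)}(\bthetai-\hbthetai)/\sqrt{N}$ onto the span of the trailing eigenvectors $\boldsymbol{u}_{\mathrm{eg},m}$ with $m\ge k$, and Gaussian concentration together with Assumption~\ref{aspt:problem_parameters}(B)(C) yields a tail estimate of order $d(\hbthetai,\bthetai)\sqrt{\tilde{\lambda}_{k-1}(\mathbf{\Sigma}^{(i)})\,N/(p\,\lambda_{\min}^{+}(\mathbf{\Sigma}^{(i)}))}\,\polylog(N)$. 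Plugging this into a Bickel--Ritov--Tsybakov-style oracle inequality under a restricted eigenvalue (RE) condition on $\bfZti^{\top}\bfZti/N$ gives $|\halphati-\alpha_t^{(i)}| \lesssim \lambda_N\bigl(\sqrt{k} + \sqrt{N-k}\sqrt{\tilde{\lambda}_{k-1}(\mathbf{\Sigma}^{(i)})\,N/(p\,\lambda_{\min}^{+}(\mathbf{\Sigma}^{(i)}))}\,d(\hbthetai,\bthetai)\bigr)\,\polylog(N)$; minimizing over $k\in[N]$ reproduces the factor $\tilde{H}_{\min}^{(i)} d(\hbthetai,\bthetai)$, and multiplying by the scaling factor from the first paragraph gives the claim.

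The main obstacle is verifying the RE condition for $\bfZti$. Unlike a standard i.i.d.\ design, the columns of $\Gammati$ are the eigenvectors of $N^{-1}\bfXi\mathbf{Q}_t^{(i)}(\bfXi)^{\top}$ together with the rescaled $\bar{\boldsymbol{\zeta}}_{\hbthetai}$, so the design is built from the same data that generates the response. The sample-splitting in Sec.~\ref{subsec:pwe_support_basis} makes $\Gammati$ independent of $\byi$ conditional on the second-half contexts, so I would condition on $\bfXi$ and apply Gaussian concentration to $\byi$; to obtain a sharp lower bound on the restricted eigenvalue of $\bfZti^{\top}\bfZti/N$ on the augmented support (the first coordinate plus the top-$k$ coordinates of $\bxiti$), I would combine the near-orthogonality of $\sqrt{N}\boldsymbol{z}_t^{(i)}$ to the remaining columns, which is built in via the $i_0$-swap in the construction of $\Gammati$, with spectral concentration for sub-blocks of $\bfXi\mathbf{Q}_t^{(i)}$. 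I expect this to be the technically demanding step; once the RE condition is in place, the pointwise bound on $|\halphati-\alpha_t^{(i)}|$ follows from the KKT conditions for \eqref{eqn:PWE} combined with the tail estimate above and the choice $\lambda_N \asymp \sigma\sqrt{\log N/N}$ that controls the dual-norm of $\bfZti^{\top}\boldsymbol{\varepsilon}^{(i)}/N$.
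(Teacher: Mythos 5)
Your top-level decomposition is exactly the paper's: write $\hat{\mu}_t^{(i)}-\mu_t^{(i)}=(\halphati-\alpha_t^{(i)})\cdot\sqrt{N}\|\bxti\|_2^2/\|\bfXi\bxti\|_2$, bound the scaling ratio by $M_{\mathcal{S}_1^{(i)}}^{(i)}\polylog(N)$ via concentration (this is the paper's Lemma~\ref{lemma:ratio_bound}, which uses $\chi^2$ concentration conditional on $\bxti$ plus Hanson--Wright for $\bxti^{\top}\mathbf{\Sigma}^{(i)}\bxti\gtrsim\tr((\mathbf{\Sigma}^{(i)})^2)$; your matrix-Bernstein route lands in the same place), and separately bound $|\halphati-\alpha_t^{(i)}|$. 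Where you diverge is the $\alpha$-bound. The paper does \emph{not} verify an RE condition for $\bfZti$; it instead bounds $|\halphati-\alpha_t^{(i)}|\lesssim\lambda_N\,h(\hbthetai)$, where $h(\hbthetai)=\max\{\|\hat{\mathbf{\Gamma}}^{-1}\mathbf{\Gamma}_0\|_1,\|\mathbf{\Gamma}_0^{-1}\hat{\mathbf{\Gamma}}\|_1\}$ measures the $\ell_1$ distortion between the empirical basis $\Gammati(\hbthetai)$ and the oracle basis $\Gammati(\bthetai)$ (imported from Lemma~C.3 of \citet{zhao2023estimation}), and then shows $h(\hbthetai)\lesssim\tilde{H}_{\min}^{(i)}d(\hbthetai,\bthetai)\polylog(N)$ (Lemma~\ref{lemma:h_bound}, made finite-sample via the new Lemma~\ref{lemma:new_bound}). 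Establishing RE for the data-dependent design $\bfZti$ from scratch is precisely the step the paper's route is built to avoid; you correctly identify it as the technically demanding step but leave it entirely unverified, and that is the main gap in your plan.

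There is also an algebraic problem in your oracle-inequality step. You attach the factor $d(\hbthetai,\bthetai)$ only to the tail term, obtaining $\lambda_N\bigl(\sqrt{k}+T_k\,d\bigr)$ with $T_k:=\sqrt{N-k}\sqrt{\tilde{\lambda}_{k-1}(\mathbf{\Sigma}^{(i)})N/(p\lambda_{\min}^{+}(\Sigmai))}$, and claim that minimizing over $k$ reproduces $\tilde{H}_{\min}^{(i)}d$. It does not: $\min_k(\sqrt{k}+T_kd)\neq d\min_k(\sqrt{k}+T_k)$, and when $d$ is small your bound exceeds the target by up to a factor $1/d$. The fix is to use that in the ideal case $\hbthetai=\bthetai$ the nuisance $\bxiti$ is exactly $1$-sparse (proportional to $e_{i_0}$ by the construction in App.~\ref{app:construct_basis}), so the \emph{entire} deviation of $\bxiti$ from that ideal vector is $(\hat{\mathbf{\Gamma}})^{-1}\bfXi\mathbf{Q}_t^{(i)}(\bthetai-\hbthetai)/\sqrt{N}$, whose $\ell_2$ norm is $O(d)$; its top-$k$ block then contributes $\sqrt{k}\,d$ (not $\sqrt{k}$) and its tail $T_k\,d$, which is exactly how the combination $\tilde{H}_k^{(i)}d$ arises. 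One further caution: the independence between $\Gammati$ and the noise that your RE/KKT argument needs holds only if $\hbthetai$ is computed on the held-out half of the data, so your conditioning must be set up on the correct half.
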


We leave the proof of Proposition \ref{prop:mu_bound_our} in Appendix~\ref{sec:appendix_proof_mu}.
\begin{remark}[Relation to \citet{zhao2023estimation}]
\citet{zhao2023estimation} contains limited non-asymptotic results; the component we compare against (PWE for prediction) is asymptotic. 
In particular, their Theorem~4 is the asymptotic counterpart of Proposition~\ref{prop:mu_bound_our}:
\[
\bigl|\hat{\mu}_{t}^{(i)}  - \mu_t^{(i)}\bigr|
= O_p\!\bigl(\lambda_N\, M_{\mathcal{S}_1^{(i)}}^{(i)}\, \tilde{H}_{\min}^{(i)}\, d(\hbthetai, \bthetai)\bigr).
\]
The notation $O_p(\cdot)$ in Theorem~4 indicates convergence in probability (asymptotic behavior), 
whereas Proposition~\ref{prop:mu_bound_our} provides a finite-sample, high-probability bound (deterministic $O(\cdot)$ up to a failure probability $O(1/N)$).
\end{remark}

\begin{proof}[Proof of Theorem~\ref{thm:regret}] 
Our goal is to bound the cumulative regret $R(T)$ of the HOPE algorithm over the time horizon $T$. We decompose the total regret into two parts:
\begin{equation} 
R(T) = R_{\text{exploration}} + R_{\text{exploitation}},
\end{equation}
where $R_{\text{exploration}}$ is the regret incurred during the exploration phase of length $T_0 = N K$, and $R_{\text{exploitation}}$ is the regret accumulated during the exploitation phase from $T_0 + 1$ to $T$.


During the exploration phase, each arm is pulled exactly $N$ times in a round-robin fashion. At each time step $t$, the regret incurred is at most $\Delta_{\max} := \max_{i} \sup_t (\mu_t^{(i^*(t))} - \mu_t^{(i)} )$, where $\mu_t^{(i^*(t))}$ is the expected reward of the optimal arm at time $t$. Under the boundedness assumption of the reward functions (i.e., $\|\bthetai\|_2$ and $\|\bxti\|$ are bounded), $\Delta_{\max}$ is finite. Therefore, the total regret during the exploration phase is bounded by
\begin{align*} 
\mathbb{E}[R_{\text{exploration}}] & \leq  \sum_{t=1}^{T_0} 2\mathbb{E}\left[  \max_{i\in [K]} \langle \boldsymbol{x}_t^{(i)}, \boldsymbol{\theta}^{(i)} \rangle   \right] \\
&\leq  \sum_{t=1}^{T_0} 2 \sqrt{c_2} \text{ ( by Assumption~\ref{aspt:problem_parameters} ) } \\
&\leq T_0 \times 2\sqrt{c_2}
\end{align*}


In the exploitation phase, from time $t = T_0 + 1$ to $T$, the algorithm selects the arm with the highest estimated expected reward based on the PWE estimator computed from the exploration data.


The instantaneous regret at time $t$ is $\mu_t^{(i^*(t))} - \mu_t^{(i(t))}$, where $i(t) = \arg\max_{i} \hat{\mu}_t^{(i)}$ is the arm chosen at time $t$ based on the PWE estimator.


By Proposition~\ref{prop:mu_bound_our}, the estimation error of the predicted rewards satisfies
\[
\left|\hat{\mu}_{t}^{(i)}  - \mu_t^{(i)}\right| \leq 
    C \lambda_N M_{\mathcal{S}_1^{(i)}}^{(i)} \tilde{H}_{\min}^{(i)} d(\hbthetai, \bthetai) \polylog(N),
\]
with probability at least $1 - O(1/N)$. Define
\[
\epsilon_N := C \lambda_N M_{\mathcal{S}_1^{(i)}}^{(i)} \tilde{H}_{\min}^{(i)} d(\hbthetai, \bthetai) \polylog(N).
\]
Let $\mathcal{E}_t$ denote the event that the bound holds for all arms $i \in [K]$ at time $t$:
\[
\mathcal{E}_t = \left\{ \forall i \in [K], \forall t \geq T_0 + 1: \left| \hat{\mu}_t^{(i)} - \mu_t^{(i)} \right| \leq \epsilon_N \right\}.
\]
By a union bound over $K$ arms, we have $\operatorname{Pr}(\mathcal{E}_t) \geq 1 - KO(1/N)$.


Under event $\mathcal{E}_t$, the instantaneous regret at time $t \geq T_0 + 1$ is at most
$2 \epsilon_N$,
since 
$\mu_t^{(i^*(t))} - \epsilon_N \leq  \hat{\mu}_t^{(i^*(t))} \leq \hat{\mu}_t^{(i(t))} \leq \mu_t^{(i(t))} + \epsilon_N$,
and thus,
$\mu_t^{(i^*(t))} - \mu_t^{(i(t))} \leq 2 \epsilon_N$.


When $\mathcal{E}_t$ does not occur, the worst-case instantaneous regret is bounded by $\sqrt{c_2}$. Therefore, the expected regret at time $t$ is
\[
\mathbb{E}[r_t] \leq 2\epsilon_N \times \operatorname{Pr}(\mathcal{E}_t) + \sqrt{c_2} \times \operatorname{Pr}(\mathcal{E}_t^c).
\]


Thus, the expected cumulative regret during the exploitation phase is
\[
\mathbb{E}[R_{\text{exploitation}} ] \leq \sum_{t=T_0+1}^T\mathbb{E}[r_t] \leq 2(T-T_0) \left( C \lambda_N M_{\mathcal{S}_1^{(i)}}^{(i)} \tilde{H}_{\min}^{(i)} d(\hbthetai, \bthetai) \polylog(N) + \sqrt{c_2} O(1/N) \right).
\]


Combining the exploration and exploitation phases, the total expected regret is
\begin{align*}
    \mathbb{E}[R(T)] &= \mathbb{E}[R_{\text{exploration}}] + \mathbb{E}[R_{\text{exploitation}}]\\
    & \leq 4 N K \sqrt{c_2} + 2(T-T_0) \left( C \lambda_N M_{\mathcal{S}_1^{(i)}}^{(i)} \tilde{H}_{\min}^{(i)} d(\hbthetai, \bthetai) \polylog(N) + \sqrt{c_2} O(1/N) \right)\\
    &= O\left(T_0 + G_{\mathcal{S}_1, \hat{\boldsymbol{\theta}}}(T-T_0)\polylog(T)/\sqrt{N}\right).
\end{align*}
Thus, we complete the proof.
\end{proof}

\subsection{Scenario 1: Sparse Model Parameters}
\label{app:proof_scenario_1}

\begin{proof}[Proof of Proposition~\ref{prop:sparse_model}]
We establish the regret bound for the HOPE algorithm in the sparse parameter scenario , where the initial estimator $\hat{\boldsymbol{\theta}}^{(i)}_{\text{Lasso}}$ and the support estimator $\mathcal{S}_{1,\text{Lasso}}$ are obtained using Lasso. Under Assumptions~\ref{aspt:lasso_prediction_error} and ~\ref{aspt:lasso_support} in Appendix~\ref{appendix:lasso_prediction_error}, and by  applying Proposition~\ref{prop:lasso_prediction_error} and Proposition~\ref{prop:lasso_support}, we obtain the following guarantee:
\begin{equation} \label{eq:lasso_error}
d(\hat{\boldsymbol{\theta}}^{(i)}_{\text{Lasso}}, \boldsymbol{\theta}^{(i)}) = O\left( \sqrt{\frac{s_0 \log p}{N}} \right),\mathcal{S}_0\subseteq \mathcal{S}_1, \text{ and } |\mathcal{S}_1| \leq C_1 |\mathcal{S}_0|,
\end{equation}
which holds with probability at least $1-O(1/N)$. This implies that Assumptions~\ref{aspt:prediction_error} and ~\ref{aspt:support} also hold with the same probability. Following the proof technique of Theorem~\ref{thm:regret}, we derive the regret bound for the HOPE algorithm:
\begin{equation*} \label{eq:regret_bound_lasso}
R(T) = O\left( T_0 + G_{\mathcal{S}_{1,\text{Lasso}}, \hat{\boldsymbol{\theta}}_{\text{Lasso}}} (T - T_0) \operatorname{polylog}(N)/\sqrt{N} \right),
\end{equation*}
where $T_0 = N K$ is the length of the exploration phase, and $G_{\mathcal{S}_{1,\text{Lasso}}, \hat{\boldsymbol{\theta}}_{\text{Lasso}}}$ is a parameter determined by the Lasso-based support estimation and initial estimator.

The constant $G_{\mathcal{S}_1, \hat{\boldsymbol{\theta}}}$ encapsulates terms arising from the estimation error $d(\hat{\boldsymbol{\theta}}^{(i)}, \boldsymbol{\theta}^{(i)})$. 

\begin{equation} \label{eq:Gsigma_prop1}
G_{\mathcal{S}_1, \hat{\boldsymbol{\theta}}} \leq C  \max_{i \in [K]} \left( M_{\mathcal{S}_1^{(i)}}^{(i)} \tilde{H}_{\min}^{(i)} d(\hat{\boldsymbol{\theta}}^{(i)}, \boldsymbol{\theta}^{(i)}) \right) \operatorname{polylog}(T),
\end{equation}
where   $M_{\mathcal{S}_1^{(i)}}^{(i)}  = O(\sqrt{s_0})$ and $M_{\mathcal{S}_1^{(i)}}^{(i)}\tilde{H}_{\min}^{(i)} = O(\sqrt{N})$.

Substituting the Lasso estimation error from Equation~\eqref{eq:lasso_error} into $G_{\mathcal{S}_1, \hat{\boldsymbol{\theta}}}$, we obtain the following expression for the total regret $R(T)$ as a function of $N$: \begin{align*} R(N) \leq C' \left( N K + \frac{(T - N K) \sqrt{s_0 \log N} \operatorname{polylog}(T)}{\sqrt{N}} \right), \end{align*} where $C'$ is a constant.

With $N$ chosen such that:
\begin{equation} \label{eq:N_optimization}
N^{3/2} = C' T \sqrt{s_0 } /K,
\end{equation}

Substituting $N$ back into the regret expression, the final regret bound becomes:

\begin{equation}
R(T) = O\left( K^{1/3} s_0^{1/3} T^{2/3} \operatorname{polylog}(T) \right),
\end{equation}
which completes the proof.
\end{proof}

\subsection{Scenario 2: (Approximately) Sparse Eigenvalues of Context Covariance Matrices}
\label{app:proof_scenario_2}
\begin{proof}[Proof of Proposition~\ref{prop:sparse_eigen_a}]

We establish the regret bound for the HOPE algorithm in the scenario of approximately sparse eigenvalues , where the initial estimator $\hat{\boldsymbol{\theta}}^{(i)}_{\text{RDL}}$ is used. We don't use the information of sparse model parameters and take $\mathcal{S}_1^{(i)} = [p]$ for each $i\in [K]$.

This analysis applies when the covariance matrix $\boldsymbol{\Sigma}^{(i)}$ for each arm $i \in [K]$ follows the structure outlined in Example~\ref{example}(A).
Under Assumption~\ref{aspt:rdl_prediction_error} in Appendix~\ref{appendix:lasso_prediction_error}, and by  applying Proposition~\ref{prop:rdl_prediction_error}, we obtain the following guarantee:
\begin{equation} \label{eq:RDL_error_A}
d(\hat{\boldsymbol{\theta}}^{(i)}_{\text{RDL}}, \boldsymbol{\theta}^{(i)}) = O\left( \sqrt{T^a / N+T^{-a}} \right), \mathcal{S}_0\subseteq \mathcal{S}_1,
\end{equation}

which holds with probability at least $1-O(1/N)$. This implies that Assumptions~\ref{aspt:prediction_error} and ~\ref{aspt:support} also hold with the same probability. Following the proof technique of Theorem~\ref{thm:regret}, we derive the regret bound for the HOPE algorithm:
\begin{equation} \label{eq:regret_bound_prop2}
R(T) = O\left( T_0 + G_{\mathcal{S}_{1}, \hat{\boldsymbol{\theta}}_{\text{RDL}}} (T - T_0) \operatorname{polylog}(N)/\sqrt{N} \right),
\end{equation}
where $T_0 = N K$ is the length of the exploration phase, and $G_{\mathcal{S}_{1}, \hat{\boldsymbol{\theta}}_{\text{RDL}}}$ is a parameter determined by the RDL estimator as the initial estimator.

Substituting the RDL estimation error from Equation~\eqref{eq:RDL_error_A} into $G_{\mathcal{S}_{1}, \hat{\boldsymbol{\theta}}_{\text{RDL}}}$, we have:
\begin{equation} \label{eq:Gsigma_prop2}
G_{\mathcal{S}_1, \hat{\boldsymbol{\theta}}} \leq C \max_{i \in [K]} \left( M_{\mathcal{S}_1^{(i)}}^{(i)} \tilde{H}_{\min}^{(i)} d(\hat{\boldsymbol{\theta}}^{(i)}, \boldsymbol{\theta}^{(i)}) \right) \operatorname{polylog}(T).
\end{equation}
To minimize the total regret $R(T)$, we express the regret as a function of $N$:
\begin{align*}
R(N) \leq C' \left( N K + (T - N K)  {p^{\frac{1}{T^\alpha}}} \sqrt{T^a / N+T^{-a}}  \operatorname{polylog}(T) /{\sqrt{N}} \right),
\end{align*}
where $C'$ is a constant.

Let $N$ be chosen such that:
\begin{align}\label{equation:N_scenario2}
    N \asymp \left(\max\left\{K^{-\frac{1}{2}} p^{\frac{1}{2T^a}}T^{\frac{a+2}{4}},  K^{-\frac{2}{3}}p^{\frac{2}{3T^a}}T^{\frac{2-a}{3}}\right\}\right),
\end{align}

Substituting $N$ back into the regret expression, the final regret bound becomes:
\begin{align*}
       R(T)= \tilde{O}\left(\max\left\{K^{\frac{1}{2}} p^{\frac{1}{2T^a}}T^{\frac{a+2}{4}},  K^{\frac{1}{3}}p^{\frac{2}{3T^a}}T^{\frac{2-a}{3}}\right\}\right).
   \end{align*}
Thus, we complete the proof.
\end{proof}

\begin{proof}[Proof of Proposition~\ref{prop:sparse_eigen_b}]
Similar to proof of Proposition~\ref{prop:sparse_eigen_a}
\end{proof}

\subsection{Scenario 3: Both Sparsities}\label{sec:both_appendix_proof}
\begin{definition}
   We say that the eigenvalues of the covariance matrix decay sufficiently fast if the following condition holds:
    \begin{align}\label{eq:H_complementary}
        \tilde{H}_{\min} \leq O(\sqrt{N}\polylog (T) (s_0 \log p )^{-1/2})
    \end{align}
\end{definition}
\begin{proof}[Proof of Proposition~\ref{prop:both}]
We aim to establish the regret bound for the HOPE algorithm in the both sparse scenario with the initial estimator $\hat{\boldsymbol{\theta}}^{(i)}_{\text{Lasso}}$. 

In the setting where both sparsities are present, each true parameter vector $\boldsymbol{\theta}^{(i)}$ has at most $s_0$ non-zero entries, where $s_0 \ll p$. Additionally, $M_{S_1} \tilde{H}_{\min}$ is either slowly increasing with $N$ or remains bounded.

Recall from Theorem~\ref{thm:regret} that the regret of the HOPE algorithm is bounded by:
\begin{equation} \label{eq:regret_bound_prop4}
R(T) = O\left( T_0 + G_{\mathcal{S}_1, \hat{\boldsymbol{\theta}}} (T - T_0) \operatorname{polylog}(N)/\sqrt{N} \right).
\end{equation}

The constant $G_{\mathcal{S}_1, \hat{\boldsymbol{\theta}}}$ encapsulates terms arising from the estimation error $d(\hat{\boldsymbol{\theta}}^{(i)}_{\text{Lasso}}, \boldsymbol{\theta}^{(i)})$.

\begin{equation} \label{eq:Gsigma_prop4}
G_{\mathcal{S}_1, \hat{\boldsymbol{\theta}}} \leq C \max_{i \in [K]} \left( M_{\mathcal{S}_1^{(i)}}^{(i)} \tilde{H}_{\min}^{(i)} d(\hat{\boldsymbol{\theta}}^{(i)}_{\text{Lasso}}, \boldsymbol{\theta}^{(i)}) \right) \operatorname{polylog}(T).
\end{equation}




By substituting Equations~\eqref{eq:lasso_error}, ~\eqref{eq:H_complementary} and~\eqref{eq:Gsigma_prop4} into Equation~\eqref{eq:regret_bound_prop4},  we arrive at the following expression for the total regret \(R(T)\) as a function of \(N\):
\[
R(N) \leq C' \left( N K + \frac{(T - N K)  M  
\operatorname{polylog}(T)}{\sqrt{N}} \right),
\]
where \(C'\) is a constant and $M$ is defined in Section~\ref{sec:sparse arms and eigen}. 

Choosing \(N\) such that:

\[
N^{3/2} = \tilde{O}\left( M {T/K}\right),
\]

and substituting \(N\) back into the regret expression, the final regret bound becomes:

\[
R(T) =  \tilde{O}\left(K^{\frac{1}{3}} M^{\frac{2}{3}} T^{ \frac{2}{3}}\right),
\]

which completes the proof.
\end{proof}

\subsection{Scenario 4: Mixed Sparsities}

\begin{proof}[Proof of Proposition~\ref{prop:mixed}]
Refer to the proof of Prop~\ref{prop:sparse_model} and ~\ref{prop:sparse_eigen_a}, let $N$ be chosen such that:

\begin{align}\label{equation:N_scenario4}
    N \asymp \left(\max\left\{K^{-2/3} s_0^{1/3} T^{2/3} , K^{-\frac{1}{2}} p^{\frac{1}{2T^a}}T^{\frac{a+2}{4}},  K^{-\frac{2}{3}}p^{\frac{2}{3T^a}}T^{\frac{2-a}{3}}\right\}\right),
\end{align}
\[
R_T \leq \lambda_N \max_{i \in [K]} \left( M_{\mathcal{S}_1^{(i)}}^{(i)} \tilde{H}_{\min}^{(i)} d(\hat{\boldsymbol{\theta}}^{(i)}, \boldsymbol{\theta}^{(i)}) \right) \operatorname{polylog}(T).
\]
We now split the maximum over the two parts:
\begin{align*}
    R_T &\leq \lambda_N  
    \max \left[\max_{i \in \text{Part I}} \left( M_{\mathcal{S}_1^{(i)}}^{(i)} \tilde{H}_{\min}^{(i)} d(\hat{\boldsymbol{\theta}}^{(i)}, \boldsymbol{\theta}^{(i)}) \right) , 
 \max_{i \in \text{Part II}} \left( M_{\mathcal{S}_1^{(i)}}^{(i)} \tilde{H}_{\min}^{(i)} d(\hat{\boldsymbol{\theta}}^{(i)}, \boldsymbol{\theta}^{(i)}) \right)\right] \operatorname{polylog}(T).
\end{align*}
Referring to the regret bounds for different scenarios in Proposition~\ref{prop:sparse_model} and Proposition~\ref{prop:sparse_eigen_a}, the conclusion follows immediately.
\end{proof}

\section{Proof of Proposition~\ref{prop:mu_bound_our}}
\label{sec:appendix_proof_mu}

In this section, we provide the proof of Proposition~\ref{prop:mu_bound_our}. To begin, we first establish two lemmas, Lemma~\ref{lemma:alpha_bound} and Lemma~\ref{lemma:ratio_bound}, whose proofs are presented in subsections~\ref{prof:alpha_bound} and~\ref{prof:ratio_bound}, respectively. These lemmas provide essential intermediary results.
\begin{lemma}\label{lemma:alpha_bound}
Under Assumptions~\ref{aspt:problem_parameters}, \ref{aspt:prediction_error} and ~\ref{aspt:support}, with probability at least $1 - O(1/N)$, it holds that: 
\[
\left|\halphati - \alpha_t^{(i)}\right| \leq 
    C \lambda_N  \tilde{H}_{\min}^{(i)} d(\hbthetai, \bthetai) \polylog(N).
\]
This result also holds for $S_1 = [p]$. 
\end{lemma}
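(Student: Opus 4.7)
The plan is to follow the strategy underlying Theorem~4 of \citet{zhao2023estimation}, which gives the same bound asymptotically, but to replace every asymptotic step with an explicit finite-sample tail bound so that the $\polylog(N)$ factor and the failure probability $O(1/N)$ are made quantitative. To do this, I view the PWE estimator as a Lasso in the transformed regression $\byi = \bfZti \bbetati + \boldsymbol{\varepsilon}^{(i)}$ and track how well the first coordinate of $\bbetati$, namely $\alpha_t^{(i)}$, is recovered.

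First, I would write down the KKT conditions for (\ref{eqn:PWE}) and use them to produce the basic inequality
\begin{equation*}
\tfrac{1}{N}\|\bfZti(\hat{\boldsymbol{\beta}}_t^{(i)}-\bbetati)\|_2^2 + \lambdati\|\hat{\boldsymbol{\beta}}_t^{(i)}\|_1 \;\le\; \tfrac{2}{N}(\hat{\boldsymbol{\beta}}_t^{(i)}-\bbetati)^\top\bfZti^\top\boldsymbol{\varepsilon}^{(i)} + \lambdati\|\bbetati\|_1.
\end{equation*}
Since $\bxiti$ is only approximately sparse, I will not use the usual ``exactly sparse'' oracle form; instead, for each candidate index $k\in[N]$ I choose a reference support $S^{\ast}_k\subset[N+1]$ containing the first coordinate (for $\alpha$) and the $k{-}1$ largest entries of $\bxiti$, and then run the compatibility-condition argument with respect to $S^{\ast}_k$, leaving an additive slack equal to $\lambdati\,\|\bxiti-\bxiti_{S^{\ast}_k}\|_1$ from the tail of $\bxiti$. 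Choosing $\lambdati\asymp \sigma\sqrt{\log N / N}$ absorbs the standard noise term once $\max_j|\bfZti_{\cdot,j}^\top\boldsymbol{\varepsilon}^{(i)}|/N$ is controlled by a sub-Gaussian union bound over the $N{+}1$ columns.

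The heart of the argument is then to translate the approximation-tail $\|\bxiti-\bxiti_{S^{\ast}_k}\|_2$ into the product $\tilde{H}^{(i)}_k\,d(\hbthetai,\bthetai)$. For this I will use the identity
\begin{equation*}
\sqrt{N}\,\boldsymbol{\zeta}_t^{(i)} \;=\; \bfXi\bfQti\bthetai \;=\; \bfXi\bfQti\hbthetai \;-\; \bfXi\bfQti(\hbthetai-\bthetai),
\end{equation*}
together with the construction of $\Gammati(\hbthetai)$: its replaced column is exactly the normalized $\boldsymbol{\zeta}_{\hbthetai}$ direction, so applying $(\Gammati)^{-1}$ kills the $\hbthetai$-part and leaves a residual proportional to $(\Gammati)^{-1}\bfXi\bfQti(\hbthetai-\bthetai)/\sqrt{N}$. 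The eigenvector columns of $\Gammati$ then diagonalize $N^{-1}\bfXi\bfQti(\bfXi)^\top$, so the mass of this residual outside the top $k{-}1$ eigendirections is controlled by $\tilde\lambda_{k-1}(\bSigmai)$ (up to concentration of the sample covariance around its population counterpart, which is where Assumption~\ref{aspt:problem_parameters} enters). The $\sqrt{k}$ part of $\tilde H^{(i)}_k$ comes from the inflation of the $\ell_1$ norm on $S^{\ast}_k$ relative to the $\ell_2$ norm, and the $\sqrt{N-k}\sqrt{\tilde\lambda_{k-1}N/(p\lambda_{\min}^+)}$ part from converting the tail $\ell_2$ norm of $\bxiti$ into $\ell_1$ while accounting for the conditioning of the design on the complementary block. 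Optimizing over $k$ yields the factor $\tilde H^{(i)}_{\min}$, and isolating the first coordinate in the resulting oracle inequality yields the claimed bound on $|\halphati-\alpha_t^{(i)}|$.

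The main obstacle will be Step~3, namely verifying a restricted eigenvalue / compatibility condition for $\bfZti$ with a sharp constant and with failure probability $O(1/N)$. Unlike a standard Gaussian design, $\bfZti$ is built from the data-dependent eigenvectors of $N^{-1}\bfXi\bfQti(\bfXi)^\top$ and from $\boldsymbol{z}_t^{(i)}$, which is itself a deterministic function of $\bxti$ and $\bfXi$; hence the columns are not independent and one cannot invoke off-the-shelf restricted-eigenvalue lemmas. I expect to handle this by conditioning on $(\bfXi,\bxti)$, noting that $\bfZti^\top\bfZti/N$ has a block structure with the $\alpha$-block explicitly $\|\bfXi\bxti\|_2^2/(N\|\bxti\|_2^2)$ and the $\xi$-block equal to $\Gammati^\top\Gammati = I$, and controlling the off-diagonal coupling $\boldsymbol{z}_t^{(i)\top}\Gammati$ via the collinearity-mitigation step in the construction of $\Gammati$ together with concentration of sample eigenvalues of $\bSigmai$ under Assumption~\ref{aspt:problem_parameters}. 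Once this finite-sample restricted-eigenvalue bound is in hand, combining it with the oracle inequality and the approximation bound above produces the stated $C\lambdati \tilde H_{\min}^{(i)} d(\hbthetai,\bthetai)\polylog(N)$ rate, and the argument is unchanged when $\mathcal{S}_1=[p]$ since the derivation never uses a nontrivial support restriction on $\bthetai$ beyond what is encoded in $d(\hbthetai,\bthetai)$.
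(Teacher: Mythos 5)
Your route is genuinely different in its decomposition from the paper's, even though both ultimately trace back to the machinery of \citet{zhao2023estimation}. The paper factors the bound through the intermediate quantity $h(\hbthetai)=\max\{\|\hat{\mathbf{\Gamma}}^{-1}\mathbf{\Gamma}_0\|_1,\|\mathbf{\Gamma}_0^{-1}\hat{\mathbf{\Gamma}}\|_1\}$, which compares the data-dependent basis $\Gammati(\hbthetai)$ with the oracle basis $\Gammati(\bthetai)$: Lemma~\ref{lemma:alpha_h} (imported from Lemma~C.3 of \citet{zhao2023estimation}) gives $|\halphati-\alpha_t^{(i)}|\lesssim\lambda_N\,h(\hbthetai)\,\polylog(N)$, and Lemma~\ref{lemma:h_bound} converts $h(\hbthetai)$ into $\tilde H^{(i)}_{\min}\,d(\hbthetai,\bthetai)$. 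You instead propose to rerun the entire Lasso oracle inequality from scratch in the transformed model, handling the approximate sparsity of $\bxiti$ via a per-$k$ reference support $S^{\ast}_k$ and optimizing over $k$ at the end. Your version is more self-contained and makes the origin of the two terms in $\tilde H^{(i)}_k$ transparent (the $\sqrt{k}$ from the $\ell_1$--$\ell_2$ inflation on the head, the second term from the tail of the eigenvalue sequence), and your identity $\sqrt{N}\boldsymbol{\zeta}_t^{(i)}=\bfXi\bfQti\hbthetai-\bfXi\bfQti(\hbthetai-\bthetai)$ together with the observation that $(\Gammati)^{-1}$ annihilates the $\hbthetai$-part is exactly the mechanism that drives the paper's Lemma~\ref{lemma:h_bound}. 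The price is that you must establish a finite-sample compatibility condition for the data-dependent design $\bfZti$ directly, which the paper sidesteps by citing the external lemma.

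Two concrete gaps remain in your plan. First, the compatibility/restricted-eigenvalue step for $\bfZti$ is only flagged as "the main obstacle" and sketched; since the entire oracle inequality collapses without it, your proof is not closed until that lemma is proved with failure probability $O(1/N)$, and your proposed handle (the block structure of $\bfZti^\top\bfZti/N$ plus the collinearity-mitigation in the choice of the replaced column) is plausible but nontrivial because $\boldsymbol{z}_t^{(i)}$ and the eigenvector columns are all functions of the same $\bfXi$. Second, the step "$(\Gammati)^{-1}$ kills the $\hbthetai$-part" silently requires $\|\boldsymbol{\zeta}_{\hbthetai}\|_2=\|N^{-1/2}\bfXi\bfQti\hbthetai\|_2$ to be bounded away from zero (and above) with the stated probability; this two-sided control is the paper's only genuinely new ingredient relative to \citet{zhao2023estimation} (Lemma~\ref{lemma:new_bound}, proved via $\chi^2$ and Hanson--Wright concentration) and is precisely where Assumptions~\ref{aspt:problem_parameters}(A) and~\ref{aspt:prediction_error} enter. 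Your argument cannot produce the factor $d(\hbthetai,\bthetai)$ with the right normalization without an equivalent statement, so you should isolate it as a separate lemma rather than absorbing it into "concentration of the sample covariance."
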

\begin{lemma}\label{lemma:ratio_bound}
    Under Assumptions~\ref{aspt:problem_parameters} and \ref{aspt:prediction_error}, there exists a universal constant $C > 0$ such that the following holds. With probability at least $1 - O(1/N)$, for any time index $t$,
    \begin{align*}
        \frac{\|\boldsymbol{x}_t^{(i)}\|_2^2}{\|\bfXi \boldsymbol{x}_t^{(i)}\|_2^{} N^{-1/2} } \leq C \left[\frac{\operatorname{tr}^2\left(\mathbf{\Sigma}_{\mathcal{S}_1^{ (i)  }}^{ (i)  }\right)}{\operatorname{tr}\left(\mathbf{\Sigma}_{\mathcal{S}_1^{(i)}}^{ (i) \text{ }2}\right)}\right]^{1 / 2} \polylog(N).
    \end{align*}
\end{lemma}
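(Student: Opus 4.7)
The plan is to upper bound the numerator and lower bound the denominator separately by two applications of the Hanson--Wright inequality, exploiting the conditional Gaussian structure that comes from the independence between the new context $\bxti$ and the exploration design $\bfXi$ (which the PWE data split of Sec.~\ref{subsec:pwe_support_basis} guarantees). Throughout, fix an arm $i\in[K]$ and condition on the support $\mathcal{S}^{(i)}_1$ (which is measurable with respect to the first data half and hence independent of $(\bxti,\bfXi)$); write $\boldsymbol{\Sigma}:=\mathbf{\Sigma}^{(i)}_{\mathcal{S}^{(i)}_1}$ for brevity. Since $\bxti\sim\mathcal N(0,\boldsymbol{\Sigma})$ after truncation, we may write $\bxti=\boldsymbol{\Sigma}^{1/2}\boldsymbol{g}$ with $\boldsymbol{g}\sim\mathcal N(0,\mathbf{I}_{s_1^{(i)}})$.

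\textbf{Step 1 (numerator).} Apply the Hanson--Wright inequality to $\|\bxti\|_2^2=\boldsymbol{g}^\top\boldsymbol{\Sigma}\boldsymbol{g}$: with probability at least $1-O(1/N)$,
\begin{equation*}
\bigl|\,\|\bxti\|_2^2-\tr(\boldsymbol{\Sigma})\bigr|\;\le\;C\bigl(\|\boldsymbol{\Sigma}\|_F\sqrt{\log N}+\|\boldsymbol{\Sigma}\|_2\log N\bigr).
\end{equation*}
Using $\|\boldsymbol{\Sigma}\|_F\le\tr(\boldsymbol{\Sigma})$ for PSD matrices and Assumption~\ref{aspt:problem_parameters}(C) to absorb the $\|\boldsymbol{\Sigma}\|_2\log N$ term, this yields $\|\bxti\|_2^2\le C\,\tr(\boldsymbol{\Sigma})\,\polylog(N)$.

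\textbf{Step 2 (denominator, outer randomness).} Condition on $\bxti$. Because the rows of $\bfXi$ are i.i.d.\ $\mathcal N(0,\boldsymbol{\Sigma})$ and independent of $\bxti$, the vector $\bfXi\bxti\in\mathbb R^N$ has i.i.d.\ $\mathcal N(0,v)$ entries with $v:=\bxti{}^{\top}\boldsymbol{\Sigma}\,\bxti$. Thus $\|\bfXi\bxti\|_2^2/v$ is a $\chi^2_N$ variable, and a standard chi-square deviation bound gives $\|\bfXi\bxti\|_2^2\ge \tfrac12 Nv$ with probability at least $1-e^{-cN}$.

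\textbf{Step 3 (denominator, inner randomness).} Now release $\bxti$ and bound $v$ from below. Writing $v=\boldsymbol{g}^\top\boldsymbol{\Sigma}^2\boldsymbol{g}$ with $\mathbb E v=\tr(\boldsymbol{\Sigma}^2)$ and applying Hanson--Wright once more,
\begin{equation*}
|v-\tr(\boldsymbol{\Sigma}^2)|\;\le\;C\bigl(\|\boldsymbol{\Sigma}^2\|_F\sqrt{\log N}+\|\boldsymbol{\Sigma}^2\|_2\log N\bigr)
\end{equation*}
with probability at least $1-O(1/N)$. Since $\|\boldsymbol{\Sigma}^2\|_F^2=\tr(\boldsymbol{\Sigma}^4)\le\|\boldsymbol{\Sigma}\|_2^2\,\tr(\boldsymbol{\Sigma}^2)$ and $\|\boldsymbol{\Sigma}^2\|_2=\|\boldsymbol{\Sigma}\|_2^2$, the crux is to show both error terms are $o(\tr(\boldsymbol{\Sigma}^2))$. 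This is exactly where Assumption~\ref{aspt:problem_parameters}(C) enters: $\|\boldsymbol{\Sigma}\|_F/\|\boldsymbol{\Sigma}\|_2>c_4\log T$ is equivalent to $\|\boldsymbol{\Sigma}\|_2^2<\tr(\boldsymbol{\Sigma}^2)/(c_4\log T)^2$, which makes both $\|\boldsymbol{\Sigma}\|_2\sqrt{\tr(\boldsymbol{\Sigma}^2)\log N}$ and $\|\boldsymbol{\Sigma}\|_2^2\log N$ of order $\tr(\boldsymbol{\Sigma}^2)/\polylog(N)$. Hence $v\ge\tfrac12\tr(\boldsymbol{\Sigma}^2)$ on this event, and combining with Step~2 gives $\|\bfXi\bxti\|_2^2\ge\tfrac14 N\,\tr(\boldsymbol{\Sigma}^2)$ with probability at least $1-O(1/N)$.

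\textbf{Step 4 (assembly).} Taking the ratio of the Step~1 upper bound and the square root of the Step~2--3 lower bound, and applying a union bound over the three high-probability events, yields
\begin{equation*}
\frac{\|\bxti\|_2^2}{\|\bfXi\bxti\|_2\,N^{-1/2}}\;\le\;\frac{C\,\tr(\boldsymbol{\Sigma})\,\polylog(N)}{\sqrt{\tfrac14\tr(\boldsymbol{\Sigma}^2)}}\;\le\;C'\Bigl[\tfrac{\tr^2(\boldsymbol{\Sigma})}{\tr(\boldsymbol{\Sigma}^2)}\Bigr]^{1/2}\polylog(N),
\end{equation*}
which is the claim. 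The main technical obstacle I expect is Step~3: both the ``bias'' $\tr(\boldsymbol{\Sigma}^2)$ and the fluctuation scale $\|\boldsymbol{\Sigma}^2\|_F$ can be small when the spectrum of $\boldsymbol{\Sigma}$ is very spiky, and controlling their ratio requires the eigenvalue-spreadness condition of Assumption~\ref{aspt:problem_parameters}(C); without it, Hanson--Wright alone cannot certify a one-sided lower bound on $v$ with only polylogarithmic slack. A secondary care point is the conditioning on $\mathcal{S}^{(i)}_1$: the bound must hold uniformly, but because $\mathcal{S}^{(i)}_1$ is built from an independent data half, Assumption~\ref{aspt:problem_parameters} applied to the truncated covariance $\boldsymbol{\Sigma}$ transfers immediately once we condition.
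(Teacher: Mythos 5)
Your proposal is correct and follows essentially the same route as the paper's proof: upper-bound $\|\bxti\|_2^2$ by $\tr(\mathbf{\Sigma})\,\polylog(N)$ via Gaussian quadratic-form concentration, condition on $\bxti$ to write $\|\bfXi\bxti\|_2^2$ as $(\bxti{}^\top\mathbf{\Sigma}\bxti)\,\chi^2_N$, lower-bound the quadratic form $\bxti{}^\top\mathbf{\Sigma}\bxti$ by a constant fraction of $\tr(\mathbf{\Sigma}^2)$ via Hanson--Wright together with Assumption~\ref{aspt:problem_parameters}(C), and assemble by a union bound. If anything, your Step~3 is stated more carefully than the paper's (you use the correct tail parameters $\|\mathbf{\Sigma}^2\|_F$ and $\|\mathbf{\Sigma}^2\|_2$ for the quadratic form $\boldsymbol{g}^\top\mathbf{\Sigma}^2\boldsymbol{g}$, where the paper loosely writes $\|\mathbf{\Sigma}\|_F$ and $\|\mathbf{\Sigma}\|_2$), but the argument is the same.
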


\begin{proof}[Proof of Proposition~\ref{prop:mu_bound_our}]
    The proof follows that of Theorem 4 in \cite{zhao2023estimation}, i.e., an error bound in the form of $O_p$, together with standard concentration inequalities to obtain the explicit high-probability guarantee.
    By Lemma~\ref{lemma:alpha_bound} and ~\ref{lemma:ratio_bound}, we establish the result directly using the following bound:
    \begin{align*}
        \left|\hat{\mu}_{t}^{(i)}  - \mu_t^{(i)}\right| 
        &\leq |\halphati - \alpha_t^{(i)} \|\boldsymbol{x}_t^{(i)}\|_2^2 \|\bfXi \boldsymbol{x}_t^{(i)}\|_2^{-1} N^{1/2}\\
        &\leq 
    C \lambda_N \|\boldsymbol{x}_t^{(i)}\|_2^2  \|\bfXi \boldsymbol{x}_t^{(i)}\|_2^{-1} N^{1/2} \tilde{H}_{\min}^{(i)} d(\hbthetai, \bthetai) \polylog(N).
    \end{align*}
\end{proof}

\subsection{Proof of Lemma~\ref{lemma:alpha_bound}}\label{prof:alpha_bound}
To prove Lemma~\ref{lemma:alpha_bound}, we provide the following lemmas, which provide essential intermediary results.
   
\begin{lemma}\label{lemma:alpha_h}
    Denote $\mathbf{\Gamma}_0=\Gammati(\bthetai)$ and $\hat{\mathbf{\Gamma}}= \Gammati(\hbthetai))$ to simplify the notations.
    Under the assumptions of Lemma~\ref{lemma:alpha_bound},  it holds 
    \[
\left|\halphati - \alpha_t^{(i)}\right| \leq 
    C \lambda_N  h(\hbthetai) \polylog(N),
\]
with probability at least $1 - N^{-1}$, where $ h(\hbthetai) = \max \{ \|\hat{\mathbf{\Gamma}}^{-1} \mathbf{\Gamma}_0\|_1,\|\mathbf{\Gamma}_0^{-1} \hat{\mathbf{\Gamma}}\|_1 \} $
\end{lemma}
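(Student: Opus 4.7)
The plan is to treat the Lasso step in the empirical $\hat{\mathbf{\Gamma}}$-basis as estimating a parameter that is approximately sparse \emph{relative to} its oracle counterpart in the unobserved $\mathbf{\Gamma}_0$-basis, and then push a standard Lasso coordinate-wise bound through a linear change of basis. Throughout, I reuse the construction from Sec.~\ref{subsec:sparsing}: in the oracle basis $\mathbf{\Gamma}_0$ the nuisance $\boldsymbol{\xi}_t^{(i),0} := \mathbf{\Gamma}_0^{-1} \boldsymbol{\zeta}_t^{(i)}$ is \emph{exactly} $1$-sparse (supported on the index $i_0$ whose column is $\bar{\boldsymbol{\zeta}}_{\bthetai}$, by the very definition of $\mathbf{\Gamma}_0$), so the oracle parameter $\boldsymbol{\beta}_0 := [\alpha_t^{(i)}, (\boldsymbol{\xi}_t^{(i),0})^\top]^\top$ is $2$-sparse.

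First, using the identity $\mathbf{\Gamma}_0 = \hat{\mathbf{\Gamma}}\,(\hat{\mathbf{\Gamma}}^{-1}\mathbf{\Gamma}_0)$, I would rewrite the linear model $\byi = \sqrt{N}\alpha_t^{(i)}\boldsymbol{z}_t^{(i)} + \sqrt{N}\mathbf{\Gamma}_0 \boldsymbol{\xi}_t^{(i),0} + \boldsymbol{\varepsilon}^{(i)}$ in the empirical basis as $\byi = \bfZti\boldsymbol{\beta}^* + \boldsymbol{\varepsilon}^{(i)}$ with $\boldsymbol{\beta}^* := [\alpha_t^{(i)}, (\hat{\mathbf{\Gamma}}^{-1}\mathbf{\Gamma}_0 \boldsymbol{\xi}_t^{(i),0})^\top]^\top$. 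Since the $\ell_1$-norm of the nuisance piece obeys $\|\hat{\mathbf{\Gamma}}^{-1}\mathbf{\Gamma}_0 \boldsymbol{\xi}_t^{(i),0}\|_1 \leq \|\hat{\mathbf{\Gamma}}^{-1}\mathbf{\Gamma}_0\|_1 \cdot \|\boldsymbol{\zeta}_t^{(i)}\|_2$, the first of the two quantities inside $h(\hbthetai)$ enters directly into $\|\boldsymbol{\beta}^*\|_1$, and hence into the Lasso oracle inequality.

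Second, I would deploy the Lasso basic inequality for $\hat{\boldsymbol{\beta}}_t^{(i)}$ against $\boldsymbol{\beta}^*$,
\[\tfrac{1}{N}\bigl\|\bfZti(\hat{\boldsymbol{\beta}}_t^{(i)} - \boldsymbol{\beta}^*)\bigr\|_2^2 \leq 2\bigl\langle (\bfZti)^\top \boldsymbol{\varepsilon}^{(i)}/N,\, \hat{\boldsymbol{\beta}}_t^{(i)} - \boldsymbol{\beta}^*\bigr\rangle + \lambdati\bigl(\|\boldsymbol{\beta}^*\|_1 - \|\hat{\boldsymbol{\beta}}_t^{(i)}\|_1\bigr),\]
and restrict to the event $\mathcal{E} := \{\|(\bfZti)^\top\boldsymbol{\varepsilon}^{(i)}\|_\infty \leq N\lambdati/2\}$, which reduces the right-hand side to $\lambdati$ times an $\ell_1$-expression of the error and induces the standard cone condition. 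The event $\mathcal{E}$ has probability at least $1-O(1/N)$: every column of $\bfZti = \sqrt{N}[\boldsymbol{z}_t^{(i)}, \hat{\mathbf{\Gamma}}]$ has $\ell_2$-norm $\sqrt{N}$ (since $\|\boldsymbol{z}_t^{(i)}\|_2 = 1$ and each column of $\hat{\mathbf{\Gamma}}$ is unit-norm by the construction in App.~\ref{app:construct_basis}), so subgaussian concentration on $\boldsymbol{\varepsilon}^{(i)}$ together with a union bound over the $N+1$ columns yields $\mathcal{E}$ for the scaling $\lambdati \asymp \sigma\sqrt{\log N/N}$ chosen in Thm.~\ref{thm:regret}.

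Third, I would convert the $\ell_1$-oracle bound into the target coordinate-wise bound via a change of variables back to the oracle basis. Let $\mathbf{T} := \mathrm{diag}(1,\, \mathbf{\Gamma}_0^{-1}\hat{\mathbf{\Gamma}})$. In the reparameterization $\boldsymbol{\beta} \mapsto \mathbf{T}\boldsymbol{\beta}$, the oracle target becomes the $2$-sparse vector $\boldsymbol{\beta}_0$, and the transformed design coincides with $\sqrt{N}[\boldsymbol{z}_t^{(i)}, \mathbf{\Gamma}_0]$, whose Gram matrix is well-conditioned under Assump.~\ref{aspt:problem_parameters} and the orthonormality of the columns of $\mathbf{\Gamma}_0$ (apart from $\bar{\boldsymbol{\zeta}}_{\bthetai}$, which was placed into an index already accounted for in the sparsity pattern of $\boldsymbol{\beta}_0$). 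A compatibility / restricted-eigenvalue argument on this oracle design then yields $\|\mathbf{T}(\hat{\boldsymbol{\beta}}_t^{(i)} - \boldsymbol{\beta}^*)\|_1 \lesssim \lambdati \polylog(N)$, and pulling this back through $\mathbf{T}^{-1}$ contributes the factor $\|\mathbf{\Gamma}_0^{-1}\hat{\mathbf{\Gamma}}\|_1$, completing the second half of $h(\hbthetai)$. Because $|\halphati - \alpha_t^{(i)}|$ is the absolute value of the first coordinate of $\hat{\boldsymbol{\beta}}_t^{(i)} - \boldsymbol{\beta}^*$, it is dominated by this $\ell_1$-bound, which gives the claim.

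The main obstacle will be step three: establishing a compatibility / restricted-eigenvalue condition for $\bfZti$ that survives the basis change and generates only a $\polylog(N)$ overhead. Near-orthogonality between $\boldsymbol{z}_t^{(i)}$ and the columns of $\hat{\mathbf{\Gamma}}$ is guaranteed by design (the eigenvector most aligned with $\boldsymbol{z}_t^{(i)}$ is deliberately replaced by $\bar{\boldsymbol{\zeta}}_{\hbthetai}$, per App.~\ref{app:construct_basis}), but turning this into a quantitative, uniform lower bound on the restricted minimum eigenvalue requires concentration inequalities on the singular spectrum of $\bfXi \mathbf{Q}_t^{(i)}(\bfXi)^\top$ that are sharp under Assumps.~\ref{aspt:problem_parameters}--\ref{aspt:support}. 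This is where most of the technical work will sit, and where the $\polylog(N)$ factor in the final bound is generated.
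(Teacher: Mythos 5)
The paper does not actually prove this lemma internally: its entire ``proof'' is a pointer to Lemma~C.3 of \citet{zhao2023estimation}. Your proposal is therefore being measured against that external argument rather than against anything written in the paper, and at the level of strategy it is a reasonable reconstruction — you correctly identify that the two factors in $h(\hbthetai)$ arise from the two directions of the basis change (the $\|\hat{\mathbf{\Gamma}}^{-1}\mathbf{\Gamma}_0\|_1$ factor from expressing the $1$-sparse oracle nuisance in the empirical basis, and the $\|\mathbf{\Gamma}_0^{-1}\hat{\mathbf{\Gamma}}\|_1$ factor from pulling the error estimate back), and the noise event with $\lambdati \asymp \sigma\sqrt{\log N/N}$ is handled correctly since the columns of $\bfZti$ all have $\ell_2$-norm $\sqrt{N}$.

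There are, however, two genuine gaps. First, your step two invokes ``the standard cone condition,'' but the target $\boldsymbol{\beta}^* = [\alpha_t^{(i)}, (\hat{\mathbf{\Gamma}}^{-1}\mathbf{\Gamma}_0\boldsymbol{\xi}_t^{(i),0})^\top]^\top$ is generically \emph{dense} in the empirical basis, so the exact-sparsity cone argument is vacuous. What is actually needed is the approximately-sparse (slow-rate / oracle-with-bias) form of the Lasso inequality relative to the small set $S=\{1,i_0\}$, in which the additive bias term $\lambdati\|\boldsymbol{\beta}^*_{S^c}\|_1 \lesssim \lambdati\,h(\hbthetai)$ is precisely where the claimed rate comes from — not from a fast-rate bound degraded by a basis change. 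Second, and as you concede yourself, the compatibility or restricted-eigenvalue condition for the design $\sqrt{N}[\boldsymbol{z}_t^{(i)},\mathbf{\Gamma}_0]$ (needed to pass from a prediction-error bound to a bound on the first coordinate $|\halphati-\alpha_t^{(i)}|$) is asserted rather than established; the near-orthogonality of $\boldsymbol{z}_t^{(i)}$ to the retained eigenvector columns and the conditioning of the column $\bar{\boldsymbol{\zeta}}_{\bthetai}$ against $\boldsymbol{z}_t^{(i)}$ must be quantified via concentration of the spectrum of $N^{-1}\bfXi\mathbf{Q}_t^{(i)}(\bfXi)^\top$ under Assumps.~\ref{aspt:problem_parameters}--\ref{aspt:support}. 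This is the substantive content of the cited Lemma~C.3, and without it the proposal is a correct outline but not a proof.
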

\begin{proof}[Proof of Lemma~\ref{lemma:alpha_h}]
    Same as proof of lemma C.3 in \cite{zhao2023estimation}.
\end{proof}

\begin{lemma}\label{lemma:h_bound}
    Under the assumptions of Lemma~\ref{lemma:alpha_bound}, it holds that 
    \begin{align*}
        h(\hbthetai) = \max \{ \|\hat{\mathbf{\Gamma}}^{-1} \mathbf{\Gamma}_0\|_1,\|\mathbf{\Gamma}_0^{-1} \hat{\mathbf{\Gamma}}\|_1 \} \leq C \tilde{H}_{\min}^{(i)} d(\hbthetai, \bthetai) \polylog(N),
    \end{align*}
    with probability at least $1-O(1/N)$.
\end{lemma}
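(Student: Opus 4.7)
The plan is to exploit the fact that $\mathbf{\Gamma}_0$ and $\hat{\mathbf{\Gamma}}$ differ in exactly one column (the $i_0$-th): $\mathbf{\Gamma}_0$ places $\bar{\boldsymbol{\zeta}}_{\bthetai}$ there while $\hat{\mathbf{\Gamma}}$ places $\bar{\boldsymbol{\zeta}}_{\hbthetai}$, with all other columns inherited from the common eigenbasis $\mathbf{\Gamma}_{\mathrm{eg}}$. Thus $\mathbf{\Gamma}_0 - \hat{\mathbf{\Gamma}} = (\bar{\boldsymbol{\zeta}}_{\bthetai} - \bar{\boldsymbol{\zeta}}_{\hbthetai})\, e_{i_0}^{\top}$ is rank one, and
\[
  \hat{\mathbf{\Gamma}}^{-1}\mathbf{\Gamma}_0 \;=\; I \;+\; \hat{\mathbf{\Gamma}}^{-1}(\bar{\boldsymbol{\zeta}}_{\bthetai} - \bar{\boldsymbol{\zeta}}_{\hbthetai})\, e_{i_0}^{\top}.
\]
Since $\|\cdot\|_1$ is the induced $\ell_1\!\to\!\ell_1$ norm, each of the two factors in $h(\hbthetai)$ is controlled by $1 + \|\hat{\mathbf{\Gamma}}^{-1}(\bar{\boldsymbol{\zeta}}_{\bthetai} - \bar{\boldsymbol{\zeta}}_{\hbthetai})\|_1$ and its $\mathbf{\Gamma}_0^{-1}$ counterpart, reducing the lemma to a single vector $\ell_1$ bound (up to a symmetric argument).

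Next, I would control the input to this bound in $\ell_2$. The raw nuisance vectors satisfy $\boldsymbol{\zeta}_{\bthetai} - \boldsymbol{\zeta}_{\hbthetai} = N^{-1/2}\bfXi \mathbf{Q}_t^{(i)}(\bthetai - \hbthetai)$. Conditionally on $\mathbf{Q}_t^{(i)}$, this is a Gaussian vector whose coordinate variance equals $(\bthetai-\hbthetai)^{\top} \mathbf{Q}_t^{(i)} \mathbf{\Sigma}^{(i)}\mathbf{Q}_t^{(i)}(\bthetai-\hbthetai) \le d(\hbthetai,\bthetai)^2$. Standard Gaussian norm concentration therefore yields $\|\boldsymbol{\zeta}_{\bthetai} - \boldsymbol{\zeta}_{\hbthetai}\|_2 \le C\, d(\hbthetai, \bthetai)\polylog(N)$ with probability $1-O(1/N)$, and the bound survives the normalization $\boldsymbol{\zeta} \mapsto \bar{\boldsymbol{\zeta}}$ because Assumption~\ref{aspt:problem_parameters}(A) together with Assumption~\ref{aspt:prediction_error} give matching lower bounds on $\|\boldsymbol{\zeta}_{\bthetai}\|_2$ and $\|\boldsymbol{\zeta}_{\hbthetai}\|_2$ on the same event.

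The central step is to upgrade this $\ell_2$ control on the input to an $\ell_1$ control on $\hat{\mathbf{\Gamma}}^{-1}(\bar{\boldsymbol{\zeta}}_{\bthetai} - \bar{\boldsymbol{\zeta}}_{\hbthetai})$ that scales with $\tilde{H}_{\min}^{(i)}$ rather than the naive $\sqrt{N}/\sqrt{\lambda_{\min}^{+}(\mathbf{\Sigma}^{(i)})}$. I would expand in the sample eigenbasis $\{\boldsymbol{u}_{\mathrm{eg},j}\}$ and split into a head (indices $1,\ldots,k$) and a tail (indices $k{+}1,\ldots,N$): the head contributes $\sqrt{k}$ by Cauchy--Schwarz applied to the $\ell_2$ bound from the previous step, while the tail is controlled by the average eigenvalue $\tilde{\lambda}_{k-1}(\mathbf{\Sigma}^{(i)})$ together with a non-asymptotic Weyl/Davis--Kahan comparison of sample and population spectra of $\mathbf{Q}_t^{(i)}\mathbf{\Sigma}^{(i)}\mathbf{Q}_t^{(i)}$, calibrated against $\lambda_{\min}^{+}(\mathbf{\Sigma}^{(i)})$. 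Summing the two contributions recovers $\tilde{H}_k^{(i)}$, and minimizing over $k\in[N]$ delivers the prefactor $\tilde{H}_{\min}^{(i)}$. The Sherman--Morrison correction that passes from $\mathbf{\Gamma}_{\mathrm{eg}}^{-1}$ to $\hat{\mathbf{\Gamma}}^{-1}$ (and symmetrically to $\mathbf{\Gamma}_0^{-1}$) contributes only lower-order terms since the replaced column is unit-norm and its inner product with $\boldsymbol{u}_{\mathrm{eg},i_0}$ is bounded away from zero with high probability, again by Assumption~\ref{aspt:prediction_error}. A final union bound over the $O(1)$ concentration events yields the stated $1 - O(1/N)$ probability.

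The main obstacle is precisely this last step: extracting the $\tilde{H}_{\min}^{(i)}$ factor \emph{non-asymptotically}, whereas \citet{zhao2023estimation} only provide the analogous result in an $O_p$ sense. This demands sharp $\polylog(N)$-level tail bounds for the sample eigenstructure of $N^{-1}\bfXi\mathbf{Q}_t^{(i)}(\bfXi)^{\top}$ and careful bookkeeping of the dependence of the Sherman--Morrison correction on $\lambda_{\min}^{+}(\mathbf{\Sigma}^{(i)})$. The remaining pieces---rank-one identities, Cauchy--Schwarz, and Gaussian quadratic-form concentration---are essentially computations that can be lifted from the asymptotic proof of the analogous lemma in \citet{zhao2023estimation} once the quantitative concentration rates above are in place.
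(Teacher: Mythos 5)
Your proposal follows essentially the same route as the paper: the paper's proof of this lemma is a one-line deferral to the proof of Lemma~3 in \citet{zhao2023estimation}, with the only new ingredient being the high-probability lower and upper bounds on $\|N^{-1/2}\bfXi\boldsymbol{\theta}_{\mathbf{Q}_{\boldsymbol{x}}}^{(i)}\|_2$ and $\|N^{-1/2}\bfXi\hat{\boldsymbol{\theta}}_{\mathbf{Q}_{\boldsymbol{x}}}^{(i)}\|_2$ (Lemma~\ref{lemma:new_bound}), which is precisely the normalization step you identify via Assumptions~\ref{aspt:problem_parameters}(A) and~\ref{aspt:prediction_error}. Your sketch of the internal structure (rank-one column replacement, $\ell_2$ control of $\boldsymbol{\zeta}_{\bthetai}-\boldsymbol{\zeta}_{\hbthetai}$, head--tail split yielding $\tilde{H}_{\min}^{(i)}$) is a faithful reconstruction of the cited argument, and the non-asymptotic gap you flag is equally present in, and glossed over by, the paper's own proof.
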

\begin{proof}[Proof of Lemma~\ref{lemma:h_bound}]
    The proof framework is the same as the proof of Lemma 3 in \citet{zhao2023estimation}, except that the corresponding parts are replaced by Lemma~\ref{lemma:new_bound}.
\end{proof}

\begin{proof}[Proof of Lemma~\ref{lemma:alpha_bound}]
    The proof of this proposition follows that of Theorem 3 in \citet{zhao2023estimation}, i.e., an error bound in the form of $O_p$, together with standard concentration inequalities to obtain the explicit high-probability guarantee. We can get the proof obviously based on Lemma~\ref{lemma:alpha_h} and Lemma~\ref{lemma:h_bound}.
    
\end{proof}

\begin{lemma}\label{lemma:new_bound}
Under  Assumptions~\ref{aspt:problem_parameters} and \ref{aspt:prediction_error}, with probability at least $1-1/N$, we have 
\begin{align*}
    0 < c_l \leq \| N^{-1/2} \mathbf{X}^{(i)} \boldsymbol{\theta}_{\mathbf{Q}_{\boldsymbol{x}}}^{(i)} \|_2 \leq c_u,\\
    0 < c_l \leq \| N^{-1/2} \mathbf{X}^{(i)} \hat{\boldsymbol{\theta}}_{\mathbf{Q}_{\boldsymbol{x}}}^{(i)} \|_2 \leq c_u,
\end{align*}
where $c_l$ and $c_u$ are constants.
\end{lemma}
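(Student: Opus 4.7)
Both inequalities have the same form, so I carry out the argument for $\bthetai$ and note the trivial modifications for $\hbthetai$. The key structural fact is that, by construction (Alg.~\ref{alg:pwe}), $\bfXi$ is the second half of the per-arm exploration data, hence independent of $\bxti$ (collected at round $t>T_0$) and of the pair $(\hbthetai,\mathcal{S}_1^{(i)})$ (computed from the first half). Conditioning on $\bxti$ therefore freezes $\boldsymbol{\theta}_{\mathbf{Q}_{\boldsymbol{x}}}^{(i)}:=\mathbf{Q}_t^{(i)}\bthetai$, while the rows of $\bfXi$ remain i.i.d.\ $\mathcal{N}(\mathbf{0},\mathbf{\Sigma}^{(i)}[\mathcal{S}_1^{(i)}])$. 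Hence $N^{-1}\|\bfXi\boldsymbol{\theta}_{\mathbf{Q}_{\boldsymbol{x}}}^{(i)}\|_2^2$ equals $N^{-1}\nu^2$ times a $\chi^2_N$ variable, where $\nu^2:=(\boldsymbol{\theta}_{\mathbf{Q}_{\boldsymbol{x}}}^{(i)})^{\top}\mathbf{\Sigma}^{(i)}\boldsymbol{\theta}_{\mathbf{Q}_{\boldsymbol{x}}}^{(i)}$.

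\textbf{Steps 1--2.} The Laurent--Massart inequality gives $\tfrac{1}{2}\nu^2\le N^{-1}\|\bfXi\boldsymbol{\theta}_{\mathbf{Q}_{\boldsymbol{x}}}^{(i)}\|_2^2\le 2\nu^2$ with probability at least $1-2e^{-c^\star N}$ for a universal $c^\star>0$, so it suffices to sandwich $\nu^2$ between two positive constants. Writing $\boldsymbol{u}:=\mathbf{P}_t^{(i)}\bthetai=(\bxti^{\top}\bthetai/\|\bxti\|_2^2)\bxti$, one has
\[
\nu^2=(\bthetai)^{\top}\mathbf{\Sigma}^{(i)}\bthetai-2\boldsymbol{u}^{\top}\mathbf{\Sigma}^{(i)}\bthetai+\boldsymbol{u}^{\top}\mathbf{\Sigma}^{(i)}\boldsymbol{u},
\]
where the first term lies in $[c_1,c_2]$ by Assumption~\ref{aspt:problem_parameters}(A). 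By Cauchy--Schwarz the two correction terms are controlled by $\boldsymbol{u}^{\top}\mathbf{\Sigma}^{(i)}\boldsymbol{u}=(\bxti^{\top}\bthetai)^2(\bxti^{\top}\mathbf{\Sigma}^{(i)}\bxti)/\|\bxti\|_2^4$. I would then apply Gaussian and Hanson--Wright tail bounds separately to the three factors: $(\bxti^{\top}\bthetai)^2\le 2c_2\log N$, $\bxti^{\top}\mathbf{\Sigma}^{(i)}\bxti\le 2\operatorname{tr}((\mathbf{\Sigma}^{(i)})^2)$, and $\|\bxti\|_2^2\ge\tfrac{1}{2}\operatorname{tr}(\mathbf{\Sigma}^{(i)})$, each with failure probability $O(1/N)$; the Hanson--Wright slack follows from Assumption~\ref{aspt:problem_parameters}(C) via the chain $\|\mathbf{\Sigma}\|_*\ge\|\mathbf{\Sigma}\|_F\ge c_4(\log T)\|\mathbf{\Sigma}\|_2$ together with $N\le T$. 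Combining the three,
\[
\boldsymbol{u}^{\top}\mathbf{\Sigma}^{(i)}\boldsymbol{u}\le\frac{Cc_2\log N\cdot\operatorname{tr}((\mathbf{\Sigma}^{(i)})^2)}{\operatorname{tr}(\mathbf{\Sigma}^{(i)})^2}\le\frac{Cc_2\log N}{c_4\log T},
\]
using $\operatorname{tr}((\mathbf{\Sigma}^{(i)})^2)/\operatorname{tr}(\mathbf{\Sigma}^{(i)})^2=\|\mathbf{\Sigma}\|_F^2/\|\mathbf{\Sigma}\|_*^2\le\|\mathbf{\Sigma}\|_2/\|\mathbf{\Sigma}\|_*\le 1/(c_4\log T)$. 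With $N\le T$ and $c_4$ large (as implicitly required by the paper's conventions), the right-hand side is at most $c_1/4$, yielding $\nu^2\in[c_1/2,\,3c_2/2]$. Taking $c_l:=\tfrac{1}{2}\sqrt{c_1}$ and $c_u:=\sqrt{3c_2}$ then discharges the first claim.

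\textbf{Variant for $\hbthetai$ and conclusion.} Repeating Steps~1--2 verbatim with $\bthetai$ replaced by $\hbthetai$, Assumption~\ref{aspt:prediction_error} takes the role of Assumption~\ref{aspt:problem_parameters}(A) and yields $\hbthetai{}^{\top}\mathbf{\Sigma}^{(i)}\hbthetai\in[c_1/2,\,c_2+c_1/2]$. Conditioning additionally on the (independent) $\hbthetai$, the Gaussian tail on $\bxti^{\top}\hbthetai$ uses this bounded conditional variance, while the Hanson--Wright bounds on $\|\bxti\|_2^2$ and $\bxti^{\top}\mathbf{\Sigma}^{(i)}\bxti$ are unchanged. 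A union bound over the $O(1)$ high-probability events then delivers both inequalities simultaneously with failure probability at most $1/N$.

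\textbf{Expected obstacle.} The delicate step is controlling $\boldsymbol{u}^{\top}\mathbf{\Sigma}^{(i)}\boldsymbol{u}$, i.e.\ showing that the rank-one projection $\mathbf{P}_t^{(i)}$ does not substantially contract the $\mathbf{\Sigma}^{(i)}$-weighted norm of $\bthetai$. This is precisely where Assumption~\ref{aspt:problem_parameters}(C) (large effective rank of $\mathbf{\Sigma}^{(i)}$) plays the pivotal role; without the lower bound on $\|\mathbf{\Sigma}\|_F/\|\mathbf{\Sigma}\|_2$, the correction could reach the order of $c_1$ and erase the positive lower constant $c_l$. Equally, care is needed so that the three chained concentration events share a common $O(1/N)$ failure budget, which is why the logarithmic factors produced by the Gaussian and Hanson--Wright tails must be dominated by the $c_4\log T$ gain coming from Assumption~\ref{aspt:problem_parameters}(C).
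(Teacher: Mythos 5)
Your proof is correct and follows essentially the same route as the paper's: condition on $\bxti$, represent $\|N^{-1/2}\mathbf{X}^{(i)}\boldsymbol{\theta}_{\mathbf{Q}_{\boldsymbol{x}}}^{(i)}\|_2^2$ as $\bigl(\boldsymbol{\theta}_{\mathbf{Q}_{\boldsymbol{x}}}^{\top}\mathbf{\Sigma}^{(i)}\boldsymbol{\theta}_{\mathbf{Q}_{\boldsymbol{x}}}\bigr)\,\chi^2_N/N$, concentrate the $\chi^2_N$, sandwich the quadratic form between constants using Assumption~\ref{aspt:problem_parameters}, and rerun the argument for $\hbthetai$ via Assumption~\ref{aspt:prediction_error}. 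The only local difference is that you control the projection correction $\boldsymbol{u}^{\top}\mathbf{\Sigma}^{(i)}\boldsymbol{u}$ directly through the trace ratio $\|\mathbf{\Sigma}\|_F^2/\operatorname{tr}(\mathbf{\Sigma})^2\le 1/(c_4\log T)$, whereas the paper first derives an angle bound $|\boldsymbol{x}^{\top}\boldsymbol{\theta}|\le\alpha\|\boldsymbol{\theta}\|_2\|\boldsymbol{x}\|_2$ from separate lower bounds on $\|\boldsymbol{x}\|_2$ and $\|\boldsymbol{\theta}\|_2$; both variants hinge on Assumption~\ref{aspt:problem_parameters}(C) and on $c_4$ being large enough for the correction to stay below $c_1/4$ -- a quantitative caveat the paper also leaves implicit (its constant $\alpha_0$ merely ``depends on the spectral properties of $\mathbf{\Sigma}$'').
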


\begin{proof}[Proof of Lemma~\ref{lemma:new_bound}]
For the sake of notational simplicity, we omit the superscript $(i)$ in this proof.

\textit{Step 1. Conditioning on $\boldsymbol{x}$}
    \begin{align*}
        \|N^{-1/2} \mathbf{X}\boldsymbol{\theta}_{\mathbf{Q}_{\boldsymbol{x}}}\| & = \|N^{-1/2} \mathbf{X} (\mathbf{I} - \frac{\boldsymbol{x}^\top \boldsymbol{x}}{\|\boldsymbol{x}\|_2^2}) \boldsymbol{\theta}\| 
    \end{align*}
    $\boldsymbol{\theta}_{\mathbf{Q}_{\boldsymbol{x}}} = \boldsymbol{\theta} - \frac{\boldsymbol{x}^\top \boldsymbol{\theta}}{\|\boldsymbol{x}\|_2^2}\boldsymbol{x}$ 
    Since the entries $\boldsymbol{x}_\tau^\top \boldsymbol{\theta}_{\mathbf{Q}_{\boldsymbol{x}}}$ are i.i.d. from $N(0, \boldsymbol{\theta}_{\mathbf{Q}_{\boldsymbol{x}}}^\top \mathbf{\Sigma} \boldsymbol{\theta}_{\mathbf{Q}_{\boldsymbol{x}}})$, we have 
    \begin{align*}
        \|\mathbf{X} \boldsymbol{\theta}_{\mathbf{Q}_{\boldsymbol{x}}}\|^2_2 = \sum_{\tau=1}^{N}(\boldsymbol{x}_\tau^\top \boldsymbol{\theta}_{\mathbf{Q}_{\boldsymbol{x}}})^2 =^{\text{d}} (\boldsymbol{\theta}_{\mathbf{Q}_{\boldsymbol{x}}}^\top \mathbf{\Sigma} \boldsymbol{\theta}_{\mathbf{Q}_{\boldsymbol{x}}}) \chi_N^2
    \end{align*}
    A $\chi_N^2$ random variable with $N$ degrees of freedom concentrates around $N$ via standard tail bounds:
    \begin{align*}
        \operatorname{Pr}\Big[||\chi_N^2 \geq \delta N \Big] \leq 2 \exp(-c N \min(\delta, \delta^2)),
    \end{align*}
    for some absolute constant $c>0$. Setting $\delta=C_0 \frac{\log N}{N} \leq 1-\alpha$ where $\alpha$ is a constant. One can get:
    \begin{align*}
        \operatorname{Pr}\Big[ (1-\delta)N \leq \chi_N^2 \leq (1+\delta)N \Big] \geq 1- \frac{C_1}{N},
    \end{align*}
    Hence, conditioned on $\boldsymbol{\theta}_{\mathbf{Q}_{\boldsymbol{x}}}$, with probability at least $1-\frac{C_1}{N}$, 
    \begin{align*}
        (1-\delta)N(\boldsymbol{\theta}_{\mathbf{Q}_{\boldsymbol{x}}}^\top \mathbf{\Sigma} \boldsymbol{\theta}_{\mathbf{Q}_{\boldsymbol{x}}}) \leq \|\mathbf{X} \boldsymbol{\theta}_{\mathbf{Q}_{\boldsymbol{x}}} \|_2^2 \leq (1+\delta)N(\boldsymbol{\theta}_{\mathbf{Q}_{\boldsymbol{x}}}^\top \mathbf{\Sigma} \boldsymbol{\theta}_{\mathbf{Q}_{\boldsymbol{x}}}).
    \end{align*}
    Taking square‐roots and dividing by $\sqrt{N}$, 
    \begin{align*}
        \sqrt{(1-\delta)} \sqrt{\boldsymbol{\theta}_{\mathbf{Q}_{\boldsymbol{x}}}^\top \mathbf{\Sigma} \boldsymbol{\theta}_{\mathbf{Q}_{\boldsymbol{x}}}} \leq \| N^{-1/2} \mathbf{X} \boldsymbol{\theta}_{\mathbf{Q}_{\boldsymbol{x}}} \|_2 \leq \sqrt{(1+\delta)} \sqrt{\boldsymbol{\theta}_{\mathbf{Q}_{\boldsymbol{x}}}^\top \mathbf{\Sigma} \boldsymbol{\theta}_{\mathbf{Q}_{\boldsymbol{x}}}}.
    \end{align*}
\textit{Step 2. Concentration of $\boldsymbol{\theta}_{\mathbf{Q}_{\boldsymbol{x}}}^\top \mathbf{\Sigma} \boldsymbol{\theta}_{\mathbf{Q}_{\boldsymbol{x}}}$}

\begin{align}\label{equ:Qbeta}
    \boldsymbol{\theta}_{\mathbf{Q}_{\boldsymbol{x}}}^\top \mathbf{\Sigma} \boldsymbol{\theta}_{\mathbf{Q}_{\boldsymbol{x}}} 
    &= \boldsymbol{\theta}^\top \mathbf{\Sigma} \boldsymbol{\theta} + \frac{(\boldsymbol{x}^\top \boldsymbol{\theta})^2}{\|\boldsymbol{x}\|^4_2} \boldsymbol{x}^\top \mathbf{\Sigma} \boldsymbol{x} - 2\frac{\boldsymbol{x}^\top \boldsymbol{\theta}}{\|\boldsymbol{x}\|^2}\boldsymbol{\theta}^\top \mathbf{\Sigma} \boldsymbol{x} \nonumber \\ 
    \boldsymbol{\theta}_{\mathbf{Q}_{\boldsymbol{x}}} &= \boldsymbol{\theta} - <\boldsymbol{\theta}, \frac{\boldsymbol{x}}{\|\boldsymbol{x}\|_2} > \frac{\boldsymbol{x}}{\|\boldsymbol{x}\|_2}
\end{align}
In high dimensional case, when $\boldsymbol{x} \sim \mathbf{\Sigma} $ and $\mathbf{\Sigma}$ satisfies some mild conditions. 

By assumptions, the following three intermediate results are what we need
\begin{align}
    \operatorname{Pr}(\boldsymbol{\theta}^\top \boldsymbol{x} \leq \log N * c_2) \geq 1- \frac{1}{N} \\
    \operatorname{Pr}(\|\boldsymbol{x}\|_2 \leq \sqrt{p}/2 ) \geq 1- \frac{1}{N}  \\
    \operatorname{Pr}(\|\boldsymbol{\theta}\|_2 \geq \frac{\log N}{\sqrt{p}} ) = 1. 
\end{align}

So we have a constant $\alpha \in (0, 1/3)$, so that with probability at least $1-1/N$:
\begin{align*}
    \operatorname{Pr}\left[\left|\boldsymbol{x}^{\top} \boldsymbol{\theta}\right|>\alpha\|\boldsymbol{\theta}\|\|\boldsymbol{x}\|\right] \leq 1/N \quad \text { for some } c_0>0 .
\end{align*}
So with probability at least $1-1/N$, 
\begin{align*}
    \boldsymbol{\theta}_{\mathbf{Q}_{\boldsymbol{x}}}^\top \mathbf{\Sigma} \boldsymbol{\theta}_{\mathbf{Q}_{\boldsymbol{x}}} \geq \alpha_0 \boldsymbol{\theta}^\top \mathbf{\Sigma} \boldsymbol{\theta},
\end{align*}
for some constant $\alpha_0 > 0$ depending on $\alpha$ and on the spectral properties of $\mathbf{\Sigma}$.

\textit{Step 3. Concentration of $\| N^{-1/2} \mathbf{X} \boldsymbol{\theta}_{\mathbf{Q}_{\boldsymbol{x}}} \|_2$}
By a union bound, with probability at least $1-1/N$, we have
\begin{align*}
    \sqrt{(1-\delta)} \sqrt{\alpha_0} \sqrt{\boldsymbol{\theta}^\top \mathbf{\Sigma} \boldsymbol{\theta}} \leq \| N^{-1/2} \mathbf{X} \boldsymbol{\theta}_{\mathbf{Q}_{\boldsymbol{x}}} \|_2 \leq \sqrt{(1+\delta)} \sqrt{1+\alpha_0} \sqrt{\boldsymbol{\theta}^\top \mathbf{\Sigma} \boldsymbol{\theta}}.
\end{align*}
Then we can get with probability at least $1-1/N$, we have
\begin{align*}
    0 < c_l \leq \| N^{-1/2} \mathbf{X} \boldsymbol{\theta}_{\mathbf{Q}_{\boldsymbol{x}}} \|_2 \leq c_u,
\end{align*}
where $c_l$ and $c_u$ are constants.

\textit{Step 4. Concentration of $\| N^{-1/2} \mathbf{X} \hat{\boldsymbol{\theta}}_{\mathbf{Q}_{\boldsymbol{x}}} \|_2$}

By Assumption~\ref{aspt:prediction_error}, the estimation error satisfies
\begin{align}
    0 \leq \|\hat{\boldsymbol{\theta}}^\top \mathbf{\Sigma} \hat{\boldsymbol{\theta}}   -  {\boldsymbol{\theta}}^\top \mathbf{\Sigma} {\boldsymbol{\theta}}   \|  \leq d \leq \min(c_1, c_2) /2 
\end{align}
with probability at least $1- 1/N$.

Thus, with probability at least $1- 1/N$:
\begin{align}\label{equ:hatbeta}
    \|\hat{\boldsymbol{\theta}}^\top \mathbf{\Sigma} \hat{\boldsymbol{\theta}} \| \geq  c_1 - d \geq c_1 / 2, 
\end{align}

Then we have that with probability at least $1- 1/N$,
\begin{align*}
    \|\hat{\boldsymbol{\theta}}\|_2^2 \geq \frac{1}{\lambda_{\max}} |\hat{\boldsymbol{\theta}}^\top \mathbf{\Sigma} \hat{\boldsymbol{\theta}}| \overset{\eqref{equ:hatbeta}}{>} \frac{1}{\lambda_{\max}} \frac{c_1}{2} \overset{\lambda_{\max} \leq \frac{p}{\log N}}{\geq} \frac{\log N}{ p }.
\end{align*}

Following steps analogous to Steps 1-3, we conclude that with probability at least $1-1/N$,
\begin{align*}
    0 < c_l \leq \| N^{-1/2} \mathbf{X} \hat{\boldsymbol{\theta}}_{\mathbf{Q}_{\boldsymbol{x}}} \|_2 \leq c_u,
\end{align*}
This completes the proof.
\end{proof}

\subsection{Proof of Lemma~\ref{lemma:ratio_bound}}\label{prof:ratio_bound}

\begin{proof}[Proof of Lemma~\ref{lemma:ratio_bound}]


For the sake of notational simplicity, we omit the subscript: $\mathcal{S}_1^{(i)}$.

\medskip
\noindent
\textit{Step 1. Boundedness of $\|\boldsymbol{x}_t^{(i)}\|_2^2$.}  

We have:
\[
   \mathbb{E}\bigl[\|\boldsymbol{x}_t^{(i)}\|_2^2\bigr]
   \;=\;
   tr(\mathbf{\Sigma}^{(i)}).
\]
Using the fact that a Gaussian vector $\boldsymbol{x}_t^{(i)} \sim \mathcal{N}(0,\mathbf{\Sigma}^{(i)})$ admits the representation $\boldsymbol{x}=\Sigma^{1/2}\boldsymbol{z}$ with $\boldsymbol{z}\sim \mathcal{N}(0,I)$, one finds
\[
   \|\boldsymbol{x}_t^{(i)}\|_2^2
   \;=\;
   \|{\mathbf{\Sigma}^{(i)}}^{1/2}\mathbf{z}\|_2^2
   \;=\;
   \sum_{j=1}^p 
   \lambda_j \,(z_j^2),
\]
where $\lambda_1,\dots,\lambda_p$ are the eigenvalues of $\mathbf{\Sigma}^{(i)}$. Since $\mathbb{E}[\boldsymbol{z}_j^2]=1$, we obtain 
$\mathbb{E}[\|\boldsymbol{x}_t^{(i)}\|_2^2]=tr(\mathbf{\Sigma}^{(i)})$. Furthermore, $\chi^2$ concentration inequality ensures that for \emph{high probability} (at least $1 - O(1/N)$), 
\[
   \bigl|\|\boldsymbol{x}_t^{(i)}\|_2^2 - tr(\mathbf{\Sigma}^{(i)})\bigr|
   \;\le\;
   \delta\, tr(\mathbf{\Sigma}^{(i)}),
\]
where $\delta>0$ can be chosen so that $\exp\bigl(-c p \,\delta^2\bigr)\approx 1/N$, thus $\delta$ scales roughly like $\sqrt{(\log N)/p}$. Hence we can absorb the deviation factor into a $\polylog(N)$ term (and a universal constant). Concretely,
\[
   \|\boldsymbol{x}_t^{(i)}\|_2^2
   \;\le\;
   (1+\delta)\,tr(\mathbf{\Sigma}^{(i)})
   \;\le\;
   C_1 \,tr\!\Bigl(\mathbf{\Sigma}^{(i)}\Bigr)\,\polylog(N),
\]
for some absolute constant $C_1>0$ and with probability at least $1 - O\bigl(\tfrac1{N}\bigr)$.


\medskip
\noindent
\textit{Step 2. Controlling $\|\bfXi\,\boldsymbol{x}_t^{(i)}\|_2 / \sqrt{N}$.}

Condition on the vector $\boldsymbol{x}_t^{(i)}$. By our assumptions, 
\emph{each row} of $\bfXi$ is drawn from $N(\mathbf{0},\mathbf{\Sigma}^{(i)})$, 
independently of other rows. Let $\boldsymbol{x}_\tau^\top$ be the $i$-th row of $\bfXi$. 
Then, for a \emph{fixed} $\boldsymbol{x}_t^{(i)}$, we observe:
\[
   \boldsymbol{x}_\tau^\top\,\boldsymbol{x}_t^{(i)}
   \;\sim\;
   N\!\Bigl(0,\;\boldsymbol{x}_t^{(i)\top}\,\mathbf{\Sigma}_{  }^{(i)}\,
            \boldsymbol{x}_t^{(i)}\Bigr).
\]
In other words, each scalar $\boldsymbol{x}_\tau^\top\,\boldsymbol{x}_t^{(i)}$ is a Gaussian with variance 
$\boldsymbol{x}_t^{(i)\top}\,\mathbf{\Sigma}_{}^{(i)}\,\boldsymbol{x}_t^{(i)}$, and these 
$N$ scalars are i.i.d.\ given $\boldsymbol{x}_t^{(i)}$. Hence,
\[
  \|\bfXi\,\boldsymbol{x}_t^{(i)}\|_2^2
  \;=\;
  \sum_{i=1}^N\!\bigl(\mathbf{z}_i^\top\,\boldsymbol{x}_t^{(i)}\bigr)^2
  \;\overset{d}{=}\;
  \bigl(\boldsymbol{x}_t^{(i)\top}\,\mathbf{\Sigma}_{  }^{(i)}\,
    \boldsymbol{x}_t^{(i)}\bigr)
  \;\chi_N^2,
\]
where $\chi_N^2$ denotes a chi-square random variable with $N$ degrees of freedom.

\medskip

\noindent
\emph{Concentration argument.}  
A $\chi_N^2$ variable concentrates around $N$, so with high probability,
\[
   \chi_N^2 \;\approx\; N \;\;\bigl(1 \pm O(\tfrac{1}{\sqrt{N}})\bigr).
\]
Therefore, with probability at least $1 - O\bigl(\tfrac1N\bigr)$,
\[
  \|\bfXi\,\boldsymbol{x}_t^{(i)}\|_2^2
  \;=\;
  \bigl(\boldsymbol{x}_t^{(i)\top}\, \mathbf{\Sigma}_{   }^{(i)}\,
    \boldsymbol{x}_t^{(i)}\bigr)\,\chi_N^2
  \;\le\;
  C\,N\,\boldsymbol{x}_t^{(i)\top}\, \mathbf{\Sigma}_{   }^{(i)}\,
    \boldsymbol{x}_t^{(i)},
\]
for some absolute constant $C>0$.  
Taking square roots gives the desired statement on
$\|\bfXi\,\boldsymbol{x}_t^{(i)}\|_2 / \sqrt{N}$.  

\vspace{2pt}

\noindent


By Hanson-Wright inequality:
    \begin{align*}
        \operatorname{Pr}(\boldsymbol{x}_t^{(i)\top} \mathbf{\Sigma}^{(i)} \boldsymbol{x}_t^{(i)} - \mathbb{E}(\boldsymbol{x}_t^{(i)\top} \mathbf{\Sigma}^{(i)}\boldsymbol{x}_t^{(i)}) < -t) \leq  \exp\left(-c  \min \left( \frac{t^2}{\|\mathbf{\Sigma}^{(i)}\|_F^2}, \frac{t}{\|\mathbf{\Sigma}^{(i)}\|_2}\right) \right).
    \end{align*}
    We let $ \|\mathbf{\Sigma}^{(i)}\|_F \sqrt{\frac{\ln N}{c}}  \leq t\leq \frac{\|\mathbf{\Sigma}^{(i)}\|_F^2}{\|\mathbf{\Sigma}^{(i)}\|_2}$, which is well defined by Assumption~\ref{aspt:problem_parameters}.Then, we get
   \begin{align*}
        \operatorname{Pr}\left(\boldsymbol{x}^\top \mathbf{\Sigma}^{(i)}\boldsymbol{x} - tr(\mathbf{\Sigma^2} ) \right) < \|\mathbf{\Sigma}^{(i)}\|_F \sqrt{\frac{\ln N}{c}} ) \leq  1/N.
    \end{align*}
    \begin{align*}
        \operatorname{Pr}\left(\boldsymbol{x}^\top \mathbf{\Sigma}^{(i)}\boldsymbol{x} <  tr(\mathbf{\Sigma^2} ) \left(1- \frac{1}{\|\mathbf{\Sigma}^{(i)}\|_F} \sqrt{\frac{\ln N}{c}} \right) \right)  \leq  1/N.
    \end{align*}
    So
    \begin{align*}
        \operatorname{Pr}\left(\boldsymbol{x}^\top \mathbf{\Sigma}^{(i)}\boldsymbol{x} <  tr({\mathbf{\Sigma}^{(i)}}^2 )\tau\right)  \leq  1/N.
    \end{align*}
    where $0 < \tau < 1$.
Thus, in large‐dimension or sub‐Gaussian cases, $\boldsymbol{x}_t^{(i)\!\top}\,\Sigma_{   }^{(i)}\,\boldsymbol{x}_t^{(i)}$ itself is on the order of $\mathrm{tr}\!\Bigl((\Sigma_{   }^{(i)})^2\Bigr)$, up to the usual polylog factors.  Therefore,
\[
  \|\bfXi\,\boldsymbol{x}_t^{(i)}\|_2 \bigl/ \sqrt{N}
  \;\;\ge\;
  \frac{1}{C_2}\,\sqrt{\mathrm{tr}\bigl(\bigl(\Sigma_{   }^{(i)}\bigr)^2\bigr)}
  \,\bigl/\polylog(N)
\]
with probability at least $1 - O\bigl(\tfrac1N\bigr)$, where $C_2>0$ is another universal constant.

\medskip
\noindent
\textit{Step 3. Conclude the ratio bound.}  
Combining the above:

\[
   \frac{\|\boldsymbol{x}_t^{(i)}\|_2^2}{
     \|\bfXi\,\boldsymbol{x}_t^{(i)}\|_2/\sqrt{N}
   }
   \;\;\le\;
   \frac{\,C_1\,\mathrm{tr}\!\Bigl(\Sigma_{   }^{(i)}\Bigr)\,\polylog(N)\,}{
     \frac1{C_2}\,\sqrt{\mathrm{tr}\Bigl(\bigl(\Sigma_{   }^{(i)}\bigr)^2\Bigr)}\,/\,\polylog(N)
   }
   \;=\;
   \bigl(C_1\,C_2\bigr)
   \,\sqrt{
    \frac{\bigl(\mathrm{tr}(\Sigma_{   }^{(i)})\bigr)^2}{
      \mathrm{tr}\Bigl((\Sigma_{   }^{(i)})^2\Bigr)
    }}
   \;[\polylog(N)]^{2}.
\]
We absorb $[\polylog(N)]^{2}$ into a single $\polylog(N)$ factor, and set $C = C_1\,C_2$ (both are universal constants).  Hence, with probability at least $1 - O(1/N)$,
\[
  \frac{\|\boldsymbol{x}_t^{(i)}\|_2^2}{\|\bfXi \,\boldsymbol{x}_t^{(i)}\|_2\,N^{-1/2}}
  \;\;\le\;
  C\,
  \sqrt{\frac{\operatorname{tr}^2\!\bigl(\mathbf{\Sigma}_{   }^{(i)}\bigr)}{
          \operatorname{tr}\!\Bigl(\bigl(\mathbf{\Sigma}_{   }^{(i)}\bigr)^2\Bigr)}
       }
  \,\polylog(N).
\]
This completes the proof.
\end{proof}

\section{Additional Discussion on Parameter-Awareness}
\label{app:parameter_awareness}

In this section, we present the theoretical results for the agnostic version of HOPE across the four scenarios.

\begin{prop}[Sparse Model Parameters (parameter-agnostic version)]\label{prop:sparse_model_parameter_agnostic}
   With Lasso as the initial estimator and also Lasso to perform the support estimation, using $N \asymp K^{-2/3} T^{2/3}$, under Assumption~\ref{aspt:problem_parameters} and the conditions in Appendix~\ref{appendix:lasso_prediction_error}, \ref{appendix:lasso_variable_selection}  for the guarantee of Lasso, the regret of HOPE is bounded as 
   \begin{align*}
       R(T)= O\big(K^{\frac{1}{3}} s_0^{\frac{1}{2}} T^{\frac{2}{3}}\polylog(T)\big).
   \end{align*}
\end{prop}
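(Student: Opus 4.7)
The plan is to reuse essentially verbatim the proof of Prop.~\ref{prop:sparse_model} in App.~\ref{app:proof_scenario_1}, since the algorithm and its hypotheses are unchanged; only the choice of $N$ differs, so only the final balancing step needs to be redone with the $s_0$-independent schedule $N \asymp K^{-2/3} T^{2/3}$.

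First, I would invoke Thm.~\ref{thm:regret} to obtain the general decomposition
$$R(T) = O\!\left( T_0 + G_{\mathcal{S}_1, \hat{\boldsymbol{\theta}}}\, (T - T_0)\, \polylog(T)/\sqrt{N}\right),$$
with $T_0 = NK$. Next, as in the proof of Prop.~\ref{prop:sparse_model}, I would apply the Lasso prediction-error and variable-selection guarantees referenced in Apps.~\ref{appendix:lasso_prediction_error}--\ref{appendix:lasso_variable_selection} to conclude that, on an event of probability $1 - O(1/N)$, Assumps.~\ref{aspt:prediction_error} and~\ref{aspt:support} hold and
$$d(\hat{\boldsymbol{\theta}}^{(i)}_{\text{Lasso}}, \boldsymbol{\theta}^{(i)}) = O\!\bigl(\sqrt{s_0 \log p / N}\bigr), \qquad |\mathcal{S}_1^{(i)}| \leq C_1 s_0.$$
Exactly as in Eq.~\eqref{eq:Gsigma_prop1}, this yields $M^{(i)}_{\mathcal{S}_1^{(i)}} = O(\sqrt{s_0})$, $M^{(i)}_{\mathcal{S}_1^{(i)}} \tilde{H}^{(i)}_{\min} = O(\sqrt{N})$, and hence
$$G_{\mathcal{S}_1, \hat{\boldsymbol{\theta}}} \;\lesssim\; \sqrt{s_0 \log p}\,\polylog(T).$$

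Finally, I would substitute $N \asymp K^{-2/3}\, T^{2/3}$ into the above. The exploration contribution is $T_0 = N K \asymp K^{1/3} T^{2/3}$, while the exploitation contribution is
$$\frac{(T - T_0)\sqrt{s_0 \log p}\,\polylog(T)}{\sqrt{N}} \;=\; O\!\left( K^{1/3}\, s_0^{1/2}\, T^{2/3}\, \polylog(T)\right),$$
and summing the two pieces gives the advertised bound. The $s_0^{1/2}$ exponent (rather than the $s_0^{1/3}$ exponent of Prop.~\ref{prop:sparse_model}) arises because the joint optimization in Eq.~\eqref{eq:N_optimization} that produces the $s_0^{1/3}$ rate requires knowing $s_0$; without that knowledge, the exploration length is balanced only in $T$ and $K$, inflating the exploitation term by an $s_0^{1/6}$ factor.

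The main technical point (rather than a genuine obstacle) is ensuring that the Lasso guarantees remain valid under the agnostic choice of $N$: the standard high-dimensional Lasso conditions require $N \gtrsim s_0 \log p$, which translates here to the mild regime assumption $s_0 \log p = o(K^{-2/3} T^{2/3})$. This holds in the high-dimensional sparse regime considered throughout the paper (where $s_0 \ll p$ and $p \gtrsim T$) and does not require any assumptions beyond those already imposed in Prop.~\ref{prop:sparse_model}.
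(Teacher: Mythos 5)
Your proposal is correct and follows exactly the route the paper intends: the paper's own proof of this proposition is a one-line reference to the proof of Prop.~\ref{prop:sparse_model}, and you carry out precisely that argument, re-deriving only the final balancing step with the $s_0$-agnostic choice $N \asymp K^{-2/3}T^{2/3}$, which gives $T_0 \asymp K^{1/3}T^{2/3}$ and an exploitation term of $K^{1/3}s_0^{1/2}T^{2/3}\polylog(T)$ as claimed. Your closing remark correctly identifies the source of the $s_0^{1/6}$ inflation relative to the tuned rate.
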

\begin{proof}[Proof of Proposition~\ref{prop:sparse_model_parameter_agnostic}]
    Similar to proof of Proposition~\ref{prop:sparse_model} in Section~\ref{app:proof_scenario_1}.
\end{proof}


\begin{prop}[Sparse Eigenvalues of $\boldsymbol{\Sigma}$: Example~\ref{example}(A) (parameter-agnostic version)]\label{prop:sparse_eigen_a_agnostic}
    With RDL as the initial estimator and  
   $\mathcal{S}_1^{(i)} = [p]$ for all arms, using 
   $N \asymp \max\{K^{-\frac{1}{2}} T^{\frac{1}{2}},  K^{-\frac{2}{3}}T^{\frac{1}{3}}\}$
   under Assumption~\ref{aspt:problem_parameters} and the conditions in Appendix~\ref{appendix:RDL_prediction_error} for the guarantee of RDL, if the covariance matrices satisfy Example~\ref{example}(A), the regret of HOPE is bounded as
   \begin{align*}
       R(T)= \tilde{O}\big(\max\big\{K^{\frac{1}{2}} p^{\frac{1}{T^a}}T^{\frac{a+1}{2}},  K^{\frac{1}{3}}p^{\frac{1}{T^a}}T^{\frac{3-2a}{3}}\big\}\big).
   \end{align*}
   \vspace{-0.2in}
\end{prop}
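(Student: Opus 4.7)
The plan is to follow the template of the proof of Proposition~\ref{prop:sparse_eigen_a} in Appendix~\ref{app:proof_scenario_2} verbatim, and substitute a parameter-agnostic choice of the exploration length $N$ only at the final tuning step. Concretely, I would first invoke Theorem~\ref{thm:regret} with RDL as the initial estimator and $\mathcal{S}_1^{(i)} = [p]$ for every arm, reducing the task to controlling
\begin{align*}
G_{\mathcal{S}_1,\hat{\boldsymbol{\theta}}_{\text{RDL}}} \;\lesssim\; \max_{i\in[K]} M_{\mathcal{S}_1^{(i)}}^{(i)}\,\tilde{H}_{\min}^{(i)}\,d(\hat{\boldsymbol{\theta}}^{(i)}_{\text{RDL}},\boldsymbol{\theta}^{(i)})\,\polylog(T).
\end{align*}
Proposition~\ref{prop:rdl_prediction_error} then supplies $d(\hat{\boldsymbol{\theta}}^{(i)}_{\text{RDL}},\boldsymbol{\theta}^{(i)}) = O(\sqrt{T^a/N + T^{-a}})$ under Example~\ref{example}(A), and the spectral bookkeeping already carried out in the proof of Proposition~\ref{prop:sparse_eigen_a} gives an extra factor of order $p^{1/T^a}$ in the product $M^{(i)}_{\mathcal{S}_1^{(i)}}\tilde{H}_{\min}^{(i)}$ when $\mathcal{S}_1^{(i)} = [p]$. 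This reproduces the same regret-as-a-function-of-$N$ expression as in the parameter-aware case:
\begin{align*}
R(N) \;\lesssim\; NK + \frac{(T-NK)\,p^{1/T^a}\,\sqrt{T^a/N + T^{-a}}}{\sqrt{N}}\,\polylog(T).
\end{align*}

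The second step is to plug in the universal choice $N \asymp \max\{K^{-1/2}T^{1/2},\,K^{-2/3}T^{1/3}\}$, which depends on neither $a$ nor $p$, and simplify term by term. The exploration cost is $NK \lesssim K^{1/2}T^{1/2}$, which is dominated by the exploitation terms in the high-dimensional regime of interest. The two exploitation contributions correspond to the two summands inside the square root defining $d$: substituting $N = K^{-1/2}T^{1/2}$ into the $\sqrt{T^a/N}$ piece yields the first argument $K^{1/2}p^{1/T^a}T^{(a+1)/2}$ of the $\max$; substituting $N = K^{-2/3}T^{1/3}$ into the $T^{-a/2}$ piece yields, after absorbing $\polylog$ factors, the second argument $K^{1/3}p^{1/T^a}T^{(3-2a)/3}$. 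Taking the maximum over the two gives the stated bound, and since the agnostic $N$ is in fact the larger of the two candidates, one verifies that each exploitation contribution is indeed dominated at the claimed rate.

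The main obstacle is conceptual rather than technical: a parameter-agnostic $N$ cannot simultaneously balance exploration against both exploitation summands, because the balancing point for each depends on the unknown decay rate $a$ and on $p$. The way around this is precisely to analyze each summand separately against one of the two agnostic scalings and then bound the overall regret by the maximum; this is why the final rate is a $\max$ of two terms rather than a single balanced one. A secondary care point is the $p^{1/T^a}$ overhead, which must be argued to remain benign: as already noted after Proposition~\ref{prop:sparse_eigen_a}, for fixed $a > 0$ one has $p^{c/T^a} \to 1$ as $T\to\infty$, and under the mild growth condition $\log p \lesssim T^a \log T$ this factor collapses to a polynomial in $T$, so the agnostic bound remains effectively polynomial in $(K,T)$ and sublinear in $T$, which closes the argument.
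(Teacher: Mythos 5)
Your proposal is correct and takes essentially the same route as the paper, whose own proof is a one-line deferral to the template of Proposition~\ref{prop:sparse_eigen_a}: invoke Theorem~\ref{thm:regret} with the RDL error bound, form $R(N)\lesssim NK + (T-NK)\,p^{1/T^a}\sqrt{T^a/N+T^{-a}}\,\polylog(T)/\sqrt{N}$, and substitute the agnostic $N$ at the final tuning step. One small arithmetic note: plugging $N\asymp K^{-2/3}T^{1/3}$ (or the actual maximizer $K^{-1/2}T^{1/2}$) into the $T^{-a/2}$ piece gives exponents $5/6-a/2$ (resp.\ $3/4-a/2$ with prefactor $K^{1/4}$), both of which are \emph{at most} the stated $1-2a/3$ for $a\in(0,1)$ rather than equal to it, so the claimed rate is a valid but non-tight upper bound --- consistent with your closing remark that each contribution is "dominated at the claimed rate."
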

\begin{proof}[Proof of Proposition~\ref{prop:sparse_eigen_a_agnostic}]
    Similar to proof of Proposition~\ref{prop:sparse_eigen_a} in Section~\ref{app:proof_scenario_2}.
\end{proof}

\begin{prop}[Sparse Eigenvalues of $\boldsymbol{\Sigma}$: Example~\ref{example}(B) (parameter-agnostic version)]\label{prop:sparse_eigen_b_agnostic}
   With RDL as the initial estimator and  
   $\mathcal{S}_1^{(i)} = [p]$ for all arms, using $N \asymp  K^{-\frac{1}{2}} T^{ \frac{1}{2} } $, under Assumption~\ref{aspt:problem_parameters} and the additional conditions specified in Appendix~\ref{appendix:RDL_prediction_error} for the guarantee of RDL, if the covariance matrices satisfy Example~\ref{example}(B), the regret of HOPE is bounded as 
   \begin{align*}
       R(T)= \tilde{O}\big(   K^{\frac{1}{2}} T^{ \frac{1}{2} + \frac{3c(1-b)}{2}}  \big).
   \end{align*}
\end{prop}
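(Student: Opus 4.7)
The plan is to apply the general regret decomposition of Theorem~\ref{thm:regret} with the agnostic choice $N \asymp K^{-1/2} T^{1/2}$, reusing the RDL-specific bounds already assembled in the (parameter-aware) Proposition~\ref{prop:sparse_eigen_b}, and then to verify algebraically that with this suboptimal $N$ the exploitation term collapses to the claimed $\tilde{O}(K^{1/2} T^{1/2 + 3c(1-b)/2})$ rate.

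First, I would bound the exploration cost: $T_0 = 2NK \asymp K^{1/2} T^{1/2}$, which is strictly dominated by the target rate since $c(1-b) > 0$ under Example~\ref{example}(B). Next, I would invoke the RDL prediction-error guarantee of Appendix~\ref{appendix:RDL_prediction_error} (exactly as used in Proposition~\ref{prop:sparse_eigen_b}) to control $d(\hat{\boldsymbol{\theta}}^{(i)}_{\text{RDL}}, \boldsymbol{\theta}^{(i)})$, and combine it with the spectral quantities $M_{\mathcal{S}_1^{(i)}}^{(i)}$ and $\tilde H_{\min}^{(i)}$ as in Proposition~\ref{prop:sparse_eigen_b}. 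Under Example~\ref{example}(B) with $p = O(T^c)$ and $\lambda_k(\mathbf{\Sigma}^{(i)}) = k^{-b}$, the relevant traces satisfy $\operatorname{tr}(\mathbf{\Sigma}^{(i)}) \asymp p^{1-b} \asymp T^{c(1-b)}$ (and analogous expressions for $\|\mathbf{\Sigma}^{(i)}\|_F$), so the composite quantity $G_{\mathcal{S}_1, \hat{\boldsymbol{\theta}}}$ can be written as a closed-form polynomial in $T$, $K$, and $N$.

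Substituting $N \asymp K^{-1/2} T^{1/2}$ into the exploitation term $G_{\mathcal{S}_1, \hat{\boldsymbol{\theta}}} (T - T_0) \polylog(T)/\sqrt{N}$ and simplifying, the two competing bounds that appear in Proposition~\ref{prop:sparse_eigen_b} (namely the $T^{1/2 + c(2-b)/4}$ and $T^{1/2 + 3c(1-b)/4}$ branches) are no longer balanced: with the agnostic $N$, the second branch inflates to the stated $T^{1/2 + 3c(1-b)/2}$ exponent, and this is the dominating contribution. Combining with the exploration bound yields the advertised $\tilde{O}(K^{1/2} T^{1/2 + 3c(1-b)/2})$.

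The main obstacle is the purely arithmetic one of carrying through the exponents correctly: because we no longer tune $N$ to match the spectral decay, the cancellation that produced the $c(2-b)/4$ or $3c(1-b)/4$ exponents in the original proposition breaks, and one must track carefully how each factor of $N^{-1/2}$ interacts with the $T^{c(1-b)}$-type traces from Example~\ref{example}(B). A secondary subtlety is to confirm that Assumption~\ref{aspt:prediction_error} remains satisfied at the agnostic $N$ so that Theorem~\ref{thm:regret} still applies; this is routine and follows the verification in Appendix~\ref{appendix:RDL_prediction_error} with mild modification to the sample size.
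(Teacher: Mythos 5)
Your proposal follows exactly the route the paper takes: the paper's own proof of this proposition is a one-line reference back to Propositions~\ref{prop:sparse_eigen_b} and~\ref{prop:sparse_eigen_a}, i.e., apply the general bound of Theorem~\ref{thm:regret} with the RDL guarantee from Appendix~\ref{appendix:RDL_prediction_error}, instantiate the spectral quantities under Example~\ref{example}(B), and substitute the (now agnostic, unbalanced) choice $N \asymp K^{-1/2}T^{1/2}$ into the exploration and exploitation terms. Your sketch reproduces this plan faithfully, including the observation that the exploration term $T_0 \asymp K^{1/2}T^{1/2}$ is dominated and that the loss relative to the tuned version comes purely from the broken balance in the exploitation term.
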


\begin{proof}[Proof of Proposition~\ref{prop:sparse_eigen_b_agnostic}]
    Similar to proof of Proposition~\ref{prop:sparse_eigen_b} in Section~\ref{app:proof_scenario_2}.
\end{proof}

\begin{prop}[Both Sparsities (parameter-agnostic version)]\label{prop:both_agnostic}
With Lasso as the initial estimator and also Lasso to perform the support estimation, using $N \asymp K^{-2/3} T^{2/3}$, under Assumption~\ref{aspt:problem_parameters} and the conditions in Appendix~\ref{appendix:lasso_prediction_error}, \ref{appendix:lasso_variable_selection} for the guarantees of Lasso,  if the eigenvalues of covariance matrices $\boldsymbol{\Sigma}^{(i)}$ for all $i\in [K]$ decay sufficiently fast (e.g., Example~\ref{example}; see Appendix~\ref{sec:both_appendix_proof} for details), the regret of HOPE is bounded as 
\begin{align*}
    R(T)= \tilde{O}\big(K^{\frac{1}{3}} M^{} T^{\frac{2}{3}}\big).
\end{align*}
\end{prop}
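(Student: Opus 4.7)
The plan is to follow the template of Proposition~\ref{prop:both} but with the sparsity-agnostic choice $N \asymp K^{-2/3} T^{2/3}$, which does not depend on knowledge of $M$ or $s_0$. I would start from Theorem~\ref{thm:regret}'s general decomposition
\begin{align*}
R(T) = O\!\left(NK + G_{\mathcal{S}_1, \hat{\boldsymbol{\theta}}}(T - NK)\polylog(T)/\sqrt{N}\right),
\end{align*}
so the task reduces to (i) controlling the data-dependent constant $G_{\mathcal{S}_1, \hat{\boldsymbol{\theta}}} = \max_i M^{(i)}_{\mathcal{S}_1^{(i)}} \tilde{H}^{(i)}_{\min} d(\hat{\boldsymbol{\theta}}^{(i)}, \boldsymbol{\theta}^{(i)})$ uniformly over arms and (ii) substituting the agnostic $N$.

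For step (i), I would invoke the Lasso prediction-error and support-recovery guarantees from Apps.~\ref{appendix:lasso_prediction_error}--\ref{appendix:lasso_variable_selection}, which give $d(\hat{\boldsymbol{\theta}}^{(i)}_{\mathrm{Lasso}}, \boldsymbol{\theta}^{(i)}) = O(\sqrt{s_0 \log p / N})$ together with $\mathcal{S}_0^{(i)} \subseteq \mathcal{S}_1^{(i)}$ and $|\mathcal{S}_1^{(i)}| \le C_1 s_0$ with high probability, thereby verifying Assumps.~\ref{aspt:prediction_error} and~\ref{aspt:support}. The hypothesis that the covariance eigenvalues decay sufficiently fast (Eq.~\eqref{eq:H_complementary}) then yields $\tilde{H}^{(i)}_{\min} \le O(\sqrt{N}\polylog(T)(s_0 \log p)^{-1/2})$. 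Multiplying the three factors, the $\sqrt{s_0 \log p}$ and $\sqrt{N}$ terms telescope, leaving $G_{\mathcal{S}_1, \hat{\boldsymbol{\theta}}} \le O(M \polylog(T))$, which is notably independent of both $N$ and $s_0$. This cancellation—already the mechanism powering Proposition~\ref{prop:both}—is what makes agnostic tuning possible.

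Substituting back produces $R(T) = O(NK) + \tilde{O}(T M / \sqrt{N})$. With $N \asymp K^{-2/3} T^{2/3}$, the exploration term becomes $K^{1/3} T^{2/3}$, while the exploitation term becomes $T \cdot M \cdot K^{1/3} T^{-1/3} = K^{1/3} M T^{2/3}$, which dominates for $M \gtrsim 1$ and yields the advertised rate $\tilde{O}(K^{1/3} M T^{2/3})$. Compared to the sparsity-aware Proposition~\ref{prop:both}'s rate $\tilde{O}(K^{1/3} M^{2/3} T^{2/3})$, the agnostic choice pays exactly one extra $M^{1/3}$ factor, which is the standard price for not balancing $N$ against $M$. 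The main obstacle will be verifying that the agnostic $N$ is still large enough for both the Lasso sample-complexity regime (roughly $N \gtrsim s_0 \log p$) and the concentration inequalities underlying Eq.~\eqref{eq:H_complementary} to kick in simultaneously; this amounts to a mild polynomial growth condition on $T$ relative to $K$, $s_0$, and $\log p$, which is absorbed into the $\tilde{O}$ notation.
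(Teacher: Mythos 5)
Your proposal is correct and follows essentially the same route the paper takes: the paper proves Proposition~\ref{prop:both_agnostic} implicitly by reusing the argument of Proposition~\ref{prop:both} (the cancellation $M\,\tilde{H}_{\min}\,d(\hat{\boldsymbol{\theta}},\boldsymbol{\theta}) = O(M\,\polylog(T))$ via Eq.~\eqref{eq:H_complementary} and the Lasso rate), then substitutes the agnostic $N \asymp K^{-2/3}T^{2/3}$ into $R(N)\le C'(NK + TM\polylog(T)/\sqrt{N})$, giving the dominant term $K^{1/3}MT^{2/3}$ exactly as you compute. Your observation that the agnostic tuning costs an extra $M^{1/3}$ relative to the aware choice, and your caveat about $N$ being large enough for the Lasso regime, are both consistent with the paper's treatment.
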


\begin{prop}[Mixed Sparsity]\label{prop:mixed_agnostic}  
With Lasso as the initial estimator and also Lasso to perform the support estimation for arms in Part I, and RDL as the initial estimator and $\mathcal{S}_1^{(i)} = [p]$ for arms in Part II, using $N \asymp K^{-2/3} T^{2/3}$, 
under Assumption~\ref{aspt:problem_parameters} and the conditions in Appendix~\ref{appendix:lasso_prediction_error}, \ref{appendix:lasso_variable_selection}, \ref{appendix:RDL_prediction_error} for the guarantees of Lasso and RDL, if the covariance matrices of arms in Part II satisfies Example~\ref{example}(A), the regret of HOPE is bounded as   
\begin{equation*}
    R(T) = \tilde{O}\big(\max\big\{K^{\frac{1}{3}}s_0^{\frac{1}{3}} T^{\frac{2}{3}}, K^{\frac{1}{2}} p^{\frac{1}{T^a}}T^{\frac{a+1}{2}},  K^{\frac{1}{3}}p^{\frac{1}{T^a}}T^{\frac{3-2a}{3}} \big\} \big).
\end{equation*}  
\end{prop}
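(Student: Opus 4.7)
My plan is to derive Proposition~\ref{prop:mixed} by invoking the general regret bound in Theorem~\ref{thm:regret} and then specializing the per-arm estimator guarantees to each group. Let $\mathcal{I}_1$ denote the Part~I arms (sparse parameters, Lasso) and $\mathcal{I}_2$ the Part~II arms (sparse eigenvalues, RDL with $\mathcal{S}_1^{(i)}=[p]$). Since $G_{\mathcal{S}_1, \hat{\boldsymbol{\theta}}} = \max_{i \in [K]} G_{\mathcal{S}_1^{(i)}, \hat{\boldsymbol{\theta}}^{(i)}}$ is an arm-wise maximum and Assumptions~\ref{aspt:prediction_error} and~\ref{aspt:support} are stated per arm, the two groups decouple cleanly inside the bound:
\begin{equation*}
G_{\mathcal{S}_1,\hat{\boldsymbol{\theta}}} = \max\Big\{\max_{i \in \mathcal{I}_1} G_{\mathcal{S}_1^{(i)}, \hat{\boldsymbol{\theta}}^{(i)}},\; \max_{i \in \mathcal{I}_2} G_{\mathcal{S}_1^{(i)}, \hat{\boldsymbol{\theta}}^{(i)}}\Big\}.
\end{equation*}
I can therefore control each inner maximum with the same calculations that produced the two homogeneous results, without any cross-talk between the groups.

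\textbf{Importing per-group bounds.} For $i \in \mathcal{I}_1$ I would recycle the Lasso step from the proof of Proposition~\ref{prop:sparse_model}: the Lasso guarantees in Appendices~\ref{appendix:lasso_prediction_error}--\ref{appendix:lasso_variable_selection} yield an exploitation contribution of order $\tilde{O}(T\sqrt{s_0}/\sqrt{N})$, which when balanced against the exploration cost $2NK$ is optimized at $N_1 \asymp K^{-2/3} s_0^{1/3} T^{2/3}$ and produces regret $\tilde{O}(K^{1/3} s_0^{1/3} T^{2/3})$. For $i \in \mathcal{I}_2$ I would recycle the RDL step from the proof of Proposition~\ref{prop:sparse_eigen_a}: the RDL bound of Appendix~\ref{appendix:RDL_prediction_error}, combined with Example~\ref{example}(A), gives two competing exploitation branches, whose individually optimal exploration lengths $N_2 \asymp K^{-1/2} p^{1/(2T^a)} T^{(a+2)/4}$ and $N_3 \asymp K^{-2/3} p^{2/(3T^a)} T^{(2-a)/3}$ produce regrets $\tilde{O}(K^{1/2} p^{1/(2T^a)} T^{(a+2)/4})$ and $\tilde{O}(K^{1/3} p^{2/(3T^a)} T^{(2-a)/3})$ respectively. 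These are exactly the three terms appearing inside the claimed maximum.

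\textbf{Balancing a single exploration budget.} Because one common $N$ must serve all arms, I would set $N \asymp \max\{N_1, N_2, N_3\}$, which is exactly Eq.~\eqref{equation:N_scenario4}. The key observation is that every exploitation component is strictly decreasing in $N$, while the exploration cost $2NK$ is strictly increasing; enlarging $N$ beyond a given component's optimum therefore only drives that component below its individually optimized level, leaving the exploration cost to match, up to constants, the largest of the three individually optimized rates. Substituting the resulting $N$ into Theorem~\ref{thm:regret} and taking the maximum of the three rates gives the stated bound, and the $(B)$ case of Example~\ref{example} follows identically by swapping in the scales from Proposition~\ref{prop:sparse_eigen_b}. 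The main obstacle I foresee is precisely this balancing step: verifying that sharing a single exploration length across heterogeneous arm groups does not produce a cross-term exceeding the maximum of the individual rates. The monotonicity argument above resolves it, which is ultimately why the proof reduces to a direct combination of the two homogeneous analyses.
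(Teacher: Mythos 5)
You have proved the wrong proposition. Your argument targets Proposition~\ref{prop:mixed} (the parameter-aware mixed-sparsity bound), whereas the statement at hand is Proposition~\ref{prop:mixed_agnostic}, the parameter-\emph{agnostic} version. The mismatch is concrete on both ends. On the input side, the statement fixes $N \asymp K^{-2/3}T^{2/3}$, a single choice that deliberately does not depend on $s_0$, $p$, or $a$; your proof instead sets $N \asymp \max\{N_1,N_2,N_3\}$ as in Eq.~\eqref{equation:N_scenario4}, where $N_1 \asymp K^{-2/3}s_0^{1/3}T^{2/3}$ and $N_2,N_3$ involve $p^{1/(2T^a)}$ and $a$ --- exactly the scenario-tuned choice the agnostic proposition is designed to avoid, and one that presupposes knowledge of the sparsity level and the spectral decay. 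On the output side, your three rates $K^{1/3}s_0^{1/3}T^{2/3}$, $K^{1/2}p^{1/(2T^a)}T^{(a+2)/4}$, $K^{1/3}p^{2/(3T^a)}T^{(2-a)/3}$ are not ``exactly the three terms appearing in the claimed maximum'': the statement's Part~II terms are $K^{1/2}p^{1/T^a}T^{(a+1)/2}$ and $K^{1/3}p^{1/T^a}T^{(3-2a)/3}$, with different powers of both $p$ and $T$ (these are the degraded rates of the agnostic Scenario~2 result, Proposition~\ref{prop:sparse_eigen_a_agnostic}, not the optimized rates of Proposition~\ref{prop:sparse_eigen_a}). Consequently the ``balancing a single exploration budget'' step that you identify as the central obstacle is solving a problem this proposition does not pose.

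What the agnostic statement calls for is your (correct) decomposition $G_{\mathcal{S}_1,\hat{\boldsymbol{\theta}}} = \max\bigl\{\max_{i\in\mathcal{I}_1} G_{\mathcal{S}_1^{(i)},\hat{\boldsymbol{\theta}}^{(i)}},\, \max_{i\in\mathcal{I}_2} G_{\mathcal{S}_1^{(i)},\hat{\boldsymbol{\theta}}^{(i)}}\bigr\}$ followed by substitution of the \emph{fixed}, structure-independent $N$ into the Lasso and RDL exploitation expressions and the exploration cost $NK$, i.e.\ a combination of the two agnostic homogeneous results (Propositions~\ref{prop:sparse_model_parameter_agnostic} and~\ref{prop:sparse_eigen_a_agnostic}) rather than of Propositions~\ref{prop:sparse_model} and~\ref{prop:sparse_eigen_a}; no optimization over $N$ across groups occurs. (Two caveats: had the target been Proposition~\ref{prop:mixed}, your argument would essentially coincide with the paper's proof, which likewise splits the maximum over Part~I and Part~II and imports the two homogeneous analyses, and your monotonicity justification for taking $N$ as the largest of the tuned values is a sensible way to make that step explicit. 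Also, the paper's own statement is not fully self-consistent here --- plugging $N\asymp K^{-2/3}T^{2/3}$ into the Part~I term yields $s_0^{1/2}$ rather than the printed $s_0^{1/3}$, and the quoted Part~II rates were derived under a different agnostic $N$ --- but those discrepancies are in the paper, not introduced by you.)
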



\section{Additional Techniques}\label{app:add_tech}

\subsection{Theory for Lasso Prediction Error}
\label{appendix:lasso_prediction_error}

In this section, we derive a theoretical upper bound for the prediction error of the Lasso estimator in a high-dimensional linear regression setting.



\begin{assumption}[]\label{aspt:lasso_prediction_error}
    1. There exists a constant \(\kappa > 0\) such that for all vectors \(\boldsymbol{\delta} \in \mathbb{R}^p\) satisfying \(\|\boldsymbol{\delta}_{\mathcal{S}_0^c}\|_1 \leq 3 \|\boldsymbol{\delta}_{\mathcal{S}_0}\|_1\), where \(\mathcal{S}_0\) is the support of \(\boldsymbol{\theta}\) with \(|\mathcal{S}_0^{(i)}| \leq s_0\), the following holds: $\frac{1}{N} \|\mathbf{X}^{(i)} \boldsymbol{\delta}\|_2^2 \geq \kappa \|\boldsymbol{\delta}_S\|_2^2.$ 2. $\lambda \asymp \sigma \sqrt{\frac{\log p}{N}}$, where \(\sigma^2\) is the variance of the noise.
\end{assumption}

\begin{remark}
   Assumption~\ref{aspt:lasso_prediction_error} is invoked in Propositions~\ref{prop:sparse_model}, \ref{prop:both}, and \ref{prop:mixed} to provide a standard framework for analyzing Lasso estimation performance, as illustrated in Proposition~\ref{prop:lasso_prediction_error}. We stress that this assumption is solely employed to derive the Lasso estimator's error bound in Proposition~\ref{prop:lasso_prediction_error}, which subsequently propagates to Propositions~\ref{prop:sparse_model}, \ref{prop:both}, and \ref{prop:mixed}. Beyond this, Assumption~\ref{aspt:lasso_prediction_error} plays no further role in the theoretical analysis of these propositions. 
The literature presents well-established alternatives for deriving Lasso error bounds, such as the Restricted Eigenvalue (RE) condition or the compatibility condition, widely adopted in Lasso bandit studies (\cite{li2022simple, bastani2020online}). Any of these conditions could readily replace Assumption~\ref{aspt:lasso_prediction_error} without affecting our core results. Crucially, since the primary contribution of our methodology lies in the PWE algorithm framework—where Lasso serves merely as one possible initialization tool—the specific assumptions governing Lasso’s estimation properties are peripheral to our theoretical focus.
A parallel argument applies to Assumption~\ref{aspt:lasso_support}: its sole purpose is to enable the support recovery guarantees in Proposition~\ref{prop:lasso_support}, and it plays no further role in the proofs or conclusions of the aforementioned propositions. Notably, various support estimation methods (e.g., SIS\cite{fan2008sure}, Knockoff\cite{candes2018panning}) exist, each with their own standard conditions to achieve the desired theoretical guarantees. These could readily substitute Assumption~\ref{aspt:lasso_support}, but as this lies beyond the scope of our work, we omit further discussion.
\end{remark}

\begin{prop}\label{prop:lasso_prediction_error}
    Under Assumptions~\ref{aspt:lasso_prediction_error}, we have 
    \begin{equation} \label{eq:lasso_error_3}
    d(\hat{\boldsymbol{\theta}}^{(i)}_{\text{Lasso}}, \boldsymbol{\theta}^{(i)}) = O\left( \sqrt{\frac{s_0 \log p}{N}} \right),
    \end{equation}
\end{prop}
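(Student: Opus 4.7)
}
The plan is to follow the classical Lasso oracle-inequality argument and then translate the resulting empirical prediction error into the population prediction error $d(\cdot,\cdot)$ defined via the covariance $\mathbf{\Sigma}^{(i)}$. Throughout, I abbreviate $\hat{\boldsymbol{\delta}} := \hat{\boldsymbol{\theta}}^{(i)}_{\text{Lasso}} - \boldsymbol{\theta}^{(i)}$ and omit the arm index $(i)$ where unambiguous.

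First, I would write down the basic inequality obtained from the optimality of $\hat{\boldsymbol{\theta}}^{(i)}_{\text{Lasso}}$ in the Lasso objective: comparing the value at $\hat{\boldsymbol{\theta}}$ with the value at the truth $\boldsymbol{\theta}^{(i)}$ and expanding $\boldsymbol{y}^{(i)} = \mathbf{X}^{(i)}\boldsymbol{\theta}^{(i)} + \boldsymbol{\varepsilon}^{(i)}$ yields
\[
\frac{1}{N}\|\mathbf{X}^{(i)}\hat{\boldsymbol{\delta}}\|_2^2 \;\le\; \frac{2}{N}(\boldsymbol{\varepsilon}^{(i)})^\top \mathbf{X}^{(i)}\hat{\boldsymbol{\delta}} + \lambda\bigl(\|\boldsymbol{\theta}^{(i)}\|_1 - \|\hat{\boldsymbol{\theta}}^{(i)}\|_1\bigr).
\]

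Second, I would control the stochastic term by a union bound over the $p$ coordinates of $(\mathbf{X}^{(i)})^\top \boldsymbol{\varepsilon}^{(i)}$. Since each column of $\mathbf{X}^{(i)}$ is Gaussian with bounded variance (Assumption~\ref{aspt:problem_parameters}) and $\boldsymbol{\varepsilon}^{(i)}$ is independent sub-Gaussian noise, a standard Gaussian tail bound gives $\frac{1}{N}\|(\mathbf{X}^{(i)})^\top \boldsymbol{\varepsilon}^{(i)}\|_\infty \lesssim \sigma\sqrt{\log p / N}$ with probability at least $1 - O(1/N)$ (by tuning the constant in the exponential tail). Together with the prescribed choice $\lambda \asymp \sigma\sqrt{\log p / N}$, this forces $\frac{2}{N}|(\boldsymbol{\varepsilon}^{(i)})^\top \mathbf{X}^{(i)}\hat{\boldsymbol{\delta}}| \le \tfrac{\lambda}{2}\|\hat{\boldsymbol{\delta}}\|_1$.

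Third, by decomposing $\|\hat{\boldsymbol{\delta}}\|_1 = \|\hat{\boldsymbol{\delta}}_{\mathcal{S}_0}\|_1 + \|\hat{\boldsymbol{\delta}}_{\mathcal{S}_0^c}\|_1$ and using $\|\boldsymbol{\theta}^{(i)}\|_1 - \|\hat{\boldsymbol{\theta}}^{(i)}\|_1 \le \|\hat{\boldsymbol{\delta}}_{\mathcal{S}_0}\|_1 - \|\hat{\boldsymbol{\delta}}_{\mathcal{S}_0^c}\|_1$, the non-negativity of the left-hand side yields the usual cone condition $\|\hat{\boldsymbol{\delta}}_{\mathcal{S}_0^c}\|_1 \le 3\|\hat{\boldsymbol{\delta}}_{\mathcal{S}_0}\|_1$. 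Applying the restricted eigenvalue condition in Assumption~\ref{aspt:lasso_prediction_error}(1), together with the Cauchy--Schwarz bound $\|\hat{\boldsymbol{\delta}}_{\mathcal{S}_0}\|_1 \le \sqrt{s_0}\|\hat{\boldsymbol{\delta}}_{\mathcal{S}_0}\|_2$, then combining with the basic inequality, gives
\[
\kappa\|\hat{\boldsymbol{\delta}}_{\mathcal{S}_0}\|_2^2 \;\le\; \frac{1}{N}\|\mathbf{X}^{(i)}\hat{\boldsymbol{\delta}}\|_2^2 \;\lesssim\; \lambda\sqrt{s_0}\,\|\hat{\boldsymbol{\delta}}_{\mathcal{S}_0}\|_2,
\]
from which I conclude both $\tfrac{1}{N}\|\mathbf{X}^{(i)}\hat{\boldsymbol{\delta}}\|_2^2 \lesssim s_0\lambda^2/\kappa \asymp s_0 \sigma^2 \log p / (N\kappa)$ and $\|\hat{\boldsymbol{\delta}}\|_2 \lesssim \sqrt{s_0}\lambda/\kappa$.

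The main obstacle is the last step: converting the \emph{empirical} prediction seminorm $\frac{1}{N}\|\mathbf{X}^{(i)}\hat{\boldsymbol{\delta}}\|_2^2$ into the \emph{population} quantity $d(\hat{\boldsymbol{\theta}}^{(i)},\boldsymbol{\theta}^{(i)})^2 = \hat{\boldsymbol{\delta}}^\top \mathbf{\Sigma}^{(i)} \hat{\boldsymbol{\delta}}$. A naive bound through the operator norm $\lambda_1(\mathbf{\Sigma}^{(i)})$ is too lossy under Assumption~\ref{aspt:problem_parameters}(B), which only controls $\lambda_1$ by $p/\log T$. The clean route is to exploit the cone membership of $\hat{\boldsymbol{\delta}}$: on the sparse cone $\{\boldsymbol{u}: \|\boldsymbol{u}_{\mathcal{S}_0^c}\|_1 \le 3\|\boldsymbol{u}_{\mathcal{S}_0}\|_1\}$, standard restricted-isometry-type concentration for Gaussian designs (e.g., Theorem~1 of Rudelson--Zhou style arguments) shows that $\hat{\boldsymbol{\delta}}^\top \mathbf{\Sigma}^{(i)} \hat{\boldsymbol{\delta}}$ and $\tfrac{1}{N}\|\mathbf{X}^{(i)}\hat{\boldsymbol{\delta}}\|_2^2$ are equivalent up to constants, provided $N \gtrsim s_0 \log p$. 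Invoking this equivalence yields $d(\hat{\boldsymbol{\theta}}^{(i)},\boldsymbol{\theta}^{(i)})^2 \lesssim s_0 \log p / N$, which gives the claimed rate $O(\sqrt{s_0 \log p / N})$ after a square root. The whole argument holds with probability at least $1 - O(1/N)$ via a final union bound over the noise event, the $\ell_\infty$ bound, and the restricted isometry event.
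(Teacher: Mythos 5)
Your proposal is correct and is essentially the fully worked-out version of what the paper handles by a one-line citation: the paper's proof of this proposition reads, in its entirety, ``By Section 2.4 of \cite{buhlmann2011statistics}.'' Your first three steps (basic inequality, $\ell_\infty$ control of $(\mathbf{X}^{(i)})^\top\boldsymbol{\varepsilon}^{(i)}$ under $\lambda \asymp \sigma\sqrt{\log p/N}$, cone condition plus the restricted eigenvalue condition of Assumption~\ref{aspt:lasso_prediction_error}) are exactly the standard argument that citation points to, so there is no methodological divergence there.

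Where you add genuine value is the final step. The quantity $d(\hat{\boldsymbol{\theta}}^{(i)},\boldsymbol{\theta}^{(i)})^2 = \hat{\boldsymbol{\delta}}^\top\mathbf{\Sigma}^{(i)}\hat{\boldsymbol{\delta}}$ is a \emph{population} prediction error, whereas the oracle inequality in the cited Section 2.4 controls the \emph{empirical} seminorm $N^{-1}\|\mathbf{X}^{(i)}\hat{\boldsymbol{\delta}}\|_2^2$ (and the $\ell_1$ error). You correctly observe that passing between the two via $\lambda_1(\mathbf{\Sigma}^{(i)})$ is hopeless under Assumption~\ref{aspt:problem_parameters}(B), and that the right tool is a restricted-isometry-type equivalence of the two quadratic forms on the cone $\{\boldsymbol{u}:\|\boldsymbol{u}_{\mathcal{S}_0^c}\|_1\le 3\|\boldsymbol{u}_{\mathcal{S}_0}\|_1\}$ for Gaussian designs, valid when $N\gtrsim s_0\log p$. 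The paper's citation silently elides this step. The only caveat to flag is that this equivalence is not a consequence of the \emph{empirical} RE condition stated in Assumption~\ref{aspt:lasso_prediction_error}(1) alone; it additionally uses the Gaussianity of the design (available from the problem setup) and the sample-size condition $N\gtrsim s_0\log p$, which is implicit in the regime where the stated rate is meaningful but is not listed among the hypotheses. If you make that sample-size requirement explicit, your argument is complete and, if anything, more faithful to what the proposition actually asserts than the proof the paper provides.
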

\begin{proof}[Proof of Proposition~\ref{prop:lasso_prediction_error}]
    By Section 2.4 of \cite{buhlmann2011statistics}.
\end{proof}



\subsection{Theory of Support Estimation with Lasso}
\label{appendix:lasso_variable_selection}

In this section, we present a theoretical analysis of the Lasso estimator's ability to perform variable selection in high-dimensional linear regression models. We establish conditions under which the Lasso is able to provide a good estimate of the true support set of the parameter vector \(\boldsymbol{\theta}^{(i)}\). The analysis is based on classical assumptions and leverages key results from the literature.
\begin{assumption}\label{aspt:lasso_support}
    $\theta_{\min}^{(i)} = \min_{j\in \mathcal{S}_0^{(i)}}|\theta_j^{(i)}|$ satisfies $\theta_{\min}^{(i)} \geq C \sigma \sqrt{\frac{\log p}{N}}$
\end{assumption}
\begin{remark}
    Unlike our approach, the method proposed by \citet{li2022simple} cannot directly incorporate support estimation. Consequently, their framework cannot leverage Assumption~\ref{aspt:lasso_support} to achieve improved performance guarantees.
\end{remark}

\begin{prop}\label{prop:lasso_support}
    Under Assumptions~\ref{aspt:lasso_prediction_error} and ~\ref{aspt:lasso_support}, there exist a constant $C_1$, with probability at least $1-O(1/N)$, the support estimation by Lasso satisfies the following:
    \begin{align*}
        \mathcal{S}_0^{(i)} \subseteq \mathcal{S}_1^{(i)}, \text{ and } |\mathcal{S}_1^{(i)}| \leq C_1 |\mathcal{S}_0^{(i)}|
    \end{align*}
\end{prop}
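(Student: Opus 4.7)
The plan is to establish the two claims---support inclusion and cardinality control---separately, both as consequences of the Lasso KKT optimality conditions combined with the prediction/estimation guarantee from Proposition~\ref{prop:lasso_prediction_error} and the beta-min lower bound in Assumption~\ref{aspt:lasso_support}. I would condition throughout on the high-probability event $\{\|(\mathbf{X}^{(i)})^\top \boldsymbol{\varepsilon}^{(i)}\|_\infty / N \leq \lambda/2\}$, which holds with probability at least $1 - O(1/N)$ under the sub-Gaussian noise assumption and the stated scaling $\lambda \asymp \sigma \sqrt{\log p / N}$; this event is precisely what drives the deterministic analysis that follows.

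For the inclusion $\mathcal{S}_0^{(i)} \subseteq \mathcal{S}_1^{(i)}$, my argument is contrapositive. Revisiting the KKT subgradient characterization of $\hat{\boldsymbol{\theta}}^{(i)}_{\text{Lasso}}$ and combining it with the restricted eigenvalue condition in Assumption~\ref{aspt:lasso_prediction_error} would yield, beyond the $\ell_2$-bound already packaged in Proposition~\ref{prop:lasso_prediction_error}, a coordinatewise $\ell_\infty$ control of order $\sigma\sqrt{\log p / N}$. If some $j \in \mathcal{S}_0^{(i)}$ had $\hat{\theta}_j^{(i)} = 0$, then the coordinatewise error at $j$ would be at least $|\theta_j^{(i)}| \geq \theta_{\min}^{(i)}$; choosing the absolute constant in Assumption~\ref{aspt:lasso_support} strictly larger than the Lasso $\ell_\infty$-error constant produces a contradiction, ruling out this case simultaneously for all $j \in \mathcal{S}_0^{(i)}$ by a union bound over at most $s_0$ indices.

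For the cardinality bound $|\mathcal{S}_1^{(i)}| \leq C_1 |\mathcal{S}_0^{(i)}|$, the plan is to exploit stationarity at active coordinates, namely $|(\mathbf{X}^{(i)}_j)^\top(\boldsymbol{y}^{(i)} - \mathbf{X}^{(i)}\hat{\boldsymbol{\theta}}^{(i)})|/N = \lambda$ for every $j \in \mathcal{S}_1^{(i)}$. Squaring and summing over $\mathcal{S}_1^{(i)}$, then applying a restricted-eigenvalue/compatibility inequality to the residual, gives $\lambda^2 |\mathcal{S}_1^{(i)}| \lesssim \|\mathbf{X}^{(i)}(\hat{\boldsymbol{\theta}}^{(i)} - \boldsymbol{\theta}^{(i)})\|_2^2/N + \|(\mathbf{X}^{(i)})^\top \boldsymbol{\varepsilon}^{(i)}\|_\infty^2 |\mathcal{S}_1^{(i)}|/N^2$, which on the good event reduces to $\lambda^2 |\mathcal{S}_1^{(i)}| \lesssim \lambda^2 s_0$ via the prediction bound from Proposition~\ref{prop:lasso_prediction_error}. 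This is the standard sparsity oracle inequality (in the spirit of Bickel--Ritov--Tsybakov, or B\"uhlmann--van~de~Geer Theorem~7.2) and yields the desired constant $C_1$.

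The main obstacle is the scale mismatch between the beta-min threshold $\sigma\sqrt{\log p / N}$ in Assumption~\ref{aspt:lasso_support} and the $\ell_2$-level estimation error $\sqrt{s_0 \log p / N}$ implied by Proposition~\ref{prop:lasso_prediction_error}: the inclusion step cannot be closed by a naive $\ell_2$-to-$\ell_\infty$ embedding but instead requires the coordinatewise KKT argument sketched above. If the restricted eigenvalue condition alone turns out to be too weak to deliver the $\ell_\infty$-rate $\sigma\sqrt{\log p / N}$, the cleanest remedy is to strengthen Assumption~\ref{aspt:lasso_prediction_error} to include a mutual-incoherence/irrepresentability-type condition on $\mathbf{X}^{(i)}$, under which the $\ell_\infty$-rate is classical; this is the technical step I would spend the most care on when writing out the full argument.
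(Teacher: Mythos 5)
The paper does not actually write out a proof here: it disposes of the proposition with a one-line citation to Section~2.4 of B\"uhlmann and van de Geer, so your detailed KKT-based reconstruction is necessarily a different (and far more explicit) route. Your two-part plan is the standard textbook argument and is essentially what the cited reference contains: screening via a beta-min condition against the Lasso estimation error, and cardinality control via the stationarity conditions on the active set combined with a sparsity oracle inequality. On that level the proposal is sound.

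Your flagged obstacle is a genuine issue with the paper's stated hypotheses rather than with your argument, and you diagnose it correctly. Under the restricted eigenvalue condition of Assumption~\ref{aspt:lasso_prediction_error} alone, the available coordinatewise control is only what falls out of the $\ell_2$ bound $\|\hat{\boldsymbol{\theta}}^{(i)}-\boldsymbol{\theta}^{(i)}\|_2 \lesssim \sqrt{s_0\log p/N}$, so screening requires $\theta_{\min}^{(i)} \gtrsim \sqrt{s_0\log p/N}$; the paper's Assumption~\ref{aspt:lasso_support} omits the $\sqrt{s_0}$ factor, which is only sufficient if one additionally imposes an irrepresentability or mutual-incoherence condition yielding the $\ell_\infty$ rate $\sigma\sqrt{\log p/N}$ (the primal--dual witness regime). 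Your proposed remedy---either strengthen the beta-min threshold by $\sqrt{s_0}$ or add incoherence---is exactly right. Similarly, for the bound $|\mathcal{S}_1^{(i)}|\leq C_1|\mathcal{S}_0^{(i)}|$, the step where you square and sum the KKT equalities over the active set needs an \emph{upper} sparse-eigenvalue (maximal restricted eigenvalue) condition on the Gram matrix, which is also not part of Assumption~\ref{aspt:lasso_prediction_error}; the lower RE bound alone does not close that inequality. Neither gap is fatal to the paper's program, since the authors explicitly remark that these Lasso assumptions are interchangeable scaffolding, but your write-up should state the additional conditions rather than leaving them implicit, and in that respect your proposal is more careful than the source.
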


\begin{proof}[Proof of Proposition~\ref{prop:lasso_support}]
    In Section 2.4 of \cite{buhlmann2011statistics}.
\end{proof}



\subsection{Theory for RDL Prediction Error}\label{appendix:RDL_prediction_error} 

Fix a covariance matrix $\mathbf{\Sigma}^{(i)}$ with eigenvalues $\left\{\lambda_k^{(i)}\right\}_{k \in[p]}$. We create two sequences called effective bias/variance denoted as $B_{N, T}^{(i)}$ and $V_{N, T}^{(i)}$, based on a budget of $T$ and the number of samples $N$ used for estimation.

$$
B_{N, T}^{(i)}:=\lambda_{k^*}^{(i)}, \text { and } V_{N, T}^{(i)}:=\left(\frac{k^*}{N}+\frac{N}{R_{k^*}\left(\Sigma^{(i)}\right)}\right)
$$

For $k \in[p]$, we define an empirical submatrix as $\mathbf{X}_{k+1: p}^{(i)} \in \mathbb{R}^{N \times(p-k)}$ as the $p-k$ columns to the right of $\mathbf{X}^{(i)}$, and define a Gram sub-matrix $A_k^{(i)}=\mathbf{X}_{k+1: p}^{(i)}\left(\mathbf{X}_{k+1: p}^{(i)}\right)^{\top} \in \mathbb{R}^{N \times N}$.
 
\begin{assumption}\label{aspt:rdl_prediction_error}
    There exist $c_U>1$ such that $k_N^*<N / c_U$ and a conditional number of $A_k^{(i)}$ is positive with probability at least $1-c_U e^{-N / c_U}$, 
\end{assumption}
\begin{remark}
   This standard assumption, following \citet{komiyama2024high}, is only used to derive the prediction error bound for the RDL estimator in Proposition~\ref{prop:rdl_prediction_error}. While this bound is subsequently utilized in Propositions~\ref{prop:sparse_eigen_a}, \ref{prop:sparse_eigen_b}, and \ref{prop:mixed}, we emphasize that the assumption itself is not invoked anywhere else in these proofs or in our theoretical framework.
\end{remark}
\begin{prop}\label{prop:rdl_prediction_error}
    Under Assumptions~\ref{aspt:rdl_prediction_error}, we have 
    \begin{equation} \label{eq:rdl_error_3}
    d(\hat{\boldsymbol{\theta}}^{(i)}_{\text{RDL}}, \boldsymbol{\theta}^{(i)}) \leq C_U\left(B_{N, T}^{(i)}+V_{N, T}^{(i)}\right),
    \end{equation}
with some constant $C_U>0$ and probability at least $1-2 c_U e^{-N / c_U}$.



\end{prop}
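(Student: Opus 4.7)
The plan is to follow the benign-overfitting framework of \citet{bartlett2020benign} as specialized to the bandit context setting by \citet{komiyama2024high}, and to verify that the bounds derived there apply verbatim to our per-arm estimator $\hat{\boldsymbol{\theta}}^{(i)}_{\text{RDL}}$. I would start from the closed form $\hat{\boldsymbol{\theta}}^{(i)}_{\text{RDL}} = (\mathbf{X}^{(i)})^\top (\mathbf{X}^{(i)} (\mathbf{X}^{(i)})^\top)^{-1} \boldsymbol{y}^{(i)}$ and the data-generating equation $\boldsymbol{y}^{(i)} = \mathbf{X}^{(i)}\boldsymbol{\theta}^{(i)} + \boldsymbol{\varepsilon}^{(i)}$. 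Letting $\mathbf{P}^{(i)} := (\mathbf{X}^{(i)})^\top(\mathbf{X}^{(i)}(\mathbf{X}^{(i)})^\top)^{-1}\mathbf{X}^{(i)}$ denote the row-space projector, one obtains the error decomposition
\[
\hat{\boldsymbol{\theta}}^{(i)}_{\text{RDL}} - \boldsymbol{\theta}^{(i)} = -(\mathbf{I}_p - \mathbf{P}^{(i)})\boldsymbol{\theta}^{(i)} + (\mathbf{X}^{(i)})^\top(\mathbf{X}^{(i)}(\mathbf{X}^{(i)})^\top)^{-1}\boldsymbol{\varepsilon}^{(i)}.
\]
Plugging this into the $\boldsymbol{\Sigma}^{(i)}$-weighted norm defining $d(\cdot,\cdot)$ and exploiting the independence of $\boldsymbol{\varepsilon}^{(i)}$ from $\mathbf{X}^{(i)}$ splits the target into a bias contribution involving $(\mathbf{I}-\mathbf{P}^{(i)})\boldsymbol{\theta}^{(i)}$ and a variance contribution expressible through traces of a matrix built from $\mathbf{X}^{(i)}$.

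Next, I would partition the eigenbasis of $\boldsymbol{\Sigma}^{(i)}$ at the cutoff $k^* = k_N^*$ defining $B_{N,T}^{(i)}$ and $V_{N,T}^{(i)}$, writing $\mathbf{X}^{(i)}(\mathbf{X}^{(i)})^\top$ as the sum of a head piece (top $k^*$ directions) and the tail Gram matrix $A_{k^*}^{(i)} = \mathbf{X}^{(i)}_{k^*+1:p}(\mathbf{X}^{(i)}_{k^*+1:p})^\top$. The key mechanism of benign overfitting is that, under Assumption~\ref{aspt:rdl_prediction_error}, the many tail directions with small eigenvalues summing to the effective rank $R_{k^*}(\boldsymbol{\Sigma}^{(i)})$ dominate $\mathbf{X}^{(i)}(\mathbf{X}^{(i)})^\top$ and act as an implicit ridge of order $\mathrm{tr}(\boldsymbol{\Sigma}_{k^*+1:p}^{(i)})/N$. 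Combined with the well-conditioning of $A_{k^*}^{(i)}$ guaranteed with probability at least $1 - c_U e^{-N/c_U}$, this lets me invoke a Sherman--Morrison--Woodbury-style expansion of $(\mathbf{X}^{(i)}(\mathbf{X}^{(i)})^\top)^{-1}$ that isolates the head directions and shows that projection onto $\mathrm{Range}((\mathbf{X}^{(i)})^\top)$ reproduces head components of $\boldsymbol{\theta}^{(i)}$ up to factors that vanish with the condition number of $A_{k^*}^{(i)}$.

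For the bias, this expansion gives $\|(\mathbf{I}-\mathbf{P}^{(i)})\boldsymbol{\theta}^{(i)}\|_{\boldsymbol{\Sigma}^{(i)}}^2 \lesssim \lambda_{k^*}^{(i)} \|\boldsymbol{\theta}^{(i)}\|_2^2$, which matches $B_{N,T}^{(i)}$ up to the bound on $\|\boldsymbol{\theta}^{(i)}\|_2$ from Assumption~\ref{aspt:problem_parameters}. For the variance, a direct trace computation using the eigenvalue bounds on $A_{k^*}^{(i)}$ reduces the noise contribution to a sum of $\sigma^2 k^*/N$ (from head directions) and $\sigma^2 N/R_{k^*}(\boldsymbol{\Sigma}^{(i)})$ (from tail directions that the implicit regularization damps), matching $V_{N,T}^{(i)}$. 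Combining both parts under one failure event (the condition-number control of $A_{k^*}^{(i)}$) and absorbing universal constants yields the stated bound with probability at least $1 - 2 c_U e^{-N/c_U}$.

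The main obstacle will be the high-probability eigenvalue control of the random tail Gram matrix $A_{k^*}^{(i)}$ under heavy eigenvalue decay, where sharp matrix concentration à la Koltchinskii--Lounici is required to rule out the near-singular directions that could blow up $(A_{k^*}^{(i)})^{-1}$. Assumption~\ref{aspt:rdl_prediction_error} is crafted precisely to absorb this difficulty by postulating the needed condition-number bound directly, after which the remaining work is a careful bookkeeping exercise that parallels Lemmas~9--11 of \citet{komiyama2024high}; the per-arm form of our setting requires no new machinery beyond restricting their argument to a single arm $i$ and applying it to the i.i.d. exploration data $\{\boldsymbol{x}^{(i)}_\tau, y^{(i)}_\tau\}_{\tau \in [N]}$.
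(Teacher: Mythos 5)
Your proposal is correct and follows exactly the machinery the paper relies on: the paper's entire proof of this proposition is a one-line citation to Theorem~2 of \citet{komiyama2024high}, which is precisely the benign-overfitting bias--variance analysis (error decomposition via the row-space projector, eigenbasis split at $k^*$, and condition-number control of the tail Gram matrix $A_{k^*}^{(i)}$) that your sketch reconstructs. The only cosmetic discrepancy is a squared-versus-unsquared bookkeeping of the bias and variance terms relative to the stated bound on $d(\cdot,\cdot)$, an imprecision the paper itself inherits from how it transcribes the cited theorem.
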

\begin{proof}[Proof of Proposition~\ref{prop:rdl_prediction_error}]
    By Theorem 2 in \citet{komiyama2024high}
\end{proof}

\section{Broader Impacts}

This work introduces a novel approach, HOPE, to high-dimensional linear contextual bandit problems, which adapts to both sparse model parameters and sparse eigenvalues of context covariance matrices. This advance addresses significant challenges in high-dimensional bandit problems, by offering a more flexible and generalizable method compared to existing techniques. We do not foresee major negative societal impacts, as the work primarily focuses on advancing theoretical methods in the field. 

\section{Limitation and Future Work}
While the proposed HOPE algorithm, built upon PWE and ETC frameworks, demonstrates promising results across various scenarios, particularly achieving sublinear regret for the first time in mixed scenarios, there remain several promising avenues for future research.

\begin{itemize}[left=0pt]
    \item \textit{Linearity Assumption}: Our theoretical guarantees and empirical results are restricted to linear high-dimensional settings. However, real-world reward structures often exhibit nonlinear patterns. Extending pointwise estimation to nonlinear models (e.g., kernel methods or neural networks) could expand the applicability of HOPE. Developing corresponding regret analyses in these settings is a critical and challenging next step.

   \item \textit{Exploration Strategy}: HOPE currently employs an Explore-Then-Commit (ETC) strategy, which, while effective, may not fully exploit the advantages of adaptive exploration. Integrating pointwise estimation with adaptive methods such as UCB or Thompson Sampling could improve learning efficiency. Notably, the confidence intervals derived from pointwise estimation may naturally align with these adaptive strategies to yield tighter regret bounds.

   \item \textit{Broader Reinforcement Learning Applications}: The effectiveness of pointwise estimation in contextual bandits suggests potential for broader use in reinforcement learning (RL), particularly in contextual MDPs where partial feedback and high-dimensional state representations are common. Extending HOPE to RL domains could bridge insights between bandit theory and sequential decision-making under uncertainty.
\end{itemize}

\newpage

\end{document}